\theoremstyle{plain}
\newtheorem{thm}{Theorem}[section]
\newtheorem{lem}[thm]{Lemma}
\newtheorem{cor}[thm]{Corollary}
\newtheorem{prop}[thm]{Proposition}
\theoremstyle{definition}
\newtheorem{defn}[thm]{Definition}
\theoremstyle{remark}
\newtheorem{rem}{Remark}
 \newcommand\R{\mathbb{R}}
 \newcommand\E{\mathbb{E}}
 \DeclareMathOperator*{\var}{Var}
 \DeclareMathOperator*{\cov}{Cov}
 \DeclareMathOperator*{\esssup}{ess\;sup}
 \newcommand\tab[1][0.70cm]{\hspace*{#1}}
\begin{document}

\articletype{RESEARCH ARTICLE}

\title{On Besov regularity and local time of the stochastic heat equation}

\author{
\name{Brahim Boufoussi\textsuperscript{a}\thanks{CONTACT Brahim Boufoussi Email: boufoussi@uca.ac.ma} and Yassine Nachit\textsuperscript{b}}
\affil{\textsuperscript{a,b}Department of Mathematics, Faculty of Sciences Semlalia, Cadi Ayyad University, 2390 Marrakesh, Morocco}
}

\maketitle

\begin{abstract}
Sharp Besov regularities in time and space variables are investigated for $\left(u(t,x),\; t\in [0,T],\; x\in \R\right)$, the mild solution to the stochastic heat equation driven by space-time white noise. Existence, H\"{o}lder continuity, and Besov regularity of local times are established for $u(t,x)$ viewed either as a process in the space variable or time variable. Hausdorff dimensions of their corresponding level sets are also obtained.
\end{abstract}

\begin{keywords}
Stochastic heat equation; White noise; Besov-Orlicz spaces; Schauder functions; Haar basis; Local times; H\"{o}lder continuity; Hausdorff  dimension
\end{keywords}

\begin{amscode}
60G15; 60G17; 60H05; 60H15
\end{amscode}

\section*{Introduction}
Stochastic partial differential equations (SPDEs) model various random phenomena in physics, finance, fluid mechanics,  among others, see, e.g. \cite{Kou, Denk, Carmona}.
They have been widely investigated by different approaches in the last three decades. Let us mention the analytic method \cite{Krylov 1977, Krylov 1982a, Krylov 1982b}, the semigroup point of view \cite{Da Prato}, and the probabilistic setting using the theory of martingale measures \cite{Walsh}. The particular case of stochastic heat equations has been intensively studied from many perspectives: See, e.g., \cite{BalanTudor1, SoleSara1, SoleSara, DalangSole, ArayaTudor, XiaYan} for regularity investigations and \cite{ChenDalang, CONUS, Mueller} for many other studies.
The regularity in Besov spaces of SPDEs has received much attention in the past decade. Among other things, such studies are closely related to the theme of adaptive numerical wavelet methods. For more details on this subject, we refer to \cite{Cioica1, Cioica2, Cioica3}.
\par
In this paper, we consider the following linear stochastic heat equation
\begin{equation}\label{EQ0}
    \begin{split}
       \frac{\partial u}{\partial t}(t,x) &= \frac{1}{2}\Delta u(t,x)+\frac{\partial^2}{\partial t\partial x}W(t,x),\quad t>0,\;x\in \R, \\
         u(0,x) &= 0,\qquad x\in \R,
    \end{split}
\end{equation}
where $\frac{\partial^2}{\partial t\partial x}W(t,x)$ is a space-time white noise. In Section \ref{Linear stochastic heat equation}, following Walsh's random field approach \cite{Walsh}, we will briefly give a rigorous formulation of the formal equation \eqref{EQ0}. The purpose of this paper is to investigate the Besov regularity for the process $(u(t,x),\;t\in[0,T],\;x\in [a,b])$ as well as its local time, respectively in the space variable $x$ (for fixed $t$) and in the time variable $t$ (for fixed $x$), with $T>0$ and $a<b$ are arbitrary real values.\par
Besov spaces, involving a bounded or unbounded interval $I\subseteq \R$, usually noted in the literature  by
$ \mathcal{B}^{\alpha}_{p, q}(I) $ with $ 0<\alpha<1 $, $ 1\leq p, q\leq +\infty $, are a set of functions of $L^p(I)$ having a smoothness of order $\alpha$.
They cover some classical function spaces as special cases. Namely,  $\mathcal{B}^{\alpha}_{p, p}(\R) $ coincides with the classical Sobolev space $W^{\alpha, p}(\R) $ and
$\mathcal{B}^{\alpha}_{\infty, \infty}(I) $ is the classical $\alpha$-H\"{o}lder space
${\mathcal{H}}^{\alpha}(I)$. When the Orlicz norm is used in place of the $L^p$-norm, we obtain the well-known Besov-Orlicz spaces. \\
In this paper, we are essentially concerned by  the class of Besov spaces $ \mathcal{B}^{\alpha}_{p, \infty}([0,1]) $ and also by Besov-Orlicz spaces  $ \mathcal{B}^{\alpha}_{{\mathcal{N}}, \infty}([0,1]) $, $\mathcal N$ is  the Young function $\mathcal{N}(x)= e^{x^2}-1$.  We will note these spaces respectively by $ \mathcal{B}^{\alpha}_{p}$ and $ \mathcal{B}^{\alpha}_{\mathcal N}$. In this case, for $\alpha p>1$ and for any
$\varepsilon>0$, we have the following continuous injections:
\begin{equation}\label{continj}
{\mathcal{H}}^{\alpha+\varepsilon}\hookrightarrow
 \mathcal{B}^{\alpha, 0}_{p} \hookrightarrow \mathcal{B}^{\alpha}_{p} \hookrightarrow \mathcal{H}^{\alpha -1/p},
 \end{equation}
where $ \mathcal{B}^{\alpha, 0}_{p}  $ is a separable subspace of
$\mathcal{B}^{\alpha}_{p}$. Furthermore, it is important to recall that the Besov-Orlicz space $ \mathcal{B}^{\alpha}_{{\mathcal{N}}} $ is continuously embedded in $\mathcal{B}^{\alpha}_{p}$ for any $p\geq1$. See section \ref{BesovOrliczSection} for more details and \cite{Tribel} for an introduction to Besov spaces.\\
Our first main result is to prove that, for any $p\geq4 $ we have: \par\noindent
i) For any fixed space variable $x$,
$$\mathbb{P}\left[(u(t, x))_{t\in[0,1]}\in \mathcal{B}^{1/4}_{p}\right]=1\,\,\,\text{and}\,\,\,
\mathbb{P}\left[(u(t, x))_{t\in[0,1]}\in \mathcal{B}^{1/4, 0}_{p}\right]=0 .$$
ii) For any fixed time variable $t$,
$$\mathbb{P}\left[(u(t, x))_{x\in[0,1]}\in \mathcal{B}^{1/2}_{p}\right]=1\,\,\,\text{and}\,\,\,
\mathbb{P}\left[(u(t, x))_{x\in[0,1]}\in \mathcal{B}^{1/2, 0}_{p}\right]=0 .$$
In fact, we even get a better result showing that i) and ii) are true respectively in the  Besov-Orlicz space $ \mathcal{B}^{1/4}_{\mathcal N}$ and $ \mathcal{B}^{1/2}_{\mathcal N}$.
We should point out here that, by a suitable affine  change of variables, the interval $[0,1]$ may be replaced in i) (resp. ii)) by any arbitrary compact interval $[0,T]$ (resp. $[a,b]$). \noindent\par
On the other hand,  injections (\ref{continj}) show that the obtained regularity results are the best one can get in the scale of Besov spaces. They improve the classical H\"{o}lder regularities of $u(t,x)$; namely, the process $u(t,x)$ satisfies a.s. $\alpha$- H\"{o}lder condition with $\alpha<\frac{1}{4}$ (resp. $\alpha<\frac{1}{2}$) in the time (resp. space) variable, see e.g. \cite{DalangSole}, \cite{SoleSara} and references therein.\par\noindent
As far as we know, only the spatial Besov regularity of $u(t,x)$
has been partially investigated firstly in Deaconu’s thesis \cite{Deaconu}, where the author has also claimed the temporal regularity as in i) but she failed to provide a proof for her finding.  Our approach is certainly much more technical than that of \cite{Deaconu}, but it allowed us to obtain sharp results. Our proof is based on the characterization of Besov spaces in terms of sequences spaces (Theorem \ref{Besov}). We use essentially the same arguments, with some adjustments, like those used by  Ciesielski, Kerkyacharian, and Roynette in \cite{Roynette}, where the authors have investigated Besov regularity for a large class of Gaussian processes. Recently, a more direct method, which uses the usual modulus-of-continuity definition of the Besov norms, has been employed in \cite{Verrar1} to prove the temporal regularity in the Besov-Orlicz space for solutions to parabolic stochastic differential equations.
Many other works investigating different problems have been published using the sequential characterization of Besov topology, we refer e.g., to
this non-exhaustive list \cite{RoynetteMB}, \cite{Roynette}, \cite{BLD}, \cite{Boufoussi}.
\noindent\par

Our second aims are to investigate existence,  H\"{o}lder  continuity, and Besov regularity of local times of
$ u(t,x)$, viewed as a process respectively in time and space variables.
We will use the notion of local nondeterminism (LND), initiated by Berman in  \cite{Berman2} and extended later  in \cite{Cuzick} to the strong local nondeterminism concept (SLND). These two notions are the most important mathematical tools used by several authors to study  sample path properties for various Gaussian processes and random fields, see, e.g. \cite{Xiao1, Xiao2, Xiao3, Xiao4, Xiao5, AyaWuXiao, BDG} and references therein.\noindent\par
We must note  that, for fixed $x$, the process  $(u(t,x), t\in[0,T])$ is identically distributed as the so-called bifractional Brownian motion (see \cite{Sw}). So, many sample path properties of the process $(u(t,x), t\in[0,T]) $, including the H\"{o}lder continuity of its local time, may be deduced from \cite{TudorXiao}. In this  last paper, to prove  that the bifractional Brownian motion satisfies the SLND property, the authors have used the Lamperti transformation to connect self-similar and stationary Gaussian processes.
In our paper, to verify the LND condition for the processes $(u(t,x), t\in[0, T], x\in\R) $ respectively in time and space variables, we use fine estimates on the Green kernel $G$ and careful calculations.
Furthermore, we will improve the Knowledge of the sample paths properties for these two processes by proving regularity results of their local times in the modular Besov spaces $\mathcal{B}^{\omega}_{p}$, with respect to modulus $\omega(t)=t^{\alpha}(\log{1/t})^{\lambda} $, with $\alpha=1/4$ (resp. $1/2$) and $p\lambda>1$.
In contrast with the Besov regularity obtained for the Brownian local time in \cite{BoufoussiR}, our results are not optimal, but they have the merit of being new and sharper than what is known. \\

Let us give an overview of the results existing in the literature related to the context of our paper.
First, it was raised in \cite{Xiao4} the relevance of studying the sample paths properties for the solutions to stochastic heat or wave equations through their local times and LND concept.
Ouahhabi and Tudor \cite{Ouahhabi} have studied, for fixed space variable, the H\"{o}lder regularity of the local time of the solution to the $d$-dimensional
stochastic linear heat equation driven by a fractional-white noise, with Hurst parameter $H>\frac{1}{2}$. The investigation of the sample paths properties, both in time and space variables, of the solution to the stochastic heat equation driven by a fractional-colored  noise is considered in \cite{Tudor} and  \cite{Xiao5}.

While writing this article, we discovered the papers \cite{Verrar2, Verrar3, Verrar1} where a direct method was used to handle Besov norms. They gave alternative proofs of Besov regularities of Brownian motion and fractional Brownian motion, without the equivalent wavelet description used in \cite{Ciesielski} and \cite{Roynette}.
By this new approach, we expect that we will establish sharp Besov regularities for the solution and its local time to a d-dimensional stochastic heat (or wave) equation driven by a fractional-colored noise. We intend to study these questions in a future paper.

This paper is organized as follows: In the first paragraph, we briefly recall some notions on  local times, Besov and Besov-Orlicz spaces. The second paragraph is devoted to studying  the Besov-Orlicz regularity for the sample paths of $t\mapsto u(t,x)$ and $x\mapsto u(t,x)$. In the third paragraph, we investigate the existence and Besov regularity of local times of the process $u(t,x)$ in time and space variables and conclude by considering Hausdorff dimensions of the level sets $\{t\in [0,T];\;u(t,x)=u(t_0,x)\}$ and
$\{x\in [a,b];\;u(t,x)=u(t, x_0)\}$.

\section{Preliminaries}
In this paragraph, we give some basic notions on local times and Besov spaces.
\subsection{The local times}
We recall here the concept of the local time, in the Fourier analytic sense, as it has been initiated by S. Berman in \cite{Berman1}.\\
Let $I\subset\R$ be a compact interval, and  $\theta: s\in I\to \theta_s\in \R$, a deterministic Borel function. $\mathcal{B}(I)$ be the Borel $\sigma$-algebra on $I$, for any $B\in \mathcal{B}(I)$  we define the occupation measure $\mu_B$ by
$$\mu_B(A)=\lambda\{s\in B,\; \theta_s\in A\},\quad A\in \mathcal{B}(\R).$$
If $\mu_B$ is absolutely continuous with respect to the Lebesgue measure on $\R$, we say that $\theta$ has a local
time on $B$, and we define its local time $L(.,B)$  as the Radon Nikodym derivative of $\mu_B$ with respect to the Lebesgue measure. If $B=[0,t]$ (resp. $B=I$) we write $L(\xi,t)$ (resp. $L(\xi)$) instead of $L(.,[0,t])$ (resp. $L(\xi,I)$). This definition can be extended to any measurable and bounded (or positive) function $f$ to get the so-called occupation density formula:
$$\int_{0}^{t}f(\theta_s)ds=\int_{\R}f(\xi)L(\xi,t)d\xi.$$
The idea of Berman is to relate properties of $L(.,B)$ with the integrability of the Fourier
transform of $\theta$. Recall the following essential result:
\begin{prop}
  The function $\theta$ has a square integrable local time $L(\xi,B)$ iff
  $$ \int_{\R}\left|\int_{B}\exp(iu\, \theta_s)ds\right|^2du<\infty.$$
  Moreover, we have the following representation of the local time, for almost every $\xi \in \R$,
  \begin{equation}\label{local time representation as inversion formula}
  L(\xi,B)=\frac{1}{2\pi}\int_{\R}\int_{B}\exp(iu(\theta_s-\xi))dsdu.
\end{equation}
\end{prop}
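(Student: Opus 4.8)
The plan is to recognize the quantity $\int_B\exp(iu\,\theta_s)\,ds$ as the Fourier transform of the occupation measure $\mu_B$ and then to invoke a Plancherel-type criterion for the absolute continuity of a finite measure. First I would record that, by the very definition of $\mu_B$ together with the occupation density formula applied to the bounded function $\xi\mapsto e^{iu\xi}$, one has
$$\widehat{\mu_B}(u):=\int_{\R}e^{iu\xi}\,\mu_B(d\xi)=\int_B\exp(iu\,\theta_s)\,ds,$$
so that the hypothesis $\int_{\R}\bigl|\int_B\exp(iu\,\theta_s)\,ds\bigr|^2\,du<\infty$ is exactly the statement $\widehat{\mu_B}\in L^2(\R)$. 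The proposition thus reduces to the equivalence: a finite positive measure is absolutely continuous with a square-integrable density if and only if its Fourier transform lies in $L^2(\R)$.

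For the ``only if'' direction, if $\theta$ admits a square-integrable local time then $\mu_B(d\xi)=L(\xi,B)\,d\xi$ with $L(\cdot,B)\in L^2(\R)$, and Plancherel's theorem immediately gives $\widehat{\mu_B}=\widehat{L(\cdot,B)}\in L^2(\R)$, which is the desired integrability. For the ``if'' direction, I would mollify $\mu_B$ by the Gaussian kernel $p_\varepsilon(\xi)=(2\pi\varepsilon)^{-1/2}e^{-\xi^2/2\varepsilon}$, whose Fourier transform is $e^{-\varepsilon u^2/2}$. Setting $f_\varepsilon:=\mu_B*p_\varepsilon$, which is a genuine smooth function equal to $\int_B p_\varepsilon(\xi-\theta_s)\,ds$, one has $\widehat{f_\varepsilon}(u)=\widehat{\mu_B}(u)\,e^{-\varepsilon u^2/2}$; hence by Plancherel
$$\|f_\varepsilon\|_{L^2}^2=\frac{1}{2\pi}\int_{\R}\bigl|\widehat{\mu_B}(u)\bigr|^2 e^{-\varepsilon u^2}\,du\leq\frac{1}{2\pi}\int_{\R}\bigl|\widehat{\mu_B}(u)\bigr|^2\,du<\infty.$$
Since $\widehat{f_\varepsilon}\to\widehat{\mu_B}$ in $L^2(\R)$ as $\varepsilon\downarrow0$, the family $(f_\varepsilon)$ is Cauchy, hence converges in $L^2(\R)$ to some $f$ with $\widehat f=\widehat{\mu_B}$. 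Testing against an arbitrary bounded continuous $\phi$ and using $\int\phi\,f_\varepsilon\,d\lambda=\int(\phi*p_\varepsilon)\,d\mu_B\to\int\phi\,d\mu_B$, one identifies $\mu_B=f\,d\lambda$; thus the local time exists, equals $f\in L^2(\R)$, and the relation $\widehat f=\widehat{\mu_B}$ yields by $L^2$-Fourier inversion the stated representation (\ref{local time representation as inversion formula}) for almost every $\xi$.

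The main obstacle is precisely this ``if'' direction: turning $L^2$-control of the Fourier transform into genuine absolute continuity of $\mu_B$ with an $L^2$ density. The delicate points are the uniform $L^2$-bound and the $L^2$-Cauchy property of the mollifications (which rest on the dominated/monotone convergence of $e^{-\varepsilon u^2}$ to $1$), together with the passage to the limit identifying $\mu_B$ with $f\,d\lambda$; this is exactly the content of the Plancherel theorem for measures, and some care is needed because $\widehat{\mu_B}$ need not be integrable, so that the inversion formula is valid only in the $L^2$ (equivalently, almost everywhere along a subsequence) sense rather than pointwise.
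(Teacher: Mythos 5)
Your argument is essentially correct, but note that the paper itself offers no proof of this proposition: it is stated as a recalled classical result of Berman (the harmonic-analytic criterion from \cite{Berman1}), so there is no internal proof to compare against. Your route is precisely the standard one behind Berman's statement, namely the Plancherel theorem for finite measures: the identity $\widehat{\mu_B}(u)=\int_B e^{iu\theta_s}\,ds$ (immediate since $\mu_B$ is the image of Lebesgue measure on $B$ under $\theta$), Plancherel for the ``only if'' direction, and Gaussian mollification plus an $L^2$-Cauchy argument for the ``if'' direction, concluding with $L^2$-Fourier inversion to get \eqref{local time representation as inversion formula} almost everywhere. One small repair is needed in the identification step: you test against an \emph{arbitrary bounded continuous} $\phi$, but $\int\phi f\,d\lambda$ is not a priori defined when $f$ is only known to lie in $L^2(\R)$. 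Either restrict to continuous compactly supported $\phi$ (for which $\phi\in L^2$, so $\int\phi f_\varepsilon\,d\lambda\to\int\phi f\,d\lambda$ follows from $L^2$-convergence, and such $\phi$ still separate finite measures), or first observe that $\|f_\varepsilon\|_{L^1}\leq\lambda(B)$ uniformly, so that Fatou's lemma along an a.e.-convergent subsequence gives $f\in L^1(\R)$; also record that $f\geq 0$ a.e.\ as an $L^2$-limit of the nonnegative functions $f_\varepsilon$, so that $f\,d\lambda$ is indeed a positive finite measure. With that adjustment your proof is complete and self-contained, which is arguably a useful addition given that the paper delegates the entire argument to the literature.
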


The deterministic function $\theta$ can be chosen to be a sample path of a stochastic process $(X_t,\;t\in [0,T])$. To prove almost sure existence and square integrability of the local time $L(\xi,B)$, it is enough to establish that
$$\E \int_{\R}\left|\int_{B}\exp(iuX_s)ds\right|^2du<\infty.$$
When $(X_t,\;t\in [0,T])$ is Gaussian, then we get the well-known Berman's criterion:
\begin{prop}[\cite{Berman1}]
  If $(X_t,\;t\in [0,T])$ is a centred Gaussian process, and satisfies
  $$\int_{0}^{T}\int_{0}^{T}\left[\E(X_s-X_t)^2\right]^{-1/2}dsdt<\infty.$$
  Then for almost all $\omega$, for any $B\in \mathcal{B}([0,T])$, the trajectory $s\mapsto X_s(\omega)$   has a local time $L(\xi, B, \omega)$ which is square integrable with respect to $\xi$.
\end{prop}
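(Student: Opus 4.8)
The plan is to reduce the assertion, via the preceding proposition, to the single quantitative estimate
\[
\E \int_{\R}\left|\int_{B}\exp(iuX_s)\,ds\right|^2 du<\infty,
\]
and to evaluate this expectation explicitly using the Gaussian structure. I would first treat the full interval $B=[0,T]$ and recover the statement for an arbitrary $B$ at the very end. Assuming, as is standard and harmless for a Gaussian process, that $(s,\omega)\mapsto X_s(\omega)$ has a jointly measurable version, I expand the square and interchange $\E$ with the integrals to get
\[
\E \int_{\R}\left|\int_{[0,T]}\exp(iuX_s)\,ds\right|^2 du
=\int_{0}^{T}\!\!\int_{0}^{T}\int_{\R}\E\!\left[\exp\bigl(iu(X_s-X_t)\bigr)\right]du\,ds\,dt.
\]
Since $X_s-X_t$ is centred Gaussian with variance $\sigma_{s,t}^2:=\E(X_s-X_t)^2$, its characteristic function is $\E[\exp(iu(X_s-X_t))]=\exp(-u^2\sigma_{s,t}^2/2)$, a real, nonnegative quantity; this both legitimises the Tonelli interchange above and reduces matters to a genuine nonnegative integral. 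Evaluating the inner Gaussian integral, $\int_{\R}\exp(-u^2\sigma_{s,t}^2/2)\,du=\sqrt{2\pi}\,\sigma_{s,t}^{-1}$, yields
\[
\E \int_{\R}\left|\int_{[0,T]}\exp(iuX_s)\,ds\right|^2 du
=\sqrt{2\pi}\int_{0}^{T}\!\!\int_{0}^{T}\bigl[\E(X_s-X_t)^2\bigr]^{-1/2}ds\,dt,
\]
which is finite precisely by the hypothesis. (The finiteness of the right-hand side also forces $\sigma_{s,t}>0$ for almost every $(s,t)$, so no degeneracy occurs on a set of positive measure.)

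Consequently $J:=\int_{\R}|\int_{[0,T]}\exp(iuX_s)\,ds|^2 du<\infty$ almost surely. On the full-probability event $\{J<\infty\}$, the first proposition, applied pathwise, shows that $s\mapsto X_s(\omega)$ possesses a square-integrable local time $L(\cdot):=L(\cdot,[0,T],\omega)$; equivalently the occupation measure satisfies $\mu_{[0,T]}=L(\cdot)\,d\lambda$ with $L\in L^2(\R)$. To upgrade this to all $B$ simultaneously on the same event, I would note that for every Borel $B\subseteq[0,T]$ and every $A\in\BR$ one has $\mu_B(A)=\lambda\{s\in B:X_s\in A\}\le \mu_{[0,T]}(A)$. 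Hence $\mu_B\ll\lambda$, and its Radon--Nikodym density $L(\cdot,B,\omega)$ satisfies $0\le L(\xi,B,\omega)\le L(\xi)$ for almost every $\xi$; as $L\in L^2(\R)$, so is $L(\cdot,B,\omega)$. This produces, on a single event of probability one, a square-integrable local time for every $B\in\mathcal{B}([0,T])$, as claimed.

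The conceptual obstacle is not the computation, which is routine once the Gaussian characteristic function is used, but the passage from a fixed $B$ to the simultaneous statement over all Borel $B$: the domination $\mu_B\le\mu_{[0,T]}$ is the clean device that sidesteps any uncountable-intersection issue. The remaining technical points to check carefully are the applicability of Fubini--Tonelli — which rests on selecting a jointly measurable version of the process and on the nonnegativity of $\exp(-u^2\sigma_{s,t}^2/2)$ — and the elementary fact that a measure dominated by an $L^2$-absolutely-continuous measure is itself absolutely continuous with $L^2$ density.
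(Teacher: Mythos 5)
Your proposal is correct and takes essentially the same route as the paper, which states the result as Berman's criterion after observing that it suffices to show $\E\int_{\R}\bigl|\int_{B}\exp(iuX_s)\,ds\bigr|^2\,du<\infty$: you simply fill in the Gaussian characteristic-function computation $\int_{\R}\exp(-u^2\sigma_{s,t}^2/2)\,du=\sqrt{2\pi}\,\sigma_{s,t}^{-1}$ and make the statement uniform over all Borel $B$ via the domination $\mu_B\leq\mu_{[0,T]}$. The only cosmetic point is that interchanging $\E$ with $ds\,dt$ at fixed $u$ is a Fubini step resting on boundedness of the complex exponential over a finite-measure domain (not Tonelli, since that integrand is not nonnegative), but the interchange is valid, so there is no gap.
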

\begin{rem}
   In the rest of this paper, we will note the local time of a process by $L(\xi, B)$ instead of $L(\xi, B, \omega)$ unless a confusion exists.
\end{rem}
The following theorem will clarify Berman’s principle, providing the link between the regularity of the local time and  the irregularity of its underlying stochastic process:
\begin{thm}[\cite{Berman1}]\label{thmberman}
Let $(X_t,\;t\in [0,T])$ be a centred Gaussian process, such that for some $p\geq 0$,
\begin{equation}\label{(p+1)/2}
  \int_{0}^{T}\int_{0}^{T}[\E(X_s-X_t)^2]^{-(p+1)/2}dsdt<\infty.
\end{equation}
Then for almost all $\omega$, the trajectory  $s\mapsto X_s(\omega)$ has a local time $L(\xi)$, such that the first $[p/2]$
derivatives of $L(\xi)$ exist and are square integrable ($[v]$ = integral part of v).
Moreover, when $[p] $ is even, the sample functions of $X$ are nowhere $(2/(p + 1))$-H\"{o}lder continuous.
\end{thm}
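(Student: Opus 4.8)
The plan is to exploit the Fourier-analytic representation \eqref{local time representation as inversion formula} of the local time, whose Fourier transform in the space variable $\xi$ is simply $\hat L(u,B)=\int_B e^{iuX_s}\,ds$. For the existence statement and the square-integrability of the derivatives I would work in the Sobolev scale. By Plancherel's identity, for any integer $k\ge 0$ and any subinterval $I\subseteq[0,T]$,
\begin{equation*}
\E\int_\R\Big|\frac{\partial^k}{\partial\xi^k}L(\xi,I)\Big|^2\,d\xi
=\frac{1}{2\pi}\int_\R |u|^{2k}\,\E\big|\hat L(u,I)\big|^2\,du
=\frac{1}{2\pi}\int_I\int_I\int_\R |u|^{2k}\,\E\,e^{iu(X_s-X_t)}\,du\,ds\,dt .
\end{equation*}
Since $X$ is centred Gaussian, $X_s-X_t$ is centred Gaussian with variance $\sigma_{st}^2:=\E(X_s-X_t)^2$, so $\E e^{iu(X_s-X_t)}=\exp(-\tfrac12 u^2\sigma_{st}^2)$. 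The inner $u$-integral is then a Gaussian moment equal to $C_k\,\sigma_{st}^{-(2k+1)}$, with $C_k=\int_\R|v|^{2k}e^{-v^2/2}\,dv$, after the substitution $u=v/\sigma_{st}$. Hence
\begin{equation*}
\E\int_\R\Big|\frac{\partial^k}{\partial\xi^k}L(\xi)\Big|^2\,d\xi
=\frac{C_k}{2\pi}\int_0^T\int_0^T\big[\E(X_s-X_t)^2\big]^{-(2k+1)/2}\,ds\,dt ,
\end{equation*}
which is finite precisely when $2k+1\le p+1$, i.e. when $k\le[p/2]$. Taking $k=0$ recovers the a.s. existence and square-integrability of $L(\xi)$, and taking $k=1,\dots,[p/2]$ shows $L(\cdot)\in H^{[p/2]}(\R)$ almost surely, so its first $[p/2]$ weak derivatives exist and are square-integrable.

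For the nowhere-H\"older statement I would argue by contradiction through a concentration property of the occupation measure. Suppose that on some subinterval $I$ of length $h$ the sample path satisfies $|X_s-X_t|\le K|s-t|^{\gamma}$ with $\gamma=2/(p+1)$. Then the oscillation of $X$ over $I$ is at most $Kh^{\gamma}$, so by the occupation density formula $L(\cdot,I)$ is supported in an interval of length $\le Kh^{\gamma}$ while carrying total mass $\int_\R L(\xi,I)\,d\xi=h$. Consequently $\sup_\xi L(\xi,I)\ge K^{-1}h^{1-\gamma}$. Writing $m=[p/2]$, the hypothesis that $[p]=2m$ is even is exactly what makes the critical exponent $(2m+1)/2=([p]+1)/2$ admissible in the integral above, so that $L(\cdot,I)$ has an $m$-th derivative in $L^2$; the pairing $1-\gamma=(2m-1)/(2m+1)$ against $1/\gamma=([p]+1)/2$ is the reason the critical H\"older exponent is precisely $2/(p+1)$.

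The crux is then to produce a matching upper bound showing that, almost surely and uniformly over all small intervals $I$, the quantity $\sup_\xi L(\xi,I)$ is strictly smaller than $h^{1-\gamma}$. I would derive this from the Sobolev embedding $H^m\hookrightarrow L^\infty$ applied on the support interval, combined with the moment estimates for $\|\partial_\xi^k L(\cdot,I)\|_{L^2}$ obtained above, tracking their dependence on $h$. The genuinely technical part is the uniformization: one restricts to a dyadic family of intervals, controls the local time and its derivatives via higher Gaussian moment estimates, and invokes a Borel--Cantelli argument so that the bound holds simultaneously for every sufficiently small interval. Balancing the resulting powers of $h$ against $h^{1-\gamma}$ at the critical exponent yields the contradiction, forcing the path to be nowhere $2/(p+1)$-H\"older continuous. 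I expect this uniform upper bound on the local time, rather than the comparatively clean Fourier computation for the derivatives, to be the main obstacle.
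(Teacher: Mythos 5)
First, a point of reference: the paper does not prove this theorem at all — it is quoted from Berman \cite{Berman1} and used as a black box — so your attempt can only be measured against the classical argument, not against anything in the text. Your first part essentially \emph{is} that classical argument and is correct, up to one presentational inversion: your opening identity presupposes that $L$ exists and has $k$ derivatives, which is what is to be proved. The rigorous order is reversed: one first bounds $\E\int_{\R}|u|^{2k}\bigl|\int_I e^{iuX_s}\,ds\bigr|^2\,du=C_k\int_I\int_I\bigl[\E(X_s-X_t)^2\bigr]^{-(2k+1)/2}\,ds\,dt$, observes that $\sigma^{-(2k+1)}\leq 1+\sigma^{-(p+1)}$ so the hypothesis makes this finite for every $k\leq[p/2]$ (your ``finite precisely when $2k+1\leq p+1$'' should be ``finite whenever''), and only then invokes Plancherel to conclude that the occupation measure has a density lying in $H^{[p/2]}(\R)$ almost surely.

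The genuine gap is in the second part, and it sits exactly where you predict ``the main obstacle''. The contradiction cannot come from an unconditional bound ``$\sup_\xi L(\xi,I)=o(h^{1-\gamma})$ uniformly over small $I$'' — no such bound holds in general (for Brownian motion $\sup_\xi L(\xi,I)\asymp h^{1/2}$). What the argument actually needs, after pairing your lower bound $\sup_\xi L(\xi,I)\geq h^{1-\gamma}/K$ with the scaled Sobolev inequality $\|f\|_\infty\leq C\,\delta^{\,m-1/2}\|f^{(m)}\|_{L^2}$ for $f\geq0$ supported in an interval of length $\delta\leq 2Kh^{\gamma}$, is the following unconditional statement: almost surely, $\|\partial_\xi^{m}L(\cdot,I)\|_{L^2}\to0$ as $|I|\to0$, uniformly in the location of $I$, at least along a sequence of scales. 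Indeed, at $\gamma=2/(p+1)$ with $p=2m$ the powers of $h$ cancel exactly, so H\"older continuity at a single point $t_0$ forces $\|\partial_\xi^{m}L(\cdot,[t_0-h,t_0+h])\|_{L^2}\geq c_K>0$ for all small $h$, and it is the decay of this norm that produces the contradiction. That decay is provable — $\E\|\partial_\xi^{m}L(\cdot,I)\|_{L^2}^2=C_m\int_I\int_I\sigma^{-(2m+1)}\,ds\,dt$; summing over the dyadic intervals of scale $2^{-n}$ gives an expectation tending to $0$ because the integration domain shrinks to the diagonal; Markov plus Borel--Cantelli along a subsequence gives the a.s.\ statement; and additivity of $I\mapsto L(\cdot,I)$ transfers it from dyadic to arbitrary small intervals — but this chain is the real content of the theorem and your proposal leaves it as a plan. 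Two further bookkeeping defects: your identity $1/\gamma=([p]+1)/2$ silently assumes $p=[p]$; for non-integer $p$ with $[p]=2m$ the cancellation instead leaves $\|\partial_\xi^{m}L(\cdot,I)\|_{L^2}\geq c_K h^{(p-[p])/(p+1)}$, and to beat this rate one needs the extra interpolation $\int_{I^2}\sigma^{-([p]+1)}\leq\bigl(\int_{I^2}\sigma^{-(p+1)}\bigr)^{([p]+1)/(p+1)}|I|^{2(p-[p])/(p+1)}$, which your sketch never addresses; and ``nowhere H\"older'' concerns pointwise H\"older continuity at a point $t_0$, not uniform H\"older continuity on a subinterval — your hypothesis should be $|X_s-X_{t_0}|\leq K|s-t_0|^{\gamma}$, which still bounds the oscillation of $X$ on intervals centred at $t_0$, so the mechanism survives, but as written the statement being contradicted is the wrong (stronger) one.
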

\noindent
\begin{rem}
A particular situation is obtained if,
$$\E(X_t-X_s)^2 \geq c |t-s|^{\beta},$$
where $0 < \beta < 2$ and  $c$ is a positive constant.  We can easily deduce that a.s. the trajectories of the process $X$ are  nowhere $\beta$-H\"{o}lder continuous.\par
\end{rem}
To derive from Kolmogorov continuity theorem, the joint continuity of the local time and the H\"{o}lder continuity in $t$ of $L(\xi,t)$, our starting point is the following identities about the moments of the increments of the local time
\begin{equation*}
\begin{split}
  & \E[L(\xi+k,t+h)-L(\xi,t+h)-L(\xi+k,t)+L(\xi,t)]^n \\
   =& (2\pi)^{-n}\int_{[t,t+h]^n}\int_{\R^n}\prod_{j=1}^{n}(e^{-i(\xi+k)u_j}-e^{-i\xi u_j})\E[\exp(i\sum_{j=1}^{n}u_jX_{t_j})]d\overline{u}d\overline{t},
\end{split}
\end{equation*}
and
\begin{equation*}
  \E[L(\xi,t+h)-L(\xi,t)]^n = (2\pi)^{-n}\int_{[t,t+h]^n}\int_{\R^n}\exp(-i\xi\sum_{j=1}^{n}u_j)\E[\exp(i\sum_{j=1}^{n}u_jX_{t_j})]d\overline{u}d\overline{t},
\end{equation*}
where $\overline{u}=(u_1,...,u_n)$, $\overline{t}=(t_1,...,t_n)$, $\xi$, $\xi+k\in \R$ and $t$, $t+h\in [0,T]$.
To find suitable upper bounds for these increments for a given centered Gaussian process $ X $, the expression $\var[\sum_{j=1}^{n} v_j(X_{t_j}-X_{t_{j-1}})]$ appearing by an appropriate change of variables in the characteristic function above, is lower bounded by $C_m\sum_{j=1}^{n} v_j^2 \var(X_{t_j}-X_{t_{j-1}})$ when $X$ verifies the LND property (see, \cite[Lemma 2.3.]{Berman2}). This last property reads as follows:
\begin{equation}\label{LND}
  \lim_{c\to 0}\inf_{0\leq t-r\leq c,\;r<s<t}\frac{\var(X_t-X_s|X_{\tau},\;r\leq \tau \leq s)}{\var(X_t-X_s)}>0.
\end{equation}
To analyze the local time of the solution to the stochastic heat equation, the LND property will be verified in the next for $u(t,x)$ viewed either as a process in time variable for any fixed $x\in \R$ or a process in space variable for any fixed $t$.

\subsection{Modular Besov spaces}\label{Besovsection}
\tab
In this paragraph, we recall some notions of modular Besov spaces and their characterizations in terms of some sequences spaces. Let $I\subset\R$ be a compact interval, $1\leq p<\infty$ and $f\in L^p(I\,;\,\R)$. We can measure the smoothness of $f$ by its modulus of continuity computed in the $L^p$-norm. For this end, let us define for any $t>0$,
$$\Delta_{p}(f,I)(t)=\sup_{|s|\leq t}\left\{\int_{I_s}|f(x+s)-f(x)|^pdx\right\}^{\frac{1}{p}},$$
where $I_s=\{x\in I;\;\;x+s\in I\}$.\\
We can find in the literature various ways to define Besov norms.  They are generally defined with respect to some moduli, which are non-decreasing and continuous positive functions $\omega$ defined on $[0,+\infty[$, s.t. $\omega(0)=0$.
The most typical example of moduli is:
\begin{equation}\label{Modulus}
\omega_{\alpha, \lambda}(t)=t^\alpha(\log(1/t))^{\lambda} ,
\end{equation}
 with $0<\alpha<1$ and $\lambda\in \R$. We refer to \cite{Roynette}  for a more details on this subject.
 \begin{rem}
   From now on, we will only be concerned with the moduli $\omega_{\alpha, \lambda}$ defined in \eqref{Modulus} and write $\omega$ instead of $\omega_{\alpha, \lambda}$ unless a confusion exists.
 \end{rem}
Let $\omega$ be any modulus defined by \eqref{Modulus}, we consider the norm
$$||f||_{\omega, p}:=||f||_{L^p(I)}+\sup_{0< t\leq 1}\frac{\Delta_{p}(f,I)(t)}{\omega(t)}.$$
The modular Besov space is given by
$$\mathcal{B}^{\omega}_{p}(I)=\{f\in L^p(I);\;\; ||f||_{\omega, p}<\infty\}.$$
The space $(\mathcal{B}^{\omega}_{p}(I), ||\, .\, ||_{\omega, p})$ is a non separable Banach space. We also define
$$\mathcal{B}^{\omega,0}_{p}(I)=\{f\in L^p(I);\;\; \Delta_{p}(f, I)(t)=o(\omega(t))\;\text{as}\;t\rightarrow 0^+\}.$$
$\mathcal{B}^{\omega,0}_{p}(I)$ is a separable subspace of $\mathcal{B}^{\omega}_{p}(I)$.
For $p=\infty$, the space $\mathcal{B}^{\omega}_{\infty}(I) $ is defined in the same way by using the usual $L^\infty$-norm. In this case it coincides with
 $\mathcal{H}^{\omega}(I)$, the $\omega$-H\"{o}lder space defined by
    \begin{equation}\label{C omega}
      \mathcal{H}^{\omega}(I):=\left\{f\,:I\longrightarrow\R\;\; \vert\,\,\,\esssup_{x,y\in I \atop x\neq y }\frac{|f(x)-f(y)|}{\omega(|x-y|)}<\infty\right\},
    \end{equation}
    endowed with the norm $||f||_{\omega,\infty}=\displaystyle \esssup_{x\in I}|f(x)|+\esssup_{x,y\in I \atop x\neq y }\frac{|f(x)-f(y)|}{\omega(|x-y|)}.$

As it is mentioned in the introduction, standard  changes of variables permit  us to restrict ourselves to the unit interval $I=[0,1] $, so we will omit to specify the interval $I$ in our notations.
The following theorem (Theorem \ref{Besov})  is a characterization of modular Besov spaces $\mathcal{B}^{\omega}_{p}$ in terms of progressive differences of a function in dyadic points.
Its proof has been established for general moduli in  \cite{Roynette}.
\begin{thm}\label{Besov}
Let $\omega$ be as in \eqref{Modulus}, $p>1$, $\frac{1}{p}<\alpha<1$ and $\lambda\geq 0 $, then we have
\begin{enumerate}
  \item  $\mathcal{B}^{\omega}_{p}$ is linearly isomorphic to a sequences space and we have the following equivalence of norms:
      $$||f||_{\omega, p}\sim\max\left\{|f_0|,|f_1|,\sup_{j}\frac{2^{-j\left(\tfrac{1}{2}+\tfrac{1}{p}\right)}}{\omega(2^{-j})}\left[\sum_{k=1}^{2^{j}}|f_{jk}|^p\right]^{\tfrac{1}{p}} \right\},$$
where the coefficients $\left\{f_0,  f_1, f_{jk}\,, j\geq0\,, 1\leq k\leq 2^{j}\right\}$ are given by
$$f_0=f(0),\qquad f_1=f(1)-f(0),$$
$$f_{jk}=2\cdot 2^{j/2}\left\{f\left(\frac{2k-1}{2^{j+1}}\right)-\frac{1}{2}f\left(\frac{2k}{2^{j+1}}\right)-\frac{1}{2}f\left(\frac{2k-2}{2^{j+1}}\right)\right\}.$$
  \item $f$ is in $\mathcal{B}^{\omega, 0}_{p}$ if and only if
  $$\lim_{j\to 0}\frac{2^{-j\left(\tfrac{1}{2}+\tfrac{1}{p}\right)}}{\omega(2^{-j})}\left[\sum_{k=1}^{2^j}|f_{jk}|^p\right]^{\tfrac{1}{p}}=0.$$
\end{enumerate}
\end{thm}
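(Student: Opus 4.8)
The plan is to realise the stated isomorphism through the Faber--Schauder system and to reduce the equivalence of norms to a two-sided comparison between the $L^p$-modulus $\Delta_p(f)$ and the weighted coefficient sequence. First I would fix the basis: let $h_{jk}$ denote the $L^2$-normalised Haar functions on $[0,1]$ and let $s_{jk}(x)=\int_0^x h_{jk}(u)\,du$ be the associated Schauder (tent) functions, so that $s_{jk}$ is supported on $[(2k-2)2^{-j-1},\,2k\,2^{-j-1}]$, peaks at the midpoint $(2k-1)2^{-j-1}$ with height $2^{-j/2-1}$, and has slopes $\pm 2^{j/2}$. Every continuous $f$ has the uniformly convergent expansion $f=f_0+f_1\,\mathrm{id}+\sum_{j\ge 0}\sum_{k=1}^{2^j}f_{jk}\,s_{jk}$, and evaluating this expansion at the three dyadic points entering the definition of $f_{jk}$ recovers exactly the stated coefficients, since the affine and coarser terms are linear on the relevant interval and hence annihilated by the second difference. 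This identifies $f_{jk}$ with the Schauder coefficient and reduces the theorem to estimating $\|f\|_{\omega,p}$ against the sequence $\{f_0,f_1,(f_{jk})\}$.

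For the \emph{easy direction} (sequence norm $\le C\,\|f\|_{\omega,p}$), I would read $f_{jk}=-2^{j/2}\Delta^2_{h}f(a_k)$, where $h=2^{-j-1}$, $a_k=(k-1)2^{-j}$, and $\Delta^2_h$ is the forward second difference; the key point is that as $k$ ranges over $1,\dots,2^j$ the windows $[a_k,a_k+2h]$ tile $[0,1]$. A standard averaging (discretization) argument then bounds the sampled second differences in $\ell^p$ by the $L^p$-modulus, giving $2^{-j/p}(\sum_k|\Delta^2_h f(a_k)|^p)^{1/p}\le C\,\Delta_p(f)(2^{-j})$; multiplying by $2^{j/2}$ and dividing by $\omega(2^{-j})$ yields precisely the desired bound, while the control of $f_0,f_1$ is immediate from $\|f\|_{L^p}$ and $\Delta_p$.

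The \emph{hard direction} --- bounding $\Delta_p(f)(t)$ by the sequence norm $M$ --- is where the main work lies. For $t\simeq 2^{-J}$ I would split the series at level $J$ and use that at each level $j$ the tents $s_{jk}$ have essentially disjoint supports, so that the level-$j$ block satisfies $\Delta_p(\sum_k f_{jk}s_{jk})(t)^p\le C\sum_k|f_{jk}|^p\,\Delta_p(s_{jk})(t)^p$. For the high-frequency block $j\ge J$ the shift exceeds the tent width, so $\Delta_p(s_{jk})(t)\le C\,2^{-j(1/2+1/p)}$ and the block contributes at most $C\,M\,\omega(2^{-j})$; for the low-frequency block $j<J$ the tents are Lipschitz on scale $t$, so $\Delta_p(s_{jk})(t)\le C\,2^{j(1/2-1/p)}t$ and the block contributes at most $C\,M\,t\,2^{j}\omega(2^{-j})$. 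Summing, the high-frequency tail $\sum_{j\ge J}\omega(2^{-j})$ converges and is dominated by its first term $\approx\omega(t)$ because $\alpha>0$, whereas the low-frequency sum $t\sum_{j<J}2^{j}\omega(2^{-j})$ is a truncated geometric series with ratio $2^{1-\alpha}>1$, hence dominated by its top term $\approx t\,2^{J(1-\alpha)}J^\lambda\approx\omega(t)$ because $\alpha<1$; here the slowly varying factor $(\log(1/t))^{\lambda}$ of $\omega_{\alpha,\lambda}$ is harmless since $j^\lambda\asymp J^\lambda$ near the cut-off. The condition $\alpha>1/p$ enters separately to guarantee that the Schauder series converges uniformly (the $L^\infty$-norm of the level-$j$ block is $\le C\,M\,2^{-j(\alpha-1/p)}j^\lambda$), so that $f$ is genuinely continuous and the reconstruction is legitimate. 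Assembling both blocks gives $\Delta_p(f)(t)\le C\,M\,\omega(t)$, i.e. $\|f\|_{\omega,p}\le C\,M$.

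Finally, part 2 follows by making these estimates quantitative in the vanishing regime: the easy-direction inequality shows that $\Delta_p(f)(t)=o(\omega(t))$ forces the weighted coefficient norms to tend to $0$, while the hard-direction splitting shows the converse, since once the weighted norms are small beyond a level $j_0$ the corresponding tail of $\Delta_p(f)(t)/\omega(t)$ is small and the finitely many remaining levels contribute $o(\omega(t))$ as $t\to 0^+$. I expect the summation of the $\omega$-weighted geometric series in the reconstruction step --- together with checking that the logarithmic correction does not disturb the geometric domination at the cut-off --- to be the principal obstacle; the remaining manipulations of second differences and tent functions are careful but routine.
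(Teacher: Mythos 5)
The paper itself offers no proof of Theorem \ref{Besov}: it is quoted from Ciesielski--Kerkyacharian--Roynette \cite{Roynette}, and your overall architecture (Faber--Schauder expansion, coefficients as scaled dyadic second differences, reconstruction by splitting the series at the level $2^{-J}\simeq t$, geometric domination of the two blocks with the logarithmic factor absorbed near the cut-off) is indeed the architecture of that classical argument; your ``hard direction'' is essentially sound. However, your ``easy direction'' rests on a false inequality. You claim that a same-scale averaging argument gives
$$2^{-j/p}\Bigl(\sum_{k=1}^{2^j}\bigl|\Delta_h^2 f(a_k)\bigr|^p\Bigr)^{1/p}\le C\,\Delta_p(f,I)(2^{-j}),\qquad h=2^{-j-1},\ a_k=(k-1)2^{-j},$$
but pointwise samples cannot be controlled by the $L^p$-modulus at the same scale: take $f$ a tent of height $1$ and width $2\varepsilon\ll 2^{-j}$ centred at $a_1+h$. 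Then the left-hand side equals $2\cdot 2^{-j/p}$, while $\Delta_p(f,I)(2^{-j})\le C\varepsilon^{1/p}$ is arbitrarily small. (The same spike shows that the bound on $|f_0|=|f(0)|$ is not ``immediate from $\|f\|_{L^p}$ and $\Delta_p$'' either.) No averaging over shifts within the scale $2^{-j}$ can repair this, because what such averaging produces is the local oscillation of $f$ on the cell $[a_k,a_k+2h]$, and the oscillation is again not dominated by the same-scale modulus, as the same example shows.

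The missing idea is a multi-scale (Ul'yanov/Marchaud-type) estimate: the sampled second differences must be bounded by the moduli at \emph{all finer} dyadic scales, for instance
$$\Bigl(\sum_{k=1}^{2^j}\bigl|\Delta_h^2 f(a_k)\bigr|^p\Bigr)^{1/p}\le C\sum_{i\ge j}2^{i/p}\,\Delta_p(f,I)(2^{-i}),$$
obtained by writing the oscillation on each cell as a telescoping sum of dyadic martingale averages, bounding each level in $L^\infty$ of a cell by $2^{i/p}$ times a local $L^p$ average of differences at scale $2^{-i}$, summing in $\ell^p$ over the (disjoint) cells at fixed $i$, and applying Minkowski's inequality in $i$. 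After dividing by $2^{j/p}\,\omega(2^{-j})$, one needs $\sum_{i\ge j}2^{i/p}\omega(2^{-i})\le C\,2^{j/p}\omega(2^{-j})$, and this geometric comparison holds precisely because $\alpha>1/p$. So the hypothesis $\alpha p>1$ is not, as you assert, needed merely to guarantee uniform convergence of the Schauder series in the reconstruction step: it is what makes the coefficient-bound direction true at all, and without the multi-scale argument that direction --- and with it the ``only if'' half of part 2, which relies on it --- collapses.
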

When $\omega(t)=t^{\alpha}$, we will use the notations:
$$||f||_{\alpha,p}:=||f||_{\omega, p},\;\;\;\mathcal{B}^{\alpha}_{p}:=\mathcal{B}^{\omega}_{p}\;\;\;\text{and}\;\;\;\mathcal{B}^{\alpha, 0}_{p}:=\mathcal{B}^{\omega,0}_{p}.$$
\begin{rem} One can easily show, by Theorem \ref{Besov}, the following useful continuous injections
\begin{itemize}
\item For any $\varepsilon>0$, $1\leq p<\infty$ and $\frac{1}{p}<\alpha<1$,
 $$\mathcal{H}^{\alpha+\varepsilon}\hookrightarrow\mathcal{B}^{\alpha, 0}_{p}\hookrightarrow\mathcal{B}^{\alpha}_{p}\hookrightarrow \mathcal{H}^{\alpha-\frac{1}{p}}. $$
  \item Let $1\leq p<\infty$ and $0<\alpha<\beta<1$, we have
    $$\mathcal{B}^{\beta}_{p}\hookrightarrow \mathcal{B}^{\alpha, 0}_{p}.$$

\item Let $\omega(t)=t^{\alpha} (\log{(1/t)})^{\lambda}$, $0<\alpha<1$ and $\lambda\geq 0 $, then
\begin{equation}\label{Rinject}
\mathcal{B}^{\omega}_{p}\hookrightarrow \mathcal{H}^{\gamma},
\end{equation}
for any $\gamma<\alpha $ and $p$ sufficiently large.
\end{itemize}
\end{rem}
\subsection{Besov–Orlicz spaces}\label{BesovOrliczSection}
Let $I\subset \R$, be a compact interval, and $\mathcal N$ is the Young function defined by $\mathcal{N}(x)=e^{x^2}-1$. The Orlicz space $L_{\mathcal{N}}(I)$ is the space of measurable  functions $f:I\mapsto \R$, such that
$$\|f\|^*_{\mathcal{N}}:=\inf_{\lambda>0}\frac{1}{\lambda}\left[1+\int_{I}\mathcal{N}(\lambda f(t))dt\right]<\infty.$$
It is more suitable to use an equivalent norm to $\|\cdot\|^*_{\mathcal{N}}$ (see e.g. Ciesielski \cite{Ciesielski}):
$$\|f\|_{\mathcal{N}}=\sup_{p\geq 1}\frac{\|f\|_{L^p(I)}}{\sqrt{p}}.$$
Let $\Delta_{\mathcal{N}}(f, I)(t)$ be the modulus of continuity of $f$ in the Orlicz space $L_{\mathcal{N}}(I)$ defined as:
$$\Delta_{\mathcal{N}}(f,I)(t)=\sup_{p\geq 1}\frac{\Delta_p(f,I)(t)}{\sqrt{p}}.$$
For $0<\alpha<1$, we consider the following norm
$$\|f\|_{\alpha, \mathcal{N}}=\|f\|_{\mathcal{N}}+\sup_{0<t\leq 1}\frac{\Delta_{\mathcal{N}}(f,I)(t)}{t^{\alpha}}.$$
The Besov-Orlicz space is defined by
$$\mathcal{B}^{\alpha}_{\mathcal{N}}(I):=\{f\in L_{\mathcal{N}}(I);\; \|f\|_{\alpha, \mathcal{N}}<\infty\} .$$
$\mathcal{B}^{\alpha}_{\mathcal{N}}(I) $  endowed with the norm
$\|\cdot\|_{\alpha, \mathcal{N}}$ is a non separable Banach space. We introduce $\mathcal{B}^{\alpha, 0}_{\mathcal{N}}(I)=\{f\in L_{\mathcal{N}}(I);\;\; \Delta_{\mathcal{N}}(f, I)(t)=o(t^{\alpha})\;\text{as}\;t\rightarrow 0^+\} $ a separable subspace of $\mathcal{B}^{\alpha}_{\mathcal{N}}(I)$.

We will restrict ourselves to the interval $I=[0,1]$,
so we will omit to precise the interval $I$ in our notations, e.g. we will use
$\mathcal{B}^{\alpha}_{\mathcal{N}} $ to denote the Besov-Orlicz space  $ \mathcal{B}^{\alpha}_{\mathcal{N}}(|0,1])$. With the same notations as in Theorem \ref{Besov}, we have the following isomorphism theorem (see Ciesielski \cite{Ciesielski} or Ciesielski \textit{et al.} \cite{Roynette}):
\begin{thm}\label{Besov Orlicz}
We have
\begin{enumerate}
  \item  $\mathcal{B}^{\alpha}_{\mathcal{N}}$ is linearly isomorphic to a sequences space and we have the following equivalence of norms:
      $$||f||_{\alpha, \mathcal{N}}\sim\max\left\{|f_0|,|f_1|,\sup_{p,j}\frac{1}{\sqrt{p}}2^{-j\left(\tfrac{1}{2}-\alpha+\tfrac{1}{p}\right)}\left[\sum_{k=1}^{2^{j}}|f_{jk}|^p\right]^{\tfrac{1}{p}} \right\},$$
  \item $f$ belongs to $\mathcal{B}^{\alpha, 0}_{ \mathcal{N}} $ if and only if
  $$\lim_{j\to \infty}\sup_{p\geq 1}\frac{1}{\sqrt{p}}2^{-j\left(\tfrac{1}{2}-\alpha+\tfrac{1}{p}\right)}\left[\sum_{k=1}^{2^j}|f_{jk}|^p\right]^{\tfrac{1}{p}}=0.$$
\end{enumerate}
\end{thm}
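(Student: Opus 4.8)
The plan is to deduce both assertions from Theorem \ref{Besov} applied with the power modulus $\omega(t)=t^\alpha$, by exploiting that both the Orlicz norm and the Orlicz modulus are, by their very definition, weighted suprema over $p$ of their $L^p$ counterparts. The first step is to reorganise the Besov--Orlicz norm. Since $\|f\|_{\mathcal N}=\sup_{p\ge1}\|f\|_{L^p}/\sqrt p$ and $\Delta_{\mathcal N}(f)(t)=\sup_{p\ge1}\Delta_p(f)(t)/\sqrt p$, interchanging the two suprema gives
$$\sup_{0<t\le1}\frac{\Delta_{\mathcal N}(f)(t)}{t^\alpha}=\sup_{p\ge1}\frac1{\sqrt p}\sup_{0<t\le1}\frac{\Delta_p(f)(t)}{t^\alpha}.$$
Writing $A_p=\|f\|_{L^p}/\sqrt p$ and $B_p=\frac1{\sqrt p}\sup_{t}\Delta_p(f)(t)/t^\alpha$, so that $A_p+B_p=\|f\|_{\alpha,p}/\sqrt p$, and using that $\sup_p A_p+\sup_p B_p$ is comparable to $\sup_p(A_p+B_p)$ within a factor $2$, I obtain the clean reduction
$$\|f\|_{\alpha,\mathcal N}\;\sim\;\sup_{p\ge1}\frac{\|f\|_{\alpha,p}}{\sqrt p}.$$

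It remains to evaluate the right-hand side through Theorem \ref{Besov}, which however applies only for $p>1/\alpha$. I would first dispose of the range $1\le p\le 1/\alpha$: on $[0,1]$ both $\|\cdot\|_{L^p}$ and $\Delta_p(f)(\cdot)$ are nondecreasing in $p$ (by H\"older, since the underlying measure has mass $\le1$), so $\|f\|_{\alpha,p}\le\|f\|_{\alpha,p_0}$ for $p\le p_0$, where $p_0$ is any fixed value exceeding $\max(1,1/\alpha)$; hence $\sup_{p\ge1}\|f\|_{\alpha,p}/\sqrt p\sim\sup_{p>1/\alpha}\|f\|_{\alpha,p}/\sqrt p$. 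On the remaining range Theorem \ref{Besov} yields $\|f\|_{\alpha,p}\sim m_p(f)$ with $m_p(f)=\max\{|f_0|,|f_1|,\ \sup_j 2^{-j(1/2-\alpha+1/p)}(\sum_k|f_{jk}|^p)^{1/p}\}$. Dividing by $\sqrt p$, taking the supremum over $p$, and using the elementary identity $\sup_p\frac1{\sqrt p}\max\{a_p,b_p,c_p\}=\max\{\sup_p a_p/\sqrt p,\ \sup_p b_p/\sqrt p,\ \sup_p c_p/\sqrt p\}$ together with $\sup_p|f_0|/\sqrt p=|f_0|$ (attained at $p=1$), I recover exactly the sequence-space expression in assertion (1). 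The genuine difficulty — the actual content of the theorem — is that the equivalence constants in Theorem \ref{Besov} must be chosen \emph{independent of $p$}, since otherwise the equivalence does not survive the supremum over $p$; here the exact within-level identity $\|\sum_k f_{jk}\phi_{jk}\|_{L^p}^p=\sum_k|f_{jk}|^p\|\phi_{jk}\|_{L^p}^p$ (valid $p$-uniformly because the Schauder functions at level $j$ have disjoint supports) together with a careful telescoping control of the across-level interactions is essential. This $p$-uniformity is the main obstacle and the heart of Ciesielski's original argument.

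For assertion (2), note that $f\in\mathcal B^{\alpha,0}_{\mathcal N}$ means $\Delta_{\mathcal N}(f)(t)=o(t^\alpha)$, i.e. $\lim_{t\to0}\sup_{p\ge1}\Delta_p(f)(t)/(\sqrt p\,t^\alpha)=0$; this is an intrinsically uniform-in-$p$ decay and cannot be read off from the individual $L^p$ statements of Theorem \ref{Besov}(2) taken separately. Instead I would prove the \emph{localized} quantitative version of the estimate of the previous two paragraphs,
$$\sup_{0<t\le 2^{-J}}\frac{\Delta_{\mathcal N}(f)(t)}{t^\alpha}\;\sim\;\sup_{j\ge J}\ \sup_{p\ge1}\frac1{\sqrt p}\,2^{-j(1/2-\alpha+1/p)}\Big[\sum_{k=1}^{2^j}|f_{jk}|^p\Big]^{1/p},$$
uniformly in $J$, the scale $t\approx2^{-j}$ corresponding to the dyadic level $j$. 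Letting $J\to\infty$, the left-hand side tends to $0$ precisely when $f\in\mathcal B^{\alpha,0}_{\mathcal N}$, while the right-hand side tends to $0$ precisely when the stated limit condition holds, which gives the claimed equivalence. As before, the only non-routine ingredient is the $p$-uniformity of the two-sided bound; once that is in hand the passage to the tail and to the limit $J\to\infty$ is straightforward.
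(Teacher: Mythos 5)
Your formal reductions are fine as far as they go: interchanging the two suprema, the factor-$2$ comparison of $\sup_p(A_p+B_p)$ with $\sup_pA_p+\sup_pB_p$, and the disposal of $1\le p\le p_0$ by monotonicity of $L^p$-norms on a measure space of mass $\le1$ (the same monotonicity works on the sequence side, since $2^{-j/p}\bigl[\sum_k|f_{jk}|^p\bigr]^{1/p}=\bigl[2^{-j}\sum_k|f_{jk}|^p\bigr]^{1/p}$ is a power mean with respect to a probability measure). But the proposal is not a proof, for the reason you yourself name and then set aside: Theorem \ref{Besov} asserts, for each \emph{fixed} $p$, an equivalence $\|f\|_{\alpha,p}\sim m_p(f)$ whose constants may depend on $p$, and nothing in its statement gives constants uniform over $p\in[p_0,\infty)$. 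Your argument therefore reduces Theorem \ref{Besov Orlicz}(1) not to Theorem \ref{Besov} but to an unproven strengthening of it, and declaring that strengthening to be ``the heart of Ciesielski's original argument'' does not discharge it --- especially since the paper itself offers no proof to lean on: it quotes the theorem directly from \cite{Ciesielski} and \cite{Roynette}. To close the gap along your lines you would have to rerun the Schauder-basis estimates tracking $p$ explicitly: the within-level identity you mention, together with $\|\phi_{jk}\|_{L^p}=c_p\,2^{-j(1/2+1/p)}$ where $c_p\in[1/4,1/2]$ for all $p\ge1$; geometric-series constants $(1-2^{-(1-\alpha)})^{-1}$ and $(1-2^{-\alpha})^{-1}$ depending only on $\alpha$; and, in the coefficient-versus-modulus direction, factors of the type $(1-2^{-(\alpha-1/p)})^{-1}$, which blow up as $p\downarrow1/\alpha$ and force the restriction $p\ge p_0>1/\alpha$ that your first reduction fortunately permits.

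There is also a concrete error in your treatment of assertion (2): the localized equivalence you propose, with the same cutoff $J$ on both sides and constants uniform in $J$, is false in the direction you need. Take $f$ to be the single level-$0$ Schauder function (the tent with nodes at $0,\,1/2,\,1$). For every $j\ge1$ the grid point $1/2$ is a node of the level-$j$ dyadic partition, so $f$ is linear on each interval $[(2k-2)2^{-(j+1)},\,2k\cdot2^{-(j+1)}]$ and hence $f_{jk}=0$ for all $j\ge1$, $k$; your right-hand side vanishes for every $J\ge1$. Yet $f$ is Lipschitz and non-constant, so the left-hand side is of exact order $2^{-J(1-\alpha)}$, strictly positive for every $J$. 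The correct localized bound must keep a low-frequency remainder, e.g. $\sup_{0<t\le2^{-J}}\Delta_{\mathcal N}(f)(t)/t^{\alpha}\le C\bigl(\sup_{j\ge j_0}\sup_{p\ge1}\tfrac{1}{\sqrt p}2^{-j(1/2-\alpha+1/p)}\bigl[\sum_k|f_{jk}|^p\bigr]^{1/p}+2^{-(J-j_0)(1-\alpha)}\|f\|_{\alpha,\mathcal N}\bigr)$ for any $j_0<J$; letting $J\to\infty$ and then $j_0\to\infty$ still yields the desired equivalence of the two vanishing conditions, but this is not the statement you wrote, and proving it again requires exactly the $p$-uniform constants missing from the first part.
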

\begin{rem}
   For any  $p\in [1,\infty)$ and $0<\alpha<1$, the following injections are easy to verify
    $$\mathcal{B}^{\alpha}_{\mathcal{N}}\hookrightarrow\mathcal{B}^{\alpha}_{p}\quad\text{and}\quad\mathcal{B}^{\alpha, 0}_{\mathcal{N}}\hookrightarrow\mathcal{B}^{\alpha, 0}_{p}.$$
\end{rem}

\section{Besov regularity of solution to stochastic heat equation}
\subsection{Linear stochastic heat equation}\label{Linear stochastic heat equation}
Consider the linear stochastic heat equation defined by \eqref{EQ0}, where $\frac{\partial^2}{\partial t\partial x}W(t,x)$ is a space-time white noise, i.e. $(W(t,x),\;t\geq 0,\;x\in \R)$ is a centered Gaussian process with covariance function given by
$$\E[W(t,A)W(t',A')]=(t\wedge t')\, \lambda (A\cap A'),\quad A,A'\in \mathcal{B}_b(\R),\;t,t'\geq 0\,,$$
where $\lambda$ is the Lebesgue measure, $\mathcal{B}_b(\R)$ is the collection of bounded Borel sets and $W(t,x)=W(t,[0,x])$. It is well known that there exist a unique (mild) solution
of this equation given by
\begin{equation}\label{mild solution}
  u(t,x)=\int_{0}^{t}\int_{\R}G(t-s,x-y)dW(s,y),
\end{equation}
where the integral is the Wiener integral with respect to the Gaussian process $W$
and $G$ is the Green kernel of the heat equation given by
\begin{equation}\label{G}
  G(t,x)=\begin{cases}(2\pi t)^{-1/2}\exp(-\tfrac{|x|^2}{2t})\quad & \mbox{if }t>0,\;x\in \R\\
                                                                 0 & \mbox{if }t\leq 0,\;x\in \R .                                                                   \end{cases}
\end{equation}

\subsection{Besov regularity of $t\to u(t,x)$}
Our main result in this paragraph, is the following theorem
\begin{thm}\label{pricipal t}
For all $x\in \R$ and $4<p<\infty$, we have
$$\mathbb{P}(u(.,x)\in \mathcal{B}^{1/4}_{p} )=1\;\;\text{and}\;\;\mathbb{P}(u(.,x)\in \mathcal{B}^{1/4,0}_{p})=0,$$
where $u(.,x)$ is the sample path $t\in [0,1]\longrightarrow u(t,x)$.
\end{thm}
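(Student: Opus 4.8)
The plan is to move everything to the coefficient side via the isomorphism of Theorem \ref{Besov} and then exploit the Gaussianity of $u(\cdot,x)$. For fixed $x$ the coefficients
$$f_{jk}=2\cdot 2^{j/2}\left\{u\Big(\tfrac{2k-1}{2^{j+1}},x\Big)-\tfrac12 u\Big(\tfrac{2k}{2^{j+1}},x\Big)-\tfrac12 u\Big(\tfrac{2k-2}{2^{j+1}},x\Big)\right\}$$
are centred Gaussian random variables: each is $2^{j/2+1}$ times a second order increment of $u(\cdot,x)$ over the dyadic interval $[(k-1)2^{-j},\,k2^{-j}]$. By Theorem \ref{Besov}, with $\omega(t)=t^{1/4}$, membership of $u(\cdot,x)$ in $\mathcal{B}^{1/4}_{p}$ is equivalent to $\sup_{j}2^{-j(1/4+1/p)}\big[\sum_{k=1}^{2^{j}}|f_{jk}|^{p}\big]^{1/p}<\infty$, while failure of membership in $\mathcal{B}^{1/4,0}_{p}$ means that this same quantity does \emph{not} tend to $0$. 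It is convenient to set $Y_{j}=2^{-j(p/4+1)}\sum_{k=1}^{2^{j}}|f_{jk}|^{p}$, so that the relevant quantity is $Y_{j}^{1/p}$.

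\textbf{Step 1 (variance of the coefficients).} First I would establish the two-sided bound $c\,2^{j/2}\le\var(f_{jk})\le C\,2^{j/2}$, uniformly in $1\le k\le 2^{j}$, with $c,C>0$. Writing $f_{jk}$ through the Wiener integral \eqref{mild solution} and applying the white-noise isometry together with the semigroup identity $\int_{\R}G(\sigma,x-y)G(\tau,x-y)\,dy=G(\sigma+\tau,0)=(2\pi(\sigma+\tau))^{-1/2}$ from \eqref{G}, the variance of the second order increment reduces to an explicit one-dimensional integral in the time variable, of exact order $2^{-j/2}$; hence $\var(f_{jk})\asymp 2^{j/2}$. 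Equivalently one may use the closed form $\E[u(t,x)u(s,x)]=(2\pi)^{-1/2}(\sqrt{t+s}-\sqrt{|t-s|})$, which exhibits $u(\cdot,x)$ as a scaled bifractional Brownian motion of index $1/4$. This estimate is the quantitative heart of the argument.

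\textbf{Step 2 (concentration and conclusion).} Using the Gaussian absolute moments $\E|f_{jk}|^{p}=c_{p}\,\var(f_{jk})^{p/2}$ together with Step 1 gives $c'\le\E[Y_{j}]\le c_{p}C^{p/2}$, bounded above and below uniformly in $j$. The core probabilistic fact I would then prove is the concentration bound $\var(Y_{j})\le C'\,2^{-j}$: since $Y_{j}$ is an average of $2^{j}$ terms whose pairwise covariances $\cov(|f_{jk}|^{p},|f_{jk'}|^{p})$ decay with $|k-k'|$ (a consequence of Green-kernel estimates showing $\cov(f_{jk},f_{jk'})$ is small when the underlying dyadic intervals are far apart), one has $\var(\sum_{k}|f_{jk}|^{p})\asymp 2^{j}\,2^{jp/2}$, whence $\var(Y_{j})\le C'2^{-j}$. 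As $\sum_{j}2^{-j}<\infty$, Chebyshev and Borel–Cantelli yield $Y_{j}-\E[Y_{j}]\to 0$ almost surely. Combined with the two-sided control of $\E[Y_{j}]$ this simultaneously gives $\sup_{j}Y_{j}<\infty$ a.s. (so $\mathbb{P}(u(\cdot,x)\in\mathcal{B}^{1/4}_{p})=1$) and $\liminf_{j}Y_{j}>0$ a.s. (so $Y_{j}^{1/p}\not\to0$ and, by the second part of Theorem \ref{Besov}, $\mathbb{P}(u(\cdot,x)\in\mathcal{B}^{1/4,0}_{p})=0$). The stronger Besov–Orlicz statement $u(\cdot,x)\in\mathcal{B}^{1/4}_{\mathcal N}$ follows by carrying the same estimates through Theorem \ref{Besov Orlicz}, the factor $1/\sqrt{p}$ there exactly absorbing the Gaussian moment growth $c_{p}^{1/p}\asymp\sqrt{p}$, so that $\sup_{p,j}$ stays finite.

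\textbf{Main obstacle.} I expect the difficulty to be concentrated in Steps 1–2 and to be twofold. First, extracting the sharp two-sided estimate $\var(f_{jk})\asymp 2^{j/2}$ from the singular kernel $G$ requires careful handling of the time integral near the diagonal. Second, and more delicate, is the uniform control of the covariances $\cov(f_{jk},f_{jk'})$ needed for the concentration bound $\var(Y_{j})\le C'2^{-j}$: without summable correlation decay one only obtains convergence of $Y_{j}$ in mean, which is insufficient both for the almost-sure finiteness of the supremum in the positive statement and for the almost-sure positivity of the $\liminf$ in the negative statement.
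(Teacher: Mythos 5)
Your proposal is correct and is essentially the paper's own argument: the paper likewise passes to the coefficients via Theorem \ref{Besov}, proves $\var(u_{jk})\asymp 2^{j/2}$ together with correlation decay from the Green-kernel covariance (Lemmas \ref{estimation GG}, \ref{estiimation}, \ref{vjkvjk rimep}), converts this through the Gaussian covariance inequality (Lemma \ref{cauchy gaussian}) into the second-moment bound $\E\bigl[\sum_{k}(|v_{jk}|^{p}-c_{p})\bigr]^{2}\leq C\,2^{j}$, and concludes by Chebyshev and Borel--Cantelli. The only cosmetic difference is that the paper first normalizes $v_{jk}=u_{jk}/\sigma_{jk}$ and shows $2^{-j}\sum_{k}|v_{jk}|^{p}\to c_{p}$ almost surely, whereas you keep the unnormalized coefficients and show $Y_{j}-\E[Y_{j}]\to 0$ with $\E[Y_{j}]$ bounded above and below, which amounts to the same argument.
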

To prove this theorem, we are going to adapt the method used in \cite{Roynette}. For any $x\in \R$, let
$$u_{jk}=2\cdot 2^{j/2}\left\{u\left(\frac{2k-1}{2^{j+1}},x\right)-\frac{1}{2}u\left(\frac{2k}{2^{j+1}},x\right)-\frac{1}{2}u\left(\frac{2k-2}{2^{j+1}},x\right)\right\}.$$
We can rewrite $u_{jk}$ as
\begin{equation}\label{ujk}
\begin{split}
  & u_{jk}\\
&=2\cdot 2^{j/2}\left\{\int_{0}^{\frac{2k-2}{2^{j+1}}}\left[ G\left(\frac{2k-1}{2^{j+1}}-\tau,x-y\right)-\frac{1}{2}G\left(\frac{2k}{2^{j+1}}-\tau,x-y\right)\right.\right.\\
  &\qquad\qquad\qquad\qquad\left.-\frac{1}{2}G\left(\frac{2k-2}{2^{j+1}}-\tau,x-y\right)\right]W(d\tau,dy)+\\
     &+\int_{\frac{2k-2}{2^{j+1}}}^{\frac{2k-1}{2^{j+1}}}\left[G\left(\frac{2k-1}{2^{j+1}}-\tau,x-y\right)-\frac{1}{2}G\left(\frac{2k}{2^{j+1}}-\tau,x-y\right)\right]W(d\tau,dy)\\
     &\left.+ \int_{\frac{2k-1}{2^{j+1}}}^{\frac{2k}{2^{j+1}}}\left[-\frac{1}{2}G\left(\frac{2k}{2^{j+1}}-\tau,x-y\right)\right]W(d\tau,dy)\right\}\\
    &= 2\cdot 2^{j/2}\left\{I_1(j,k)+I_2(j,k)+I_3(j,k)\right\}.\\
\end{split}
\end{equation}
It is important to note that  $I_1(j,k),I_2(j,k)$ and $ I_3(j,k)$ are independent terms. We also put
\begin{equation}\label{vjk}
  v_{jk}=\frac{u_{jk}}{\sigma_{jk}}\;\;\;\text{with}\;\;\; \sigma_{jk}=\left\{\E[|u_{jk}|^2]\right\}^{1/2}.
\end{equation}
 We first state some preliminary results.
\begin{lem}\label{estimation GG}
  For all $x\in \R$, $t,s\in [0,1]$ and $0\leq r_1<r_2\leq t\wedge s$, we have
  $$\int_{r_1}^{r_2}\int_{\R}G(t-\tau,x-y)G(s-\tau,x-y)dy d\tau=\frac{1}{\sqrt{2\pi}}\left(\sqrt{t+s-2r_1}-\sqrt{t+s-2r_2}\right).$$
\end{lem}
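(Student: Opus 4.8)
The plan is to compute the double integral by Tonelli's theorem, carrying out the spatial integration over $y$ first and then the time integration over $\tau$; since the integrand is nonnegative throughout, no integrability subtleties arise and the order of integration is free. Fix $\tau\in[r_1,r_2]$ and set $a=t-\tau$, $b=s-\tau$. Because $r_2\leq t\wedge s$, both $a$ and $b$ are nonnegative on the range of integration, and strictly positive except possibly at the single endpoint $\tau=r_2=t\wedge s$, which contributes nothing. Substituting $z=x-y$ and inserting the explicit form \eqref{G} of the kernel, the inner integral becomes
$$
\int_{\R}G(a,z)\,G(b,z)\,dz=\frac{1}{2\pi\sqrt{ab}}\int_{\R}\exp\!\left(-\frac{z^{2}}{2}\Bigl(\tfrac{1}{a}+\tfrac{1}{b}\Bigr)\right)dz .
$$

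Next I would evaluate this elementary Gaussian integral. Writing $\frac{1}{a}+\frac{1}{b}=\frac{a+b}{ab}$ and using $\int_{\R}e^{-cz^{2}}dz=\sqrt{\pi/c}$ with $c=\frac{a+b}{2ab}$, the inner integral simplifies, after cancellation of the $\sqrt{ab}$ factors, to
$$
\int_{\R}G(a,z)\,G(b,z)\,dz=\frac{1}{\sqrt{2\pi}}\,(a+b)^{-1/2}=\frac{1}{\sqrt{2\pi}}\,(t+s-2\tau)^{-1/2},
$$
since $a+b=t+s-2\tau$. This is the key identity: the product of two heat kernels integrates in space to a quantity depending only on the sum of the two time lags.

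It then remains to integrate this in $\tau$ over $[r_1,r_2]$. Applying the change of variables $w=t+s-2\tau$, $dw=-2\,d\tau$, one obtains
$$
\int_{r_1}^{r_2}\frac{d\tau}{\sqrt{2\pi}\,\sqrt{t+s-2\tau}}
=\frac{1}{2\sqrt{2\pi}}\int_{t+s-2r_2}^{t+s-2r_1} w^{-1/2}\,dw
=\frac{1}{\sqrt{2\pi}}\Bigl(\sqrt{t+s-2r_1}-\sqrt{t+s-2r_2}\Bigr),
$$
using $\int w^{-1/2}dw=2\sqrt{w}$, which is exactly the claimed expression. There is no genuine obstacle in this argument: the only points requiring a word of care are the nonnegativity of the arguments $t-\tau$ and $s-\tau$ on $[r_1,r_2]$, which follows from $r_2\leq t\wedge s$, and the integrability of $(t+s-2\tau)^{-1/2}$ near $\tau=r_2$ in the boundary case $r_2=t\wedge s$, which holds because $-1/2>-1$. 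Everything else is a direct Gaussian computation.
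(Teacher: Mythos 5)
Your proof is correct and is essentially the argument the paper intends: the paper's proof is stated only as ``a consequence of successive elementary changes of variables,'' and your computation (spatial Gaussian integral giving $\frac{1}{\sqrt{2\pi}}(t+s-2\tau)^{-1/2}$, then the substitution $w=t+s-2\tau$) is exactly that calculation carried out in detail, with the boundary case $r_2=t\wedge s$ correctly handled.
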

\begin{proof} The proof is a consequence of
 successive elementary changes of variables.
\end{proof}

The following lemma is a useful tool to get precise estimations in our calculations. For the proof, we refer to \cite{Roynette}.
\begin{lem}\label{cauchy gaussian}
  Let $(X,Y)$ be a mean zero Gaussian vector such that $\E(X^2)=\E(Y^2)=1$ and $\rho=|\E XY|$. Then for any measurable functions $f$ and $g$ such that
  $\E(f(X))^2<\infty,\E(f(Y))^2<\infty$ and $f(X)$, $f(Y)$ are centred, we have
  $$|\E f(X)g(Y)|\leq \rho \left\{\E(f(X))^2\right\}^{1/2}\left\{\E(g(Y))^2\right\}^{1/2},$$
  when $f$ (or $g$) is even, we can replace $\rho$ by $\rho^2$ in the previous  inequality.
\end{lem}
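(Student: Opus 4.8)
The plan is to exploit the Hermite expansion of $f$ and $g$ in $L^2$ of the standard Gaussian measure $\gamma$, together with Mehler's formula. Writing $H_n$ for the (probabilists') Hermite polynomials, these form an orthogonal basis of $L^2(\R,\gamma)$ with $\E[H_m(N)H_n(N)]=n!\,\delta_{mn}$ for a standard normal variable $N$, and --- this is the key identity --- for a mean-zero jointly Gaussian pair $(X,Y)$ with $\E X^2=\E Y^2=1$ and $r=\E XY$ one has the Mehler relation $\E[H_m(X)H_n(Y)]=n!\,r^n\,\delta_{mn}$. First I would expand $f=\sum_{n\ge0}a_nH_n$ and $g=\sum_{n\ge0}b_nH_n$ in $L^2(\gamma)$, which is legitimate since the integrability hypotheses place both $f$ and $g$ in $L^2(\R,\gamma)$. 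The assumption that $f(X)$ (equivalently $f(Y)$, as $X$ and $Y$ are equidistributed) is centred translates exactly into $a_0=\E f(X)=0$; note that $g$ is \emph{not} required to be centred.

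The second step is to compute the cross-expectation term by term. I would introduce the partial sums $f_M=\sum_{n\le M}a_nH_n$ and $g_M=\sum_{n\le M}b_nH_n$, which converge to $f$ and $g$ in $L^2(\gamma)$, and observe that the bilinear form $(\phi,\psi)\mapsto\E[\phi(X)\psi(Y)]$ is bounded: by the ordinary Cauchy--Schwarz inequality, $|\E\phi(X)\psi(Y)|\le\{\E\phi(X)^2\}^{1/2}\{\E\psi(Y)^2\}^{1/2}$, since $X$ and $Y$ are each standard Gaussian. Passing to the limit and applying the Mehler relation to each $\E f_M(X)g_M(Y)$ then yields
$$\E f(X)g(Y)=\sum_{n\ge1}n!\,a_nb_n\,r^n,$$
where the $n=0$ term has dropped out because $a_0=0$; the series converges absolutely by Cauchy--Schwarz on the sequences, using the Parseval identities $\sum_n n!\,a_n^2=\E f(X)^2$ and $\sum_n n!\,b_n^2=\E g(Y)^2$, both finite.

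From here the bound is immediate. Since $|r|=\rho\le1$ by Cauchy--Schwarz and every surviving index satisfies $n\ge1$, we have $|r|^n\le\rho$, whence
$$|\E f(X)g(Y)|\le\rho\sum_{n\ge1}n!\,|a_n|\,|b_n|\le\rho\Big(\sum_{n\ge1}n!\,a_n^2\Big)^{1/2}\Big(\sum_{n\ge1}n!\,b_n^2\Big)^{1/2}\le\rho\,\{\E f(X)^2\}^{1/2}\{\E g(Y)^2\}^{1/2},$$
the last step using $\sum_{n\ge1}n!\,a_n^2=\E f(X)^2$ (as $a_0=0$) and $\sum_{n\ge1}n!\,b_n^2\le\E g(Y)^2$. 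For the sharpened constant, if $f$ is even then its expansion contains only even-degree Hermite polynomials, so $a_n=0$ for every odd $n$; combined with $a_0=0$ this forces every surviving term to have $n\ge2$, hence $|r|^n\le\rho^2$, and the identical computation produces the factor $\rho^2$. The case where $g$ is even is symmetric, since then $b_n=0$ for odd $n$ and the product $a_nb_n$ vanishes unless $n\ge2$.

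I expect the only genuinely delicate point to be the rigorous justification of the term-by-term evaluation, i.e. establishing the Mehler relation and the legitimacy of interchanging the limit with the expectation. This is exactly what the boundedness of the bilinear form $(\phi,\psi)\mapsto\E[\phi(X)\psi(Y)]$ on $L^2(\gamma)\times L^2(\gamma)$ secures; everything else reduces to orthogonality bookkeeping and two applications of Cauchy--Schwarz.
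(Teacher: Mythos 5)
Your proof is correct. The paper does not supply its own argument for this lemma---it defers to Ciesielski, Kerkyacharian and Roynette \cite{Roynette}---and the proof given there is precisely the Hermite expansion combined with the Mehler orthogonality relation $\E[H_m(X)H_n(Y)]=\delta_{mn}\,n!\,r^n$ that you describe, so your proposal reconstructs, in essentially the same way, the argument the paper relies on.
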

By using the equality  \eqref{ujk} and Lemma \ref{estimation GG}, we have for all $j\geq 1$ and $k,k'\in \{1,...,2^j\}$ such that $k\neq k'$
\begin{equation}\label{ujkujkprime}
  \E[u_{jk}u_{jk'}]=\frac{2\cdot 2^{j/2}}{\sqrt{\pi}}(\Delta^4\Phi_{k,k'}(0)-\Delta^4\Psi_{k,k'}(0)),
\end{equation}
where  $\Delta^4\Phi_{k,k'},\Delta^4\Psi_{k,k'}$  are the one step progressive differences of order $4$ of the functions
$$\Phi_{k,k'}(x)=(2(k+k')-x)^{1/2}\;\;\text{and}\;\;\Psi_{k,k'}(x)=(2|k-k'|-2+x)^{1/2}.$$
We also have, for all  $j\geq 1$ and $k\in \{1,...,2^j\}$
\begin{equation}\label{Eujk2}
  \E[|u_{jk}|^2]=\frac{2\cdot 2^{j/2}}{\sqrt{\pi}}(\Delta^4\Phi_{k,k}(0)+2-\frac{1}{\sqrt{2}}).
\end{equation}
\begin{lem}\label{estiimation}
  For all $j\geq 1$ and $k,k'\in \{1,...,2^j\}$ with $k\neq k'$, there exists a constant $c_{k,k'}\in (0,1)$ such that
  \begin{equation}\label{estmation Eujkujkprime}
    |\E[u_{jk}u_{jk'}]|\leq \frac{15}{8\sqrt{\pi}}\frac{2^{j/2}}{(2|k-k'|-2+c_{k,k'})^{7/2}}.
  \end{equation}
  And there exists a constant $m>0$ such that, for all $j\geq 1$ and $k\in \{1,...,2^j\}$,
  \begin{equation}\label{estmation Eujk2}
    m2^{j/2} \leq\E[|u_{jk}|^2] \leq \frac{4-\sqrt{2}}{\sqrt{\pi}} 2^{j/2}.
  \end{equation}
\end{lem}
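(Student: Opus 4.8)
The plan is to derive both estimates directly from the exact identities \eqref{ujkujkprime} and \eqref{Eujk2}, so that everything reduces to controlling the fourth-order progressive differences $\Delta^4\Phi_{k,k'}(0)$ and $\Delta^4\Psi_{k,k'}(0)$. First I would differentiate the two profiles explicitly: writing $\Phi_{k,k'}(x)=(a-x)^{1/2}$ with $a=2(k+k')$ and $\Psi_{k,k'}(x)=(b+x)^{1/2}$ with $b=2|k-k'|-2$, four successive differentiations give
\[
\Phi_{k,k'}^{(4)}(x)=-\frac{15}{16}\,(a-x)^{-7/2},\qquad \Psi_{k,k'}^{(4)}(x)=-\frac{15}{16}\,(b+x)^{-7/2},
\]
both \emph{strictly negative} on the relevant range of $x$. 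Applying the mean value theorem for forward finite differences of order four with unit step then produces points $\xi,\eta\in(0,4)$ with $\Delta^4\Phi_{k,k'}(0)=\Phi_{k,k'}^{(4)}(\xi)$ and $\Delta^4\Psi_{k,k'}(0)=\Psi_{k,k'}^{(4)}(\eta)$. Note that the numerical factor $\tfrac{15}{16}$ is exactly what turns the prefactor $\tfrac{2\cdot2^{j/2}}{\sqrt\pi}$ of \eqref{ujkujkprime}--\eqref{Eujk2} into the constants $\tfrac{15}{8\sqrt\pi}$ and $\tfrac{4-\sqrt2}{\sqrt\pi}$ appearing in the statement, using $2-\tfrac1{\sqrt2}=\tfrac{4-\sqrt2}{2}$.

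For the off-diagonal bound \eqref{estmation Eujkujkprime} I would exploit that, both differences being negative, $|\Delta^4\Phi_{k,k'}(0)-\Delta^4\Psi_{k,k'}(0)|\le\max\{|\Delta^4\Phi_{k,k'}(0)|,|\Delta^4\Psi_{k,k'}(0)|\}$, and then show that each candidate is at most $\tfrac{15}{16}(b+c_{k,k'})^{-7/2}$ for a suitable $c_{k,k'}\in(0,1)$. The elementary arithmetic identity $a-b=4\min(k,k')+2\ge6$ is the engine here. Choosing $c_{k,k'}=\min\{\eta,\tfrac12\}\in(0,1)$, the inequality $(b+\eta)^{-7/2}\le(b+c_{k,k'})^{-7/2}$ settles the $\Psi$-term, while $a-\xi>a-4\ge b+2>b+c_{k,k'}$ gives $(a-\xi)^{-7/2}<(b+c_{k,k'})^{-7/2}$ and settles the $\Phi$-term. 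Multiplying through by $\tfrac{2\cdot2^{j/2}}{\sqrt\pi}$ yields \eqref{estmation Eujkujkprime}.

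For the diagonal estimate \eqref{estmation Eujk2} the upper bound is immediate: since $\Delta^4\Phi_{k,k}(0)\le0$, identity \eqref{Eujk2} gives $\E[|u_{jk}|^2]\le\tfrac{2\cdot2^{j/2}}{\sqrt\pi}\bigl(2-\tfrac1{\sqrt2}\bigr)=\tfrac{4-\sqrt2}{\sqrt\pi}2^{j/2}$. For the lower bound it suffices to check that $\sup_{k\ge1}|\Delta^4\Phi_{k,k}(0)|<2-\tfrac1{\sqrt2}$, after which one may take $m=\tfrac2{\sqrt\pi}\bigl(2-\tfrac1{\sqrt2}-\sup_{k\ge1}|\Delta^4\Phi_{k,k}(0)|\bigr)>0$. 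I would verify this by computing the worst case $k=1$ by hand, $|\Delta^4\Phi_{1,1}(0)|=2+4\sqrt3-6\sqrt2\approx0.443$, and bounding the rest crudely via $|\Delta^4\Phi_{k,k}(0)|\le\tfrac{15}{16}(4k-4)^{-7/2}\le\tfrac{15}{16}\,4^{-7/2}$ for $k\ge2$; both quantities sit comfortably below $2-\tfrac1{\sqrt2}\approx1.293$.

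The step I expect to require the most care is the application of the finite-difference mean value theorem in the two \emph{singular} configurations, where the profiles just fail to be $C^4$ on the closed interval $[0,4]$: this occurs for $\Psi_{k,k'}$ when $|k-k'|=1$ (so $b=0$ and $\Psi^{(4)}$ blows up at $x=0$) and for $\Phi_{1,1}$ (where $a=4$ and $\Phi^{(4)}$ blows up at $x=4$). In these cases I would instead apply the intermediate value theorem directly to $\Phi^{(4)}$ or $\Psi^{(4)}$ on the open interval $(0,4)$, whose range is all of $\bigl(-\infty,-\tfrac{15}{16}4^{-7/2}\bigr)$, after checking that the finite, explicitly computable value of the corresponding finite difference lies in that range; equivalently one may use the integral representation $\Delta^4f(0)=\int_{[0,1]^4}f^{(4)}(t_1+\dots+t_4)\,dt_1\cdots dt_4$, whose integrand is negative and integrable, to obtain simultaneously the sign and a representing point $\xi$ or $\eta$ in $(0,4)$. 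Everything else is a routine chain of elementary inequalities.
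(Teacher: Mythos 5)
Your proof is correct and takes essentially the same route as the paper: both arguments start from the exact identities \eqref{ujkujkprime}--\eqref{Eujk2}, apply the mean value theorem for fourth-order finite differences to $\Phi_{k,k'}$ and $\Psi_{k,k'}$ (whose fourth derivatives are $-\tfrac{15}{16}(a-x)^{-7/2}$ and $-\tfrac{15}{16}(b+x)^{-7/2}$), and prove the lower bound in \eqref{estmation Eujk2} by treating $k=1$ with an explicit computation and $k\ge 2$ via $(4k-\beta_{k,k})^{-7/2}\le 4^{-7/2}$. Your extra care on two points the paper passes over silently --- bounding the \emph{absolute value} via $|u-v|\le\max\{u,v\}$ for $u,v\ge 0$, and justifying the mean value step at the singular configurations ($\Phi_{1,1}$ at $x=4$, and $\Psi_{k,k'}$ at $x=0$ when $|k-k'|=1$) --- is a welcome refinement of, not a departure from, the paper's argument.
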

\begin{proof}
  Denote by $\Phi_{k,k'}^{(4)},\Psi_{k,k'}^{(4)}$ the derivatives of order $4$ of respectively $\Phi_{k,k'}$ and $\Psi_{k,k'}$. By successive mean value theorem and \eqref{ujkujkprime}, there exist two constants $0<\beta_{k,k'}<4$ and $0<\gamma_{k,k'}<4$ such that
  \begin{align*}
    \E[u_{jk}u_{jk'}] &= \frac{2\cdot 2^{j/2}}{\sqrt{\pi}}(\Phi_{k,k'}^{(4)}(\beta_{k,k'})-\Psi_{k,k'}^{(4)}(\gamma_{k,k'})) \\
                      &= \frac{15.2^{j/2}}{8\sqrt{\pi}}\left(\frac{1}{(2|k-k'|-2+\gamma_{k,k'})^{7/2}}-\frac{1}{(2(k+k')-\beta_{k,k'})^{7/2}}\right)\\
                      & \leq \frac{15}{8\sqrt{\pi}}\frac{2^{j/2}}{(2|k-k'|-2+\gamma_{k,k'}\wedge 1)^{7/2}}\,.
  \end{align*}
  Then we get \eqref{estmation Eujkujkprime} with $c_{k,k'}=\gamma_{k,k'}\wedge 1$.\par
  To show the upper bound in \eqref{estmation Eujk2}, we use the mean value theorem and \eqref{Eujk2}. So, there exists a constant $0<\beta_{k,k}<4$ such that
  \begin{align}
    \E[u_{jk}^2] &= \frac{2\cdot 2^{j/2}}{\sqrt{\pi}}(\Phi_{k,k}^{(4)}(\beta_{k,k})+2-\frac{1}{\sqrt{2}})\\
     &= \frac{2\cdot 2^{j/2}}{\sqrt{\pi}}(-\frac{15}{16}\frac{1}{(4k-\beta_{k,k})^{7/2}}+2-\frac{1}{\sqrt{2}})\label{c3}\\
     &\leq \frac{2}{\sqrt{\pi}}(2-\frac{1}{\sqrt{2}})2^{j/2}.\nonumber
  \end{align}
  On the other hand to prove the lower bound, first we remark that for $k=1$, we have by \eqref{Eujk2}
  $$\E[|u_{j1}|^2]=\frac{2}{\sqrt{\pi}}(\frac{7\sqrt{2}}{4}-\sqrt{3}+\frac{3}{2})2^{j/2},$$
and for $k\in \{2,...,2^j\}$, we get by \eqref{c3}
  $$\frac{2}{\sqrt{\pi}}(-\frac{15}{16}\frac{1}{4^{7/2}}+2-\frac{1}{\sqrt{2}})2^{j/2}\leq\E[|u_{jk}|^2].$$
  So the lower bound  in \eqref{estmation Eujk2} is obtained with
  $$m=\frac{2}{\sqrt{\pi}}(-\frac{15}{16}\frac{1}{4^{7/2}}+2-\frac{1}{\sqrt{2}})\wedge \frac{2}{\sqrt{\pi}}(\frac{7\sqrt{2}}{4}-\sqrt{3}+\frac{3}{2}).$$
  This finishes the proof of Lemma \ref{estiimation}.
\end{proof}
\begin{lem}\label{vjkvjk rimep}
  There exists a constant $M>0$ such that, for all $j\geq 1$ and $k,k'\in \{1,...,2^j\}$, we have
  \begin{equation}\label{sumEujujprime}
    \sum_{k,k'=1}^{2^j}|\E v_{jk}v_{jk'}|^2\leq M 2^j,
  \end{equation}
  where $v_{jk}$ is given by \eqref{vjk}.
\end{lem}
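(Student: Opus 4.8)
The plan is to separate the double sum in \eqref{sumEujujprime} into its diagonal and off-diagonal contributions, and to control each by the estimates of Lemma \ref{estiimation} together with the trivial correlation bound coming from the normalization \eqref{vjk}. Since $v_{jk}$ is centred with $\E[v_{jk}^2]=1$, Cauchy--Schwarz gives $|\E v_{jk}v_{jk'}|\le 1$ for every pair, and in particular the $2^j$ diagonal terms $k=k'$ each equal $1$, contributing exactly $2^j$ to the sum. It then remains to bound the off-diagonal sum $\sum_{k\neq k'}|\E v_{jk}v_{jk'}|^2$ by a constant multiple of $2^j$.

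For the off-diagonal terms I would write $\E v_{jk}v_{jk'}=\E[u_{jk}u_{jk'}]/(\sigma_{jk}\sigma_{jk'})$ and combine the upper bound \eqref{estmation Eujkujkprime} on the numerator with the lower bound $\sigma_{jk}^2\ge m\,2^{j/2}$ from \eqref{estmation Eujk2} on the denominator. The powers of $2^{j/2}$ cancel, yielding a bound of the form $|\E v_{jk}v_{jk'}|^2\le C\,(2|k-k'|-2+c_{k,k'})^{-7}$ with $C=(15/(8\sqrt{\pi}))^2/m^2$ independent of $j,k,k'$. Grouping the pairs according to the distance $d=|k-k'|$, there are at most $2\cdot 2^j$ ordered pairs for each $d\in\{1,\dots,2^j-1\}$, so the total off-diagonal contribution is at most $2\cdot 2^j$ times the sum over $d$ of the per-pair bound.

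The one delicate point is the nearest-neighbour case $d=1$: there $2|k-k'|-2=0$, so the denominator in \eqref{estmation Eujkujkprime} reduces to $c_{k,k'}^{7/2}$, and since the constants $c_{k,k'}\in(0,1)$ carry no uniform lower bound, the estimate of Lemma \ref{estiimation} is useless here. This is the main obstacle, and I resolve it by falling back on the universal correlation bound $|\E v_{jk}v_{jk'}|^2\le 1$, which already caps the $d=1$ contribution by $2\cdot 2^j$. For $d\ge 2$ one has $2d-2+c_{k,k'}\ge 2(d-1)\ge 2$, so the per-pair bound is at most $C\,2^{-7}(d-1)^{-7}$ and summing over $d\ge 2$ produces a convergent series (comparable to $\zeta(7)$), contributing another constant multiple of $2^j$. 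Adding the diagonal, the $d=1$, and the $d\ge 2$ contributions then gives $\sum_{k,k'}|\E v_{jk}v_{jk'}|^2\le M\,2^j$ for a suitable constant $M>0$, which completes the proof.
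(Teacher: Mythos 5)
Your proof is correct, and its skeleton --- diagonal terms, nearest-neighbour terms, and terms with $|k-k'|\ge 2$ --- is the same decomposition the paper uses in \eqref{sumkprime k}; the treatment of the far terms is also essentially identical, since both you and the paper divide \eqref{estmation Eujkujkprime} by the lower bound in \eqref{estmation Eujk2}, cancel the factors $2^{j/2}$, and sum a series decaying like $(|k-k'|-1)^{-7}$ (the paper by comparison with an integral, you by a $\zeta(7)$-type bound). Where you genuinely depart from the paper is the delicate case $|k-k'|=1$, which you rightly identify as the point where Lemma \ref{estiimation} degenerates because the constants $c_{k,k'}$ admit no uniform lower bound. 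The paper handles this case by returning to the exact covariance formula \eqref{ujkujkprime}, computing the term $-\Delta^4\Psi_{k,k'}(0)=2(1+2\sqrt{3}-3\sqrt{2})$ explicitly for $k'=k-1$ (equation \eqref{ujkujkprime 2}), and then applying the mean value theorem to $\Phi_{k,k'}$ to reach the explicit bound $|\E[u_{jk}u_{jk'}]|\le K2^{j/2}$ of \eqref{the estimate}, whence $|\E v_{jk}v_{jk'}|\le K/m$. You instead observe that, since the $v_{jk}$ are normalized, Cauchy--Schwarz gives $|\E v_{jk}v_{jk'}|\le 1$ outright, and a bound by \emph{any} constant suffices here because there are only $O(2^j)$ nearest-neighbour pairs. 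Your shortcut is more elementary and shorter --- it bypasses the fourth-difference computation entirely --- at the cost of a less explicit constant $M$; since the lemma is only used qualitatively (through Markov and Borel--Cantelli in the proof of Theorem \ref{pricipal t}), nothing is lost.
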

\begin{proof}
Equality \eqref{vjk} gives
  \begin{align}
    \sum_{k,k'=1}^{2^j}|\E v_{jk}v_{jk'}|^2 &= 2\sum_{k'<k \atop k-k'=1}^{2^j}|\E v_{jk}v_{jk'}|^2+2\sum_{k'<k \atop k-k'\geq 2}^{2^j}|\E v_{jk}v_{jk'}|^2 +\sum_{k=1}^{2^j}\{\E [|v_{jk}|^2]\}^2\nonumber\\
                                            &=2\sum_{k'<k \atop k-k'=1}^{2^j}\left|\frac{\E u_{jk}u_{jk'}}{\sigma_{jk}\sigma_{jk'}}\right|^2+2\sum_{k'<k \atop k-k'\geq 2}^{2^j}\left|\frac{\E u_{jk}u_{jk'}}{\sigma_{jk}\sigma_{jk'}}\right|^2 +2^j \label{sumkprime k}\\
                                            &=2I_1+2I_2+2^j.\nonumber
  \end{align}
  First we are going to estimate $I_1$. Since $k'=k-1$, we have by \eqref{ujkujkprime}
  \begin{equation}\label{ujkujkprime 2}
    \E[u_{jk}u_{jk'}]=\frac{2\cdot 2^{j/2}}{\sqrt{\pi}}(\Delta^4\Phi_{k,k'}(0)+2(1+2\sqrt{3}-3\sqrt{2})),
  \end{equation}
  and
  \begin{equation}\label{delta 4 phi 1}
    \Delta^4\Phi_{k,k'}(0)=\Phi_{k,k'}^{(4)}(\beta_{k,k'})=-\frac{15}{16}\frac{1}{(4k-2-\beta_{k,k'})^{7/2}},
  \end{equation}
  where $\beta_{k,k'}\in (0,4)$.
  Combining \eqref{ujkujkprime 2} and \eqref{delta 4 phi 1}, we obtain
  \begin{equation}\label{delta 4 phi 1 and 2}
    \E[u_{jk}u_{jk'}]=\frac{2\cdot 2^{j/2}}{\sqrt{\pi}}\left(-\frac{15}{16}\frac{1}{(4k-2-\beta_{k,k'})^{7/2}}+2(1+2\sqrt{3}-3\sqrt{2})\right).
  \end{equation}
  So, we get
  \begin{equation}\label{positive estimate}
    \E[u_{jk}u_{jk'}]\leq \frac{4\cdot 2^{j/2}}{\sqrt{\pi}}(1+2\sqrt{3}-3\sqrt{2}),
  \end{equation}
  and
  \begin{equation}\label{negative estimate}
  \begin{split}
     -\E[u_{jk}u_{jk'}] &\leq \frac{15\cdot 2^{j/2}}{8\sqrt{\pi}}\frac{1}{(4k-2-\beta_{k,k'})^{7/2}} \\
       &\leq \frac{15\cdot 2^{j/2}}{8\sqrt{\pi}}\frac{1}{2^{7/2}}.
  \end{split}
  \end{equation}
  Combining  \eqref{positive estimate} and \eqref{negative estimate}, we obtain that
  \begin{equation}\label{the estimate}
    \left|\E[u_{jk}u_{jk'}]\right|\leq K2^{j/2},
  \end{equation}
  where
  $K=\frac{4(1+2\sqrt{3}-3\sqrt{2})}{\sqrt{\pi}}\vee \frac{15}{2^{13/2}\sqrt{\pi}}.$
  So by \eqref{estmation Eujk2} and \eqref{the estimate}, we have
  $$I_1\leq \left(\frac{K}{m}\right)^2(2^j-1).$$
  Now we are going to estimate $I_2$. If we note $C=\left(\frac{15}{m8\sqrt{\pi}}\right)^2$, we have by \eqref{estmation Eujkujkprime} and\eqref{estmation Eujk2},
  \begin{align*}
    I_2 \leq C\,\sum_{k'<k \atop k-k'\geq 2}^{2^j}\frac{1}{(2(k-k')-2+c_{k,k'})^{7}}\\
     &= C\,\sum_{k=3}^{2^j}\sum_{k'=1}^{k-2}\frac{1}{(2(k-k')-2+c_{k,k'})^{7}} \\
     &\leq C\,\sum_{k=3}^{2^j}\sum_{k'=1}^{k-2}\frac{1}{(2(k-k')-2)^{7}} \\
     &\leq C\, \sum_{k=3}^{2^j}\left(\sum_{k'=1}^{k-2}\int_{2(k-k')-3}^{2(k-k')-2}\frac{1}{x^7}dx\right)\\
     &\leq C\, \sum_{k=3}^{2^j}\left(\int_{1}^{2k-4}\frac{1}{x^7}dx\right)
     \leq \frac{C}{6}\, (2^j-2).
  \end{align*}
  This finishes the proof of the lemma  \ref{vjkvjk rimep}.
\end{proof}
\begin{lem}\label{vjk mmoins cp}
For all $j\geq1$ and $k\in \{1,...,2^j\}$, we have
  \begin{equation}\label{vjk moins cp}
    \E\left[\sum_{k=1}^{2^j}(|v_{jk}|^p-c_p)\right]^2\leq (c_{2p}-c_p^2)M 2^j,
  \end{equation}
  where $
    c_p=\frac{1}{\sqrt{2\pi}}\int_{\R}|x|^pe^{-\tfrac{x^2}{2}}dx. $
\end{lem}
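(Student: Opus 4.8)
The plan is to exploit the fact that each $v_{jk}$ is a standard Gaussian variable and to expand the second moment of the centered sum into a double sum of covariances, which is then controlled by the correlation bound of Lemma~\ref{vjkvjk rimep} via the even-function refinement of Lemma~\ref{cauchy gaussian}.

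First I would observe that $u_{jk}$, being a Wiener integral against the white noise, is a centered Gaussian variable, so that $v_{jk}=u_{jk}/\sigma_{jk}$ is centered Gaussian with unit variance; its law is therefore standard normal, whence $c_p=\E[|v_{jk}|^p]$ and $\E[(|v_{jk}|^p-c_p)^2]=c_{2p}-c_p^2$. In particular each summand $|v_{jk}|^p-c_p$ is centered, so expanding the square gives
$$\E\left[\sum_{k=1}^{2^j}(|v_{jk}|^p-c_p)\right]^2=\sum_{k,k'=1}^{2^j}\cov(|v_{jk}|^p,|v_{jk'}|^p).$$
On the diagonal $k=k'$ each term equals $\var(|v_{jk}|^p)=c_{2p}-c_p^2$, contributing $2^j(c_{2p}-c_p^2)$ in total.

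For the off-diagonal terms I would apply Lemma~\ref{cauchy gaussian} to the Gaussian pair $(v_{jk},v_{jk'})$ with $f(x)=g(x)=|x|^p-c_p$: this function is even and centered under the standard Gaussian, so the sharper form of the lemma yields
$$|\cov(|v_{jk}|^p,|v_{jk'}|^p)|\leq \rho_{k,k'}^2\,(c_{2p}-c_p^2),\qquad \rho_{k,k'}:=|\E v_{jk}v_{jk'}|.$$
The crucial point is precisely this even-function refinement, which replaces $\rho$ by $\rho^2$ and thereby produces exactly the quantity $\sum_{k,k'}|\E v_{jk}v_{jk'}|^2$ that Lemma~\ref{vjkvjk rimep} bounds by $M2^j$.

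Finally I would sum the two contributions. Writing $\sum_{k,k'}\rho_{k,k'}^2=2^j+\sum_{k\neq k'}\rho_{k,k'}^2$ and invoking Lemma~\ref{vjkvjk rimep} to bound the full sum by $M2^j$ gives $\sum_{k\neq k'}\rho_{k,k'}^2\leq (M-1)2^j$, so that
$$\E\left[\sum_{k=1}^{2^j}(|v_{jk}|^p-c_p)\right]^2\leq (c_{2p}-c_p^2)\Big(2^j+(M-1)2^j\Big)=(c_{2p}-c_p^2)M2^j,$$
which is the claimed estimate. The only delicate step is the covariance bound: one must verify that $|x|^p-c_p$ meets the hypotheses of Lemma~\ref{cauchy gaussian} (finiteness of its second moment and centering under the standard normal, plus evenness) so that the $\rho^2$ factor is legitimately available; the remainder is bookkeeping.
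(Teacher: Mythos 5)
Your proposal is correct and follows essentially the same route as the paper: expand the second moment into the double sum of covariances, apply Lemma \ref{cauchy gaussian} with the even, centered function $f(x)=g(x)=|x|^p-c_p$ to obtain the factor $(c_{2p}-c_p^2)\sum_{k,k'}|\E v_{jk}v_{jk'}|^2$, and conclude with Lemma \ref{vjkvjk rimep}. Your explicit diagonal/off-diagonal bookkeeping is just a slightly more detailed presentation of the same argument, since the $k=k'$ terms are already covered by the lemma with $\rho=1$.
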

\begin{proof}
  First we have
  $$\E\left[\sum_{k=1}^{2^j}(|v_{jk}|^p-c_p)\right]^2=\sum_{k,k'=1}^{2^j}\E\left[(|v_{jk}|^p-c_p)(|v_{jk'}|^p-c_p)\right].$$
  And by applying Lemma \ref{cauchy gaussian}, with $f(x)=g(x)=|x|^p-c_p$,  we get
  $$\E\left[\sum_{k=1}^{2^j}(|v_{jk}|^p-c_p)\right]^2\leq (c_{2p}-c_p^2)\sum_{k,k'=1}^{2^j}|\E[v_{jk}v_{jk'}]|^2.$$
 Inequality (\ref{vjk moins cp}) of Lemma \ref{vjkvjk rimep} ends the proof of the lemma \ref{vjk mmoins cp}.
\end{proof}
Now, we are ready to finish the proof of Theorem \ref{pricipal t}
\begin{proof}[Proof of Theorem \ref{pricipal t}]
We are going to show that, almost surly
\begin{equation}\label{convergence to}
  2^{-j}\sum_{k=1}^{2^j}|v_{jk}|^p\underset{j \to \infty}\longrightarrow c_p.
\end{equation}
For this end we will prove that for all $\varepsilon>0$, we have
\begin{equation}\label{serie}
  \sum_{j\geq 1} \mathbb{P}\left\{2^{-j}\sum_{k=1}^{2^j}|v_{jk}|^p\notin [c_p-\varepsilon,c_p+\varepsilon]\right\}<\infty.
\end{equation}
Markov's inequality ensures
\begin{equation}\label{inferieur}
  \mathbb{P}\left\{2^{-j}\sum_{k=1}^{2^j}|v_{jk}|^p\notin [c_p-\varepsilon,c_p+\varepsilon]\right\}\leq \frac{1}{\varepsilon^22^{2j}}\E\left[\sum_{k=1}^{2^j}(|v_{jk}|^p-c_p)\right]^2.
\end{equation}
Combining the inequality \eqref{inferieur} and Lemma  \ref{vjk mmoins cp}, we get that \eqref{serie} holds and \eqref{convergence to} is then a consequence of   Borel-Cantelli Lemma. Finally, our main result, Theorem \ref{pricipal t}, is a simple consequence of Theorem \ref{Besov}.
\end{proof}

Below we conclude a stronger regularity result than Theorem \ref{pricipal t}
\begin{thm}\label{pricipal Besov Orlicz t}
Let ${\mathcal N}(x)= e^{x^2}-1 $. For all $x\in \R$, we have
$$\mathbb{P}(u(.,x)\in \mathcal{B}^{1/4}_{{\mathcal N}} )=1\;\;\text{and}\;\;\mathbb{P}(u(.,x)\in \mathcal{B}^{1/4,0}_{{\mathcal N}})=0,$$
where $u(.,x)$ is the sample path $t\in [0,1]\longrightarrow u(t,x)$.
\end{thm}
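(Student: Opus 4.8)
The plan is to deduce membership in $\mathcal{B}^{1/4}_{\mathcal N}$ from the sequential characterization in Theorem \ref{Besov Orlicz}, exactly as Theorem \ref{pricipal t} was deduced from Theorem \ref{Besov}, but now controlling the extra supremum over $p$ that the Orlicz norm carries. Since $\alpha=1/4$ gives $\tfrac12-\alpha=\tfrac14$, Theorem \ref{Besov Orlicz} reduces the first assertion to showing that, almost surely,
$$\sup_{j\ge 1}\,2^{-j/4}\,\sup_{p\ge1}\frac{1}{\sqrt p}\Big[2^{-j}\sum_{k=1}^{2^j}|u_{jk}|^p\Big]^{1/p}<\infty .$$
Reading $\big[2^{-j}\sum_k|\cdot|^p\big]^{1/p}$ as the $L^p$-norm for the normalized counting measure $\mu_j$ on $\{1,\dots,2^j\}$, the inner supremum is precisely the Orlicz norm $\|(u_{jk})_k\|_{\mathcal N,\mu_j}$ of the $j$-th block. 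Writing $u_{jk}=\sigma_{jk}v_{jk}$ and invoking the upper bound $\sigma_{jk}\le C\,2^{j/4}$ from Lemma \ref{estiimation}, the factor $2^{-j/4}$ cancels the $2^{j/4}$ coming from $\sigma_{jk}$, and the claim reduces to $\sup_j\|(v_{jk})_k\|_{\mathcal N,\mu_j}<\infty$ a.s., where each $v_{jk}$ is standard Gaussian.

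The convenient way to handle the supremum over $p$ is to pass to the equivalent Luxemburg form of the Orlicz norm: one has $\|(v_{jk})_k\|_{\mathcal N,\mu_j}\le\Lambda$ as soon as $2^{-j}\sum_{k=1}^{2^j}\big(e^{v_{jk}^2/\Lambda^2}-1\big)\le1$. So I would fix a constant $\Lambda$ and study $S_j:=2^{-j}\sum_{k=1}^{2^j}e^{v_{jk}^2/\Lambda^2}$. Since each $v_{jk}$ is standard normal, $\E S_j=(1-2/\Lambda^2)^{-1/2}$ for $\Lambda>\sqrt2$, and by taking $\Lambda$ large enough this mean is made smaller than, say, $3/2$.

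It then remains to show that $S_j$ concentrates around its mean. I would bound $\var(S_j)=2^{-2j}\sum_{k,k'}\cov\big(e^{v_{jk}^2/\Lambda^2},e^{v_{jk'}^2/\Lambda^2}\big)$ by applying Lemma \ref{cauchy gaussian} with the even function $f(x)=g(x)=e^{x^2/\Lambda^2}$, which replaces the correlation by its square; choosing $\Lambda>2$ keeps $\E e^{2v_{jk}^2/\Lambda^2}$ finite, so each covariance is at most $C_\Lambda|\E v_{jk}v_{jk'}|^2$. Lemma \ref{vjkvjk rimep} then gives $\sum_{k,k'}|\E v_{jk}v_{jk'}|^2\le M2^j$, whence $\var(S_j)\le C_\Lambda M\,2^{-j}$, which is summable. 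By Chebyshev's inequality and Borel--Cantelli, $S_j\to\E S_j$ almost surely, so a.s. $S_j<2$ for all large $j$; consequently $2^{-j}\sum_k(e^{v_{jk}^2/\Lambda^2}-1)\le1$ eventually, and $\sup_j\|(v_{jk})_k\|_{\mathcal N,\mu_j}<\infty$. This yields $\mathbb{P}(u(.,x)\in\mathcal{B}^{1/4}_{\mathcal N})=1$.

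For the second assertion there is nothing new to compute: by the continuous embedding $\mathcal{B}^{1/4,0}_{\mathcal N}\hookrightarrow\mathcal{B}^{1/4,0}_{p}$ recalled after Theorem \ref{Besov Orlicz}, we have $\mathbb{P}(u(.,x)\in\mathcal{B}^{1/4,0}_{\mathcal N})\le\mathbb{P}(u(.,x)\in\mathcal{B}^{1/4,0}_{p})$, and the right-hand side vanishes by Theorem \ref{pricipal t}. The only genuine difficulty is the uniformity over $p$ in the first part; the exponential (Luxemburg) reformulation is exactly what turns that supremum into a single finite exponential moment, after which the already-established correlation estimate of Lemma \ref{vjkvjk rimep} supplies the required concentration.
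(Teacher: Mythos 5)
Your proposal is correct, but it takes a genuinely different route from the paper's. The paper disposes of this theorem in three lines by invoking the scheme of \cite[Theorem II.5]{Roynette}: it keeps the moment form of the Orlicz norm, reuses Lemma \ref{vjk mmoins cp} for every $p$, and injects the Stirling-type asymptotics $c_{2p}=(2p)!/(p!2^p)\sim e^{-p}2^{p+1/2}p^p$, hence $c_{2p}\leq c\,e^{-p}(2p)^p$, so that the Chebyshev bounds coming from Lemma \ref{vjk mmoins cp} are summable jointly in $(j,p)$; a doubly indexed Borel--Cantelli argument then delivers exactly the uniformity in $p$ demanded by the sequential characterization of Theorem \ref{Besov Orlicz}. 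You instead collapse the supremum over $p$ at the outset, rewriting the inner supremum as an Orlicz norm for the normalized counting measure and passing to its Luxemburg form, so the whole uniformity question reduces to concentration of the single exponential functional $S_j=2^{-j}\sum_{k}e^{v_{jk}^2/\Lambda^2}$; this you obtain from Lemma \ref{cauchy gaussian} (applied, as it must be, to the centered even function $e^{x^2/\Lambda^2}-\E\, e^{Z^2/\Lambda^2}$, with $\Lambda>2$ so that the second moment exists) together with the same correlation estimate, Lemma \ref{vjkvjk rimep}, and a single Borel--Cantelli in $j$. The two arguments consume the same inputs --- Lemmas \ref{cauchy gaussian} and \ref{vjkvjk rimep} --- but your exponential reformulation trades the paper's bookkeeping of $p$-dependent constants for one fixed Gaussian moment computation; the only point you should make explicit is that the Luxemburg norm and Ciesielski's $\sup_p$ norm are equivalent with constants independent of $j$ (true on any probability space), so that bounding the former uniformly in $j$ bounds the latter. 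Your treatment of the second assertion, via the embedding $\mathcal{B}^{1/4,0}_{\mathcal N}\hookrightarrow\mathcal{B}^{1/4,0}_{p}$ and Theorem \ref{pricipal t}, is the natural one and requires no new estimate.
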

\begin{proof}
 The proof is similar to that one of \cite[Theorem II.5]{Roynette}. Indeed, we use Lemma \ref{vjk mmoins cp} and the fact that for  positive integer $p$, we have
 $$c_{2p}=(2p)!/(p!2^p)\underset{p\to \infty}{\thicksim}e^{-p}2^{p+1/2}p^p.$$
 Therefore, there exists a constant $c$ with $c>1$, such that $c_{2p}\leq c e^{-p}(2p)^p.$
\end{proof}
\begin{rem}
 Lemma \ref{estimation GG} shows clearly that the process $(u(t, x), t\geq0)$ is, up to a constant, a bifractional Brownian motion with parameters $H=K=\frac{1}{2}$. We believe that, with the same arguments, we can prove that $\mathbb{P}[ W^{H,K}\in \mathcal{B}^{HK}_{p}]=1 $ and $\mathbb{P}[ W^{H,K}\in \mathcal{B}^{HK, 0}_{p}]=0 $, where $(W^{H,K}_{t},\,\, t\in[0,1])$ is a bifractional Brownian motion with parameters $H\in (0,1)$ and $K\in (0,1]$. We investigate this result in an upcoming paper.
\end{rem}
\subsection{Besov regularity of $x\to u(t, x)$}
In this paragraph we will study the Besov regularity of $(u(t,x),\;x\in [0,1])$ for any fixed $t\in (0,T]$.
\begin{thm}\label{pricipal x}
For all $t\in (0,T]$ and $2<p<\infty$, we have
$$\mathbb{P}[u(t,.)\in \mathcal{B}^{1/2}_{p}]=1\;\;\text{and}\;\;\mathbb{P}[u(t,.)\in \mathcal{B}^{1/2,0}_{p}]=0,$$
where $u(t,.)$ is the sample path $x\in [0,1]\longrightarrow u(t,x)$.
\end{thm}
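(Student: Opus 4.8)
The plan is to transcribe, variable by variable, the argument used for Theorem \ref{pricipal t}; the only genuinely new ingredient is the covariance structure of the field $x\mapsto u(t,x)$ at fixed $t$ and the resulting estimates on its Haar--Schauder coefficients. For fixed $t\in(0,T]$ I set
$$u_{jk}=2\cdot 2^{j/2}\left\{u\left(t,\frac{2k-1}{2^{j+1}}\right)-\frac12 u\left(t,\frac{2k}{2^{j+1}}\right)-\frac12 u\left(t,\frac{2k-2}{2^{j+1}}\right)\right\},$$
and normalise $v_{jk}=u_{jk}/\sigma_{jk}$ with $\sigma_{jk}=(\E u_{jk}^2)^{1/2}$. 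Inserting the mild solution \eqref{mild solution}, each $u_{jk}$ is a Wiener integral over $[0,t]\times\R$ whose integrand is the second spatial difference of $G(t-\tau,\cdot-y)$. In contrast with the temporal case, all three points share the same time domain, so there is no independent splitting analogous to $I_1+I_2+I_3$; the covariances must instead be read off directly from the Wiener isometry.

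First I would prove the spatial analogue of Lemma \ref{estimation GG}. The semigroup identity $\int_\R G(\sigma,x-y)G(\sigma,x'-y)\,dy=G(2\sigma,x-x')$ together with the isometry gives
$$\E[u(t,x)u(t,x')]=\rho(x-x'),\qquad \rho(a)=\int_0^t G(2\sigma,a)\,d\sigma,$$
so that $x\mapsto u(t,x)$ is in fact a \emph{stationary} centred Gaussian process. The decisive structural fact is that, by the heat equation, $\rho''(a)=G(2t,a)$ for $a\neq 0$, which yields the decomposition
$$\rho(a)=\rho(0)-\tfrac12|a|+R(a),$$
where the whole singularity at the origin is carried by the $|a|$-cusp and the remainder $R$ is smooth (its fourth derivative being bounded on $[-1,1]$). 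Because each $u_{jk}$ is a second difference, it annihilates the constant and linear parts of $\rho$, so $\E[u_{jk}u_{jk'}]$ splits into a fourth-order difference of $-\tfrac12|\cdot|$ plus a fourth-order difference of $R$.

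These two pieces are estimated separately, and here the absolute-value singularity makes the bookkeeping lighter than in the temporal case. The second difference of $|\cdot|$ at spacing $h=2^{-(j+1)}$ is supported in $[-h,h]$, so the cusp term vanishes the moment the two stencils separate, i.e. for $|k-k'|\geq 2$ (there $y_i-z_l$ keeps a fixed sign, $|\cdot|$ is locally linear, and is again annihilated); a direct evaluation on the diagonal gives $\E u_{jk}^2=1+O(2^{-3j})$, hence $\sigma_{jk}^2\asymp 1$ uniformly in $k$. This is the exact counterpart of \eqref{estmation Eujk2}, now at scale $2^{0}$ rather than $2^{j/2}$, matching the exponent $1/2$ in place of $1/4$. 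The smooth term is a fourth difference of $R$ at spacing $h$, bounded by $C\,2^{j}h^4\|R^{(4)}\|_\infty=O(2^{-3j})$ uniformly in $k,k'$. Consequently $|\E v_{jk}v_{jk'}|$ equals $1$ on the diagonal, is $O(1)$ on the band $|k-k'|=1$, and is $O(2^{-3j})$ for $|k-k'|\geq 2$, which immediately gives the analogue of Lemma \ref{vjkvjk rimep}, namely $\sum_{k,k'}|\E v_{jk}v_{jk'}|^2\leq M2^j$: the diagonal and the nearest band supply the $O(2^j)$ main term, while the far terms contribute only $O(2^{2j}\cdot 2^{-6j})$.

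From this point the argument is verbatim that of Theorem \ref{pricipal t}: Lemma \ref{cauchy gaussian} with $f=g=|\cdot|^p-c_p$ gives $\E[\sum_k(|v_{jk}|^p-c_p)]^2\leq(c_{2p}-c_p^2)M2^j$, and Markov's inequality together with Borel--Cantelli yields $2^{-j}\sum_k|v_{jk}|^p\to c_p$ almost surely. Feeding this into Theorem \ref{Besov} with $\alpha=1/2$ and $p>2$ (so that $\tfrac1p<\alpha<1$), and using $\sigma_{jk}^2\asymp 1$ to pass between $|u_{jk}|$ and $|v_{jk}|$, the characterising quantity $2^{-j/p}[\sum_k|u_{jk}|^p]^{1/p}$ stays bounded and bounded away from $0$; by the two parts of Theorem \ref{Besov} this is precisely $u(t,\cdot)\in\mathcal{B}^{1/2}_p$ almost surely together with the a.s. failure of the $o(\cdot)$ condition defining $\mathcal{B}^{1/2,0}_p$. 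I expect the single real obstacle to be the covariance analysis of the second step---isolating the $|a|$-cusp of $\rho$ via the heat equation and controlling the fourth-order difference of the smooth remainder---after which the compact support of the singular part makes the off-diagonal estimates more transparent than the polynomially decaying sums handled in the temporal proof.
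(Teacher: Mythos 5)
Your proposal is correct, and it hits exactly the two intermediate targets the paper needs for the spatial case: two-sided constant bounds on $\E[z_{jk}^2]$ (your $\E[u_{jk}^2]$) and the bound $\sum_{k,k'}|\E[\omega_{jk}\omega_{jk'}]|^2\leq M2^j$, after which your endgame (Lemma \ref{cauchy gaussian} with $f=g=|\cdot|^p-c_p$, Markov, Borel--Cantelli, then Theorem \ref{Besov} with $\alpha=\tfrac12$) is verbatim the paper's. Where you genuinely diverge is in how the covariance estimates are produced. The paper stays formally parallel to the temporal case: it writes $\E[z_{jk}z_{jk'}]=2^j\Delta^4\Lambda_{j,k,k'}(0)$ with $\Lambda_{j,k,k'}(x)=F\bigl(\tfrac{2(k-k')-2+x}{2^{j+1}}\bigr)$, applies the mean value theorem, and computes $F^{(4)}$ explicitly, obtaining the cubic-decay bound \eqref{estimate Ezjk zjk prime} for $k\neq k'$ and the constants $m_1,m_2$ on the diagonal (Lemma \ref{lem estimation zjkzjkprime}); the cancellation that keeps everything finite is hidden in the signed integrand $\bigl[1-\tfrac1s+\tfrac{1}{12s^2}\bigr]$, whose integral over $(0,\infty)$ vanishes. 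You instead exploit stationarity and the heat equation to split $\rho=F$ as $\rho(0)-\tfrac12|a|+R(a)$ with $R''=G(2t,\cdot)$ smooth: the cusp has second difference supported at scale $2^{-j}$, so it contributes nothing once $|k-k'|\geq2$ and produces exactly the $O(1)$ diagonal, while the smooth remainder gives a uniform $O(2^{-3j})$ — in fact a sharper bound than the paper's $|k-k'|^{-3}$ for moderate $|k-k'|$, though both suffice for the $2^j$ sum-of-squares bound. Your route is more structural and makes the mechanism transparent (second differences annihilate affine functions, so only the $|a|$-cusp survives separation); the paper's route buys explicit constants in the same pattern as the temporal proof. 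One point you should make explicit when writing this up: the decomposition needs not only $\rho''(a)=G(2t,a)$ for $a\neq0$ but also the identification of the cusp coefficient, i.e. $\rho'(0^{+})=-\tfrac12$ (equivalently, the unit jump of $\rho'$ at the origin); this is a short computation, e.g. $\rho'(a)=-\tfrac{1}{2\sqrt{\pi}}\int_0^{4t/a^2}s^{-3/2}e^{-1/s}\,ds\to-\tfrac12$ as $a\to0^{+}$, of the same kind as the paper's Lemma \ref{lemma1}, but without it $R$ could retain a kink at $0$ and both the diagonal normalization and the $O(2^{-3j})$ off-diagonal bound would fail.
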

As before we first state some preliminary lemmas
\begin{lem}\label{estimation GG x}
  For all $t\in (0,T]$, $x,y\in [0,1]$, we have
  $$\int_{0}^{t}\int_{\R}G(t-\tau,x-\xi)G(t-\tau,y-\xi)d\xi d\tau=F(x-y),$$
  where
  \begin{equation}\label{F}
    F(u)=\int_{0}^{t}\frac{1}{2\sqrt{\pi r}}e^{-\tfrac{u^2}{4r}}dr.
  \end{equation}
\end{lem}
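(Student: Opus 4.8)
The plan is to evaluate the double integral by an elementary Gaussian computation, reducing the inner integral to the semigroup (Chapman--Kolmogorov) structure of the heat kernel $G$ defined in \eqref{G}. First I would perform the change of variable $r=t-\tau$ (so $dr=-d\tau$ and the limits $\tau=0,t$ become $r=t,0$), which recasts the left-hand side as
$$\int_{0}^{t}\left(\int_{\R}G(r,x-\xi)G(r,y-\xi)d\xi\right)dr.$$
Hence it suffices to prove, for each fixed $r\in(0,t]$, the inner identity
$$\int_{\R}G(r,x-\xi)G(r,y-\xi)d\xi=\frac{1}{2\sqrt{\pi r}}e^{-\frac{(x-y)^2}{4r}},$$
because integrating this in $r$ over $[0,t]$ produces exactly $F(x-y)$ as defined in \eqref{F}.

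To establish the inner identity I would insert the explicit form \eqref{G} of $G$, so that the integrand becomes $\frac{1}{2\pi r}\exp\left(-\frac{(x-\xi)^2+(y-\xi)^2}{2r}\right)$, and then complete the square in $\xi$. The key algebraic step is the elementary identity
$$(x-\xi)^2+(y-\xi)^2=2\left(\xi-\frac{x+y}{2}\right)^2+\frac{(x-y)^2}{2},$$
which separates the $\xi$-dependence from the $(x-y)$-dependence. Pulling the factor $\exp\left(-\frac{(x-y)^2}{4r}\right)$ outside the integral and evaluating the remaining standard Gaussian integral $\int_{\R}\exp\left(-\frac{1}{r}(\xi-\frac{x+y}{2})^2\right)d\xi=\sqrt{\pi r}$ yields the claimed expression. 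Equivalently, since $G(r,\cdot)$ is even, the inner integral is the convolution $(G(r,\cdot)\ast G(r,\cdot))(x-y)$, which by the semigroup property of the heat kernel equals $G(2r,x-y)$; a direct check confirms $G(2r,x-y)=\frac{1}{2\sqrt{\pi r}}e^{-(x-y)^2/(4r)}$, the same result.

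I do not expect any genuine obstacle here, as the computation is entirely elementary. The only points requiring minor care are getting the constants right in the completion of the square (equivalently, in the semigroup identity), and a routine appeal to Tonelli's theorem to justify writing the double integral as the iterated integral in $r$ and $\xi$, which is licensed by the nonnegativity of the integrand $G(r,x-\xi)G(r,y-\xi)$.
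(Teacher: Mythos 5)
Your proof is correct, and it is exactly the argument the paper intends: the paper's own proof of this lemma is simply the remark ``Straightforward calculations,'' and your change of variable $r=t-\tau$, completion of the square (equivalently, the heat-kernel semigroup identity $G(r,\cdot)\ast G(r,\cdot)=G(2r,\cdot)$), and the Gaussian integral $\int_{\R}e^{-(\xi-m)^2/r}\,d\xi=\sqrt{\pi r}$ carry out those calculations with the constants coming out right. Nothing is missing; the appeal to Tonelli is the appropriate justification for the iterated integration.
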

\begin{proof}
  Straightforward calculations.
\end{proof}
\medskip\par\noindent
Define for any $t\in (0,T]$,
\begin{equation}\label{zjk}
  z_{jk}=2\cdot 2^{j/2}\left\{u\left(t,\frac{2k-1}{2^{j+1}}\right)-\frac{1}{2}u\left(t,\frac{2k}{2^{j+1}}\right)-\frac{1}{2}u\left(t,\frac{2k-2}{2^{j+1}}\right)\right\}\,,
\end{equation}
and set
\begin{equation}\label{wjk}
  \omega_{jk}=\frac{z_{jk}}{\varrho_{jk}}\;\;\;\text{with}\;\;\;\varrho_{jk}=\{E[|z_{jk}|^2]\}^{1/2}.
\end{equation}
By using the equality \eqref{zjk} and Lemma \ref{estimation GG x} we obtain, for all $j\geq 1$ and $k,k'\in \{1,...,2^j\}$,
\begin{equation}\label{zjkzjk prime}
  \E[z_{jk}\,z_{jk'}]=2^j\Delta^4\Lambda_{j,k,k'}(0),
\end{equation}
where $\Delta^4\Lambda_{j,k,k'}$ is the one step progressive difference of order $4$ of the function
$$\Lambda_{j,k,k'}(x)=F\left(\frac{2(k-k')-2+x}{2^{j+1}}\right),$$
where $F$ is given by \eqref{F}.

\begin{lem}\label{lem estimation zjkzjkprime}
  For all $j\geq 1$ and $k,k'\in \{1,...,2^j\}$ such that $k\neq k'$, there exists a constant $\kappa_{j,k,k'}\in (0,4)$, such that
  \begin{equation}\label{estimate Ezjk zjk prime}
    |\E[z_{jk}\,z_{jk'}]|\leq \frac{3\theta}{16\sqrt{\pi}}\frac{1}{|2(k-k')-2+\kappa_{j,k,k'}|^3},
  \end{equation}
  where $\theta=\int_{0}^{\infty}\frac{e^{-\frac{1}{4s}}}{s^{5/2}}\left[1+\frac{1}{s}+\frac{1}{12}\frac{1}{s^2}\right]ds.$
  And there exist $m_{1},m_{2}>0$  such that, for all  $j\geq 1$ and $k\in \{1,...,2^j\}$,
  \begin{equation}\label{estimate Ezjk}
    m_{1}\leq \E[|z_{jk}|^2]\leq m_{2}.
  \end{equation}
\end{lem}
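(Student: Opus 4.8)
The plan is to handle both estimates through the representation \eqref{zjkzjk prime}, $\E[z_{jk}z_{jk'}] = 2^j\Delta^4\Lambda_{j,k,k'}(0)$, following the template of the time-variable case (Lemma \ref{estiimation}), the difference being that the relevant function is now the Green-kernel correlation $F$ of \eqref{F} rather than a square root. For the off-diagonal bound \eqref{estimate Ezjk zjk prime} ($k\neq k'$), I would first apply the mean value theorem successively to the fourth progressive difference, obtaining $\Delta^4\Lambda_{j,k,k'}(0) = \Lambda^{(4)}_{j,k,k'}(\kappa_{j,k,k'})$ for some $\kappa_{j,k,k'}\in(0,4)$ (the adjacent case $k-k'=1$ is admissible since $F$ remains $C^4$ away from the origin). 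The chain rule gives $\Lambda^{(4)}_{j,k,k'}(x) = 2^{-4(j+1)}F^{(4)}\bigl(\tfrac{2(k-k')-2+x}{2^{j+1}}\bigr)$, so the task reduces to estimating $F^{(4)}$. Differentiating four times under the integral sign in \eqref{F} (the fourth Hermite polynomial appears) yields, for $u>0$,
$$F^{(4)}(u) = \frac{1}{8\sqrt\pi}\int_0^t \frac{1}{r^{5/2}}\left(\frac{u^4}{4r^2} - \frac{3u^2}{r} + 3\right)e^{-u^2/4r}\,dr.$$

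Next I would change variables $r = u^2 s$ to factor out the decay in $u$, obtaining
$$F^{(4)}(u) = \frac{1}{8\sqrt\pi\,u^3}\int_0^{t/u^2}\frac{e^{-1/4s}}{s^{5/2}}\left(\frac{1}{4s^2} - \frac{3}{s} + 3\right)ds.$$
Bounding by absolute values, using $\left|\frac{1}{4s^2} - \frac{3}{s} + 3\right| \leq 3\bigl(1 + \frac1s + \frac{1}{12 s^2}\bigr)$ and the positivity of the resulting integrand to replace the upper limit by $+\infty$, I recognise the constant $\theta$ and get $|F^{(4)}(u)| \leq \frac{3\theta}{8\sqrt\pi}u^{-3}$. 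Finally, collecting the powers of two via $2^j\cdot 2^{-4(j+1)}\cdot(2^{j+1})^3 = \tfrac12$, with $u = \tfrac{2(k-k')-2+\kappa_{j,k,k'}}{2^{j+1}}$, produces exactly \eqref{estimate Ezjk zjk prime}; the absolute value on $2(k-k')-2+\kappa_{j,k,k'}$ accounts for the case $k<k'$ through the symmetry of $\E[z_{jk}z_{jk'}]$.

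For the diagonal bounds \eqref{estimate Ezjk} the mean value theorem is unavailable, since near the origin $F(u) = F(0) - c|u| + \cdots$ is only Lipschitz (not $C^4$) and the five nodes of $\Delta^4\Lambda_{j,k,k}(0)$ straddle $0$. I would therefore compute $\E[z_{jk}^2]$ directly from $\E[u(t,x)u(t,y)]=F(x-y)$, which gives
$$\E[z_{jk}^2] = 2^j\left[6F(0) - 8F\bigl(\tfrac{1}{2^{j+1}}\bigr) + 2F\bigl(\tfrac{1}{2^j}\bigr)\right].$$
Writing $\Phi(u)=F(0)-F(u) = \frac{u}{2\sqrt\pi}J(t/u^2)$ with $J(a)=\int_0^a \frac{1-e^{-1/4w}}{\sqrt w}\,dw$, the alternating combination telescopes to $\E[z_{jk}^2] = \frac{1}{\sqrt\pi}\bigl[2J(a)-J(a/4)\bigr]$ with $a = t\,2^{2(j+1)}$. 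Since $J$ is increasing and $J(\infty)<\infty$, monotonicity yields $J(a) \leq 2J(a)-J(a/4) \leq 2J(\infty)$, and because $a \geq 16t$ for $j\geq1$ one obtains \eqref{estimate Ezjk} with $m_1 = J(16t)/\sqrt\pi$ and $m_2 = 2J(\infty)/\sqrt\pi$.

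The main obstacle is the diagonal estimate: the cusp of $F$ at the origin blocks the clean mean-value reduction used off-diagonal, forcing the explicit telescoping computation, and the delicate point is verifying that $2J(a)-J(a/4)$ stays bounded away from both $0$ and $\infty$ uniformly in $j$, which hinges on the convergence of $J$ at infinity together with its monotonicity. A secondary care-point is the sign and absolute-value bookkeeping in the off-diagonal estimate and the exact identification of the constant $\theta$ after the Hermite bracket is bounded.
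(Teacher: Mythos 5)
Your proof is correct, and its off-diagonal half is essentially the paper's own argument: mean value theorem applied to the fourth progressive difference, an explicit computation of $F^{(4)}$, and the change of variables $r=u^2s$; your intermediate identity reproduces the paper's formula \eqref{E zjkzjkprime} exactly, including the constant $\tfrac{3}{16\sqrt{\pi}}$ and the upper limit $\nu=t/u^2$, and your power-of-two bookkeeping $2^j\cdot2^{-4(j+1)}\cdot(2^{j+1})^3=\tfrac12$ is right. One caveat you share with the paper: when $|k-k'|=1$ the origin is not ``avoided'' by the nodes --- it \emph{is} the endpoint node (the arguments are $0,\tfrac{1}{2^{j+1}},\dots,\tfrac{4}{2^{j+1}}$), so your parenthetical justification is imprecise as stated; the step is nevertheless valid because the mean value theorem for divided differences needs only continuity of $\Lambda_{j,k,k'}$ on the closed node interval and $C^4$ smoothness on its interior, which hold since the cusp of $F$ sits at an endpoint rather than inside (the paper is no more explicit on this point). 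Where you genuinely diverge is the diagonal bound \eqref{estimate Ezjk}. The paper keeps the representation $\E[|z_{jk}|^2]=\int_0^{t2^{2(j+1)}}\tfrac{1}{2\sqrt{\pi s}}\Delta^4\varphi_s(0)\,ds$ of \eqref{zjk phi s} and extracts the upper bound by splitting at $s=4t$, with the crude bound $\Delta^4\varphi_s(0)\le 4$ near the origin and a mean value bound on the smooth Gaussian $\varphi_s$ (smooth in $x$ for each fixed $s$, so no cusp issue) away from it, while the lower bound rests on the nonnegativity of $e^{-1/s}-4e^{-1/4s}+3$. Your telescoped formula $\E[|z_{jk}|^2]=\tfrac{1}{\sqrt{\pi}}\left[2J(a)-J(a/4)\right]$ with $a=t2^{2(j+1)}$ is the very same quantity (substituting $w=s/4$ in $J(a/4)$ recovers the paper's integrand), but your extraction of the bounds --- $J$ increasing and $J(\infty)<\infty$, hence $J(16t)\le 2J(a)-J(a/4)\le 2J(\infty)$ --- is cleaner: the monotonicity you invoke is manifest from the positive integrand of $J$, no case split or second mean value argument is needed, and the sign check the paper leaves implicit disappears. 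Both routes are correct; yours buys a shorter, more transparent diagonal argument, while the paper's avoids the evenness/telescoping step.
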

\begin{proof}
First recall that by \eqref{zjkzjk prime}, we have
\begin{equation}\label{zjkzjk prime prime}
  \E[z_{jk}\,z_{jk'}]=2^j\Delta^4\Lambda_{j,k,k'}(0).
\end{equation}
Denote by $\Lambda_{j,k,k'}^{(4)}$ the derivative of order 4 of $\Lambda_{j,k,k'}.$  By the mean value theorem and  \eqref{zjkzjk prime prime}, for all $j\geq 1$ and $k,k'\in \{1,...,2^j\}$ such that $k\neq k'$, there exists a constant $\kappa_{j,k,k'}\in (0,4)$ such that
\begin{equation}\label{deltaz}
    \E[z_{jk}\,z_{jk'}]=2^j\Lambda_{j,k,k'}^{(4)}(\kappa_{j,k,k'}).
  \end{equation}
 Taking into account the fact that $k\neq k'$, elementary calculation of  $\Lambda_{j,k,k'}^{(4)}$ entails
  \begin{equation}\label{E zjkzjkprime}
  \begin{split}
      \E[z_{jk}\,z_{jk'}]=\frac{3}{16\sqrt{\pi}}&\frac{1}{|2(k-k')-2+\kappa_{j,k,k'}|^3}\times  \\
       & \int_{0}^{\nu}\frac{e^{-\frac{1}{4s}}}{s^{5/2}}\left[1-\frac{1}{s}+\frac{1}{12}\frac{1}{s^2}\right]ds,
  \end{split}
  \end{equation}
where $\nu=t\left(\frac{2^{j+1}}{2(k-k')-2+\kappa_{j,k,k'}}\right)^2$. Then we get \eqref{estimate Ezjk zjk prime} by taking
$\theta=\int_{0}^{\infty}\frac{e^{-\frac{1}{4s}}}{s^{5/2}}\left[1+\frac{1}{s}+\frac{1}{12}\frac{1}{s^2}\right]ds.$
 \par

Now we are going to show \eqref{estimate Ezjk}. First, we start by proving the upper bound. From \eqref{zjkzjk prime} and \eqref{deltaz} we get that for all $j \geq 1$ and $k\in \{1,...,2^j\}$,
  \begin{align*}
    \E[|z_{jk}|^2] =2^j\Delta^4\Lambda_{j,k,k}(0)=2^j \int_{0}^{t}\frac{1}{2\sqrt{\pi r}}\Delta^4\psi_{j,r}(0)dr,
  \end{align*}
  where
$$\psi_{j,r}(x)=\exp\left(-\frac{1}{4r}\left(\frac{x-2}{2^{j+1}}\right)^2\right).$$
   By the change of variable $s=2^{2(j+1)}r$, we have that
   \begin{equation}\label{zjk phi s}
   \E[|z_{jk}|^2]=\int_{0}^{t2^{2(j+1)}}\frac{1}{2\sqrt{\pi s}}\Delta^4\varphi_{s}(0)ds,
   \end{equation}
   where $\varphi_{s}(x)=\frac{e^{-\frac{1}{4s}(x-2)^2}}{2}.$
   Remark that $\varphi_{s}$ depends neither  on $j$  nor on $k$. Again by mean value theorem there exists a constant $\lambda_s\in (0,4)$ depending only on $s$, such that
   \begin{equation}\label{varphi4}
     \Delta^4\varphi_{s}(0)=\varphi^{(4)}_{s}(\lambda_s)=\frac{3}{8}\frac{e^{-\frac{1}{4s}(\lambda_s-2)^2}}{s^2}\left\{1-\frac{(\lambda_s-2)^2}{s}+\frac{1}{12}\frac{(\lambda_s-2)^4}{s^2}\right\}.
   \end{equation}
   We use the formula \eqref{varphi4} away from the origin together with  \eqref{zjk phi s}, to get
   \begin{equation}\label{zjk leq}
    \begin{split}
     \E[|z_{jk}|^2] \leq  &\int_{0}^{4t}\frac{1}{2\sqrt{\pi s}}\Delta^4\varphi_{s}(0)ds\\
                          &+\int_{4t}^{\infty}\frac{3}{8}\frac{e^{-\frac{1}{4s}(\lambda_s-2)^2}}{s^2}\left\{1-\frac{(\lambda_s-2)^2}{s}+\frac{1}{12}\frac{(\lambda_s-2)^4}{s^2}\right\}ds.
     \end{split}
   \end{equation}
   Then \eqref{zjk leq} entails
   \begin{equation}\label{zjk leq 2}
     \E[|z_{jk}|^2] \leq  \int_{0}^{4t}\frac{2}{\sqrt{\pi s}}ds+\int_{4t}^{\infty}\frac{3}{8}\frac{1}{s^2}\left\{1+\frac{4}{s}+\frac{4}{3}\frac{1}{s^2}\right\}ds\,:=\, m_{2}\,.
   \end{equation}
   For the lower bound of \eqref{estimate Ezjk}, we have by  \eqref{zjk phi s}
   \begin{equation}\label{zjk phi s minoration}
    \begin{split}
   &\E[|z_{jk}|^2]\\
   &=\frac{1}{2\sqrt{\pi}}\left\{\int_{0}^{t2^{2(j+1)}}\frac{1}{\sqrt{s}}e^{-\frac{1}{s}}ds-4\int_{0}^{t2^{2(j+1)}}\frac{1}{\sqrt{s}}e^{-\frac{1}{4s}}ds+3\int_{0}^{t2^{2(j+1)}}\frac{1}{\sqrt{s}}ds\right\}\\
   &\geq \frac{1}{2\sqrt{\pi}}\int_{0}^{4t}\frac{1}{\sqrt{s}}\left(e^{-\frac{1}{s}}-4e^{-\frac{1}{4s}}+3\right)ds \,:= m_{1},
   \end{split}
   \end{equation}
the last inequality holds because the function $x\in (0,\infty)\to
  \int_{0}^{x}\frac{1}{\sqrt{s}}e^{-\frac{1}{s}}ds-4\int_{0}^{x}\frac{1}{\sqrt{s}}e^{-\frac{1}{4s}}ds+3\int_{0}^{x}\frac{1}{\sqrt{s}}ds$ is nondecreasing. So we obtain
a lower bound $m_1$, which finishes the proof of Lemma \ref{lem estimation zjkzjkprime}.
\end{proof}
\begin{lem}
  There exists a constant $K>0$ such that, for all $j \geq1$ and $k,k'\in \{1,...,2^j\}$, we have
  \begin{equation}\label{wjkwjk prime}
    \sum_{k,k'=1}^{2^j}|\E\,\omega_{jk}\,\omega_{jk'}|^2\leq K 2^j,
  \end{equation}
  where $\omega_{jk}$ is given by \eqref{wjk}.
\end{lem}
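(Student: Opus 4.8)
The plan is to mirror the argument of Lemma~\ref{vjkvjk rimep}, substituting the time-variable estimates used there by their spatial counterparts from Lemma~\ref{lem estimation zjkzjkprime}. Since each $\omega_{jk}$ has been normalized to unit variance, the diagonal contribution is exactly
$$\sum_{k=1}^{2^j}\left(\E[|\omega_{jk}|^2]\right)^2=2^j,$$
so after writing
$$\sum_{k,k'=1}^{2^j}|\E\,\omega_{jk}\,\omega_{jk'}|^2=2^j+2\sum_{k'<k}|\E\,\omega_{jk}\,\omega_{jk'}|^2,$$
it suffices to bound the off-diagonal sum by a multiple of $2^j$. I would split it according to the separation $k-k'$, treating the nearest neighbours $k-k'=1$ separately from the far terms $k-k'\geq 2$, exactly as in the time case.

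For the far terms I would use the decay estimate \eqref{estimate Ezjk zjk prime}. When $k-k'\geq 2$ one has $2(k-k')-2\geq 2$, and since $\kappa_{j,k,k'}\in(0,4)$ the denominator there satisfies $|2(k-k')-2+\kappa_{j,k,k'}|\geq 2(k-k')-2$. Combined with the lower bound $\varrho_{jk}^2=\E[|z_{jk}|^2]\geq m_{1}$ from \eqref{estimate Ezjk}, this yields
$$|\E\,\omega_{jk}\,\omega_{jk'}|=\frac{|\E[z_{jk}z_{jk'}]|}{\varrho_{jk}\varrho_{jk'}}\leq \frac{1}{m_{1}}\frac{3\theta}{16\sqrt{\pi}}\frac{1}{(2(k-k')-2)^3}.$$
Squaring and summing, for each fixed $k$ the inner sum over the admissible $k'$ is dominated by $\sum_{d\geq 2}\left(2d-2\right)^{-6}=\tfrac{1}{64}\zeta(6)$, a finite constant independent of $j$ and $k$; summing over the $2^j$ values of $k$ then gives an $O(2^j)$ bound for the far part. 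Note that the exponent $3$ in \eqref{estimate Ezjk zjk prime} is exactly what makes this work: after squaring one obtains a summable series of order $6$.

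The nearest-neighbour terms $k-k'=1$ are the delicate point and the main obstacle: there the argument of the denominator in \eqref{estimate Ezjk zjk prime} reduces to $\kappa_{j,k,k'}\in(0,4)$, which may be arbitrarily small, so the decay estimate gives no useful control. Here I would abandon it and invoke only the Cauchy--Schwarz inequality: since the $\omega_{jk}$ have unit variance, $|\E\,\omega_{jk}\,\omega_{jk'}|\leq 1$ for every pair. As there are at most $2^j-1$ nearest-neighbour pairs, their total contribution is again $O(2^j)$. Adding the three estimates --- the diagonal $2^j$, the far terms $O(2^j)$, and the nearest-neighbour terms $O(2^j)$ --- produces the desired bound $\sum_{k,k'=1}^{2^j}|\E\,\omega_{jk}\,\omega_{jk'}|^2\leq K2^j$ for a suitable constant $K>0$, which completes the proof.
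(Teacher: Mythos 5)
Your proposal is correct. The paper itself dispatches this lemma in a single sentence---``the proof uses the same calculations as those used in Lemma \ref{vjkvjk rimep}''---so the intended argument transplants the time-variable computation: the diagonal gives exactly $2^j$, the far terms $k-k'\geq 2$ are controlled by the decay estimate \eqref{estimate Ezjk zjk prime} together with the variance lower bound in \eqref{estimate Ezjk}, and the nearest-neighbour terms $k-k'=1$ are handled by an explicit evaluation of the covariance, the spatial analogue of \eqref{ujkujkprime 2}--\eqref{the estimate}. Your diagonal and far-term estimates follow precisely this template, and your bookkeeping there is right: for $k-k'\geq 2$ the denominator satisfies $|2(k-k')-2+\kappa_{j,k,k'}|\geq 2(k-k')-2\geq 2$, one has $\varrho_{jk}\varrho_{jk'}\geq m_1$, and $\sum_{d\geq 2}(2d-2)^{-6}=\zeta(6)/64<\infty$, so that part is $O(2^j)$. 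Your one departure is the nearest-neighbour terms, where you replace the explicit covariance computation by the remark that the unit normalization of the $\omega_{jk}$ and Cauchy--Schwarz give $|\E\,\omega_{jk}\,\omega_{jk'}|\leq 1$ outright, so those at most $2^j-1$ pairs contribute $O(2^j)$ after squaring. This is a genuine and welcome simplification: it avoids computing fourth-order differences of $\Lambda_{j,k,k-1}$ altogether, and it makes transparent why nearest-neighbour pairs can never threaten an $O(2^j)$ bound. In fact the same shortcut would streamline the paper's own Lemma \ref{vjkvjk rimep}: the constant $K/m$ obtained there by the explicit computation \eqref{ujkujkprime 2}--\eqref{the estimate} buys nothing over the trivial bound $1$, since either way one gets a $j$-independent constant; the explicit evaluation only yields quantitative information about the size of the nearest-neighbour correlations, which is never used.
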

\begin{proof}
  The proof uses the same calculations as those used in Lemma \ref{vjkvjk rimep}.
\end{proof}
We utilize similar arguments as in the proof of Lemma  \ref{vjk mmoins cp} to obtain
\begin{lem}
For all $j\geq1$ and $k\in \{1,...,2^j\}$, we have
  \begin{equation}
    \E\left[\sum_{k=1}^{2^j}(|\omega_{jk}|^p-c_p)\right]^2\leq (c_{2p}-c_p^2)K 2^j,
  \end{equation}
  where $
    c_p=\frac{1}{\sqrt{2\pi}}\int_{\R}|x|^pe^{-\tfrac{x^2}{2}}dx.$
\end{lem}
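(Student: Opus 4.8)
The plan is to adapt the second-moment argument that was used for the time-variable case (Lemma~\ref{vjk mmoins cp}). The target statement expresses the expected squared fluctuation of the sum $\sum_{k=1}^{2^j}(|\omega_{jk}|^p-c_p)$ around zero; note that each $\omega_{jk}=z_{jk}/\varrho_{jk}$ is a standardized centered Gaussian random variable with $\E[\omega_{jk}^2]=1$, so that $\E[|\omega_{jk}|^p]=c_p$ by the definition of $c_p$ as the $p$-th absolute moment of a standard normal variable. Consequently each summand $|\omega_{jk}|^p-c_p$ is centered, and the whole expression is a double sum of covariances.

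First I would expand the square as
\begin{equation*}
\E\left[\sum_{k=1}^{2^j}(|\omega_{jk}|^p-c_p)\right]^2=\sum_{k,k'=1}^{2^j}\E\left[(|\omega_{jk}|^p-c_p)(|\omega_{jk'}|^p-c_p)\right].
\end{equation*}
Next I would apply Lemma~\ref{cauchy gaussian} with the choice $f(x)=g(x)=|x|^p-c_p$. Since $(\omega_{jk},\omega_{jk'})$ is a mean-zero Gaussian vector with unit variances and correlation $\rho_{k,k'}=|\E[\omega_{jk}\omega_{jk'}]|$, and since $f(\omega_{jk})$, $f(\omega_{jk'})$ are centered with finite second moment $\E[f(\omega_{jk})^2]=c_{2p}-c_p^2$, the lemma yields
\begin{equation*}
\left|\E\left[(|\omega_{jk}|^p-c_p)(|\omega_{jk'}|^p-c_p)\right]\right|\leq |\E[\omega_{jk}\omega_{jk'}]|^2\,(c_{2p}-c_p^2),
\end{equation*}
where I use the even-function refinement of Lemma~\ref{cauchy gaussian}, replacing $\rho$ by $\rho^2$ because $f(x)=|x|^p-c_p$ is even.

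Summing over $k,k'$ gives the bound $(c_{2p}-c_p^2)\sum_{k,k'=1}^{2^j}|\E[\omega_{jk}\omega_{jk'}]|^2$, and the previously established Lemma (inequality~\eqref{wjkwjk prime}) controls this double sum by $K2^j$. Combining these two estimates produces exactly the claimed inequality. The argument is essentially a transcription of the proof of Lemma~\ref{vjk mmoins cp} with $v_{jk}$ replaced by $\omega_{jk}$ and the covariance bound~\eqref{sumEujujprime} replaced by~\eqref{wjkwjk prime}; the only point requiring care is invoking the even-function case of Lemma~\ref{cauchy gaussian}, which is what upgrades $|\E[\omega_{jk}\omega_{jk'}]|$ to its square and makes the summable bound~\eqref{wjkwjk prime} applicable. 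There is no genuine obstacle here, as all the substantive work (the Gaussian correlation decay encoded in~\eqref{wjkwjk prime}) has already been done in the preceding lemmas.
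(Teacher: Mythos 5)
Your proposal is correct and follows exactly the route the paper intends: the paper proves this lemma simply by citing ``similar arguments as in the proof of Lemma~\ref{vjk mmoins cp}'', namely expanding the square into a double sum of covariances, applying Lemma~\ref{cauchy gaussian} with $f(x)=g(x)=|x|^p-c_p$ (using the even-function case to get the factor $\rho^2$), and then invoking the bound \eqref{wjkwjk prime}. Your write-up is a faithful and complete transcription of that argument, including the one point of care (the $\rho^2$ refinement) that makes the squared-correlation bound applicable.
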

\begin{proof}[Proof of Theorem \ref{pricipal x}]
We deduce the proof  by the same arguments as those used in Theorem \ref{pricipal t}.
\end{proof}
As above one can also show the following
\begin{thm}\label{pricipal Besov Orlicz x}
Let ${\mathcal N}(x)= e^{x^2}-1 $. For all $t\in (0,T]$, we have
$$\mathbb{P}(u(t, .)\in \mathcal{B}^{1/2}_{{\mathcal N}} )=1\;\;\text{and}\;\;\mathbb{P}(u(t, .)\in \mathcal{B}^{1/2, 0}_{{\mathcal N}})=0,$$
where $u(t, .)$ is the sample path $x\in [0,1]\longrightarrow u(t,x)$.
\end{thm}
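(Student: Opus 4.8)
The plan is to follow the proof of Theorem~\ref{pricipal Besov Orlicz t}, transporting it from the time variable to the space variable. By the isomorphism of Theorem~\ref{Besov Orlicz} with $\alpha=1/2$, the membership $u(t,.)\in\mathcal{B}^{1/2}_{\mathcal N}$ is equivalent to the almost sure bound $\sup_{j}S_j<\infty$, and the non-membership $u(t,.)\notin\mathcal{B}^{1/2,0}_{\mathcal N}$ to $S_j\not\to 0$, where
$$S_j:=\sup_{p\geq 1}\frac{1}{\sqrt{p}}\,2^{-j/p}\Bigl[\sum_{k=1}^{2^j}|z_{jk}|^p\Bigr]^{1/p}.$$
First I would normalize through \eqref{wjk}, writing $z_{jk}=\varrho_{jk}\omega_{jk}$: by Lemma~\ref{lem estimation zjkzjkprime} the variances satisfy $m_1\leq\varrho_{jk}^2\leq m_2$ uniformly in $j,k$, so $S_j$ is comparable, up to the factors $\sqrt{m_1},\sqrt{m_2}$, to $\widetilde S_j:=\sup_{p\geq 1}\frac{1}{\sqrt p}\bigl[2^{-j}\sum_{k}|\omega_{jk}|^p\bigr]^{1/p}$. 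This normalization is the only place where the spatial argument genuinely departs from the temporal one: here the variances are bounded above and below by constants, whereas in time $\sigma_{jk}^2$ grows like $2^{j/2}$.

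For the upper bound I would split the supremum over $p$ at a threshold $p=\kappa j$ with $0<\kappa<1$; by the log-convexity of $p\mapsto\|\cdot\|_{L^p}$ it suffices, up to a constant, to take the supremum over integer $p$. On the range $1\leq p\leq\kappa j$ I would invoke the spatial second-moment estimate $\E\bigl[\sum_k(|\omega_{jk}|^p-c_p)\bigr]^2\leq(c_{2p}-c_p^2)K2^j$ (obtained from \eqref{wjkwjk prime} and Lemma~\ref{cauchy gaussian} exactly as in Lemma~\ref{vjk mmoins cp}); Chebyshev's inequality then bounds $\mathbb{P}\bigl(2^{-j}\sum_k|\omega_{jk}|^p>2c_p\bigr)$ by a multiple of $2^{-j}c_{2p}/c_p^2$, so that summing over $p\leq\kappa j$ and over $j$ and applying Borel--Cantelli yields $2^{-j}\sum_k|\omega_{jk}|^p\leq 2c_p$ eventually, uniformly in $p$. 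Hence on this range $\frac{1}{\sqrt p}(2c_p)^{1/p}=O(1)$, using the Gaussian moment growth $c_{2p}\leq c\,e^{-p}(2p)^p$ recorded in the proof of Theorem~\ref{pricipal Besov Orlicz t}. On the complementary range $p>\kappa j$ I would use the crude bound $\bigl[\sum_k|\omega_{jk}|^p\bigr]^{1/p}\leq 2^{j/p}\max_k|\omega_{jk}|$, which reduces the corresponding term to $\frac{1}{\sqrt p}\max_k|\omega_{jk}|\leq(\kappa j)^{-1/2}\max_k|\omega_{jk}|$; a union bound over the $2^j$ standard Gaussians $\omega_{jk}$ together with Borel--Cantelli gives $\max_{1\leq k\leq 2^j}|\omega_{jk}|\leq C\sqrt{j}$ eventually, so this term is $O(1)$ as well. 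Combining the two ranges gives $\sup_j S_j<\infty$ almost surely.

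For the lower bound I would simply test $S_j$ against the single exponent $p=2$. Since $c_2=1$ and $\sum_k|z_{jk}|^2\geq m_1\sum_k|\omega_{jk}|^2$, the almost sure convergence $2^{-j}\sum_k|\omega_{jk}|^2\to 1$, established as in \eqref{convergence to} for the spatial coefficients, gives $S_j\geq\frac{1}{\sqrt 2}\bigl(2^{-j}\sum_k|z_{jk}|^2\bigr)^{1/2}\geq\sqrt{m_1/2}\,(1-o(1))$, which is bounded away from $0$; thus $u(t,.)\notin\mathcal{B}^{1/2,0}_{\mathcal N}$ almost surely.

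The main obstacle is the small-$p$ regime of the upper bound: one must check that the Borel--Cantelli series $\sum_j 2^{-j}\sum_{1\leq p\leq\kappa j}c_{2p}/c_p^2$ converges. By Stirling the ratio $c_{2p}/c_p^2$ grows like $2^p$, so the inner sum is of order $2^{\kappa j}$ and the outer sum converges precisely because $\kappa<1$. Balancing this exponential growth of the even Gaussian moments against the geometric factor $2^{-j}$, and matching the two ranges of $p$ at the threshold, is the heart of the estimate and the reason the sharp rate $c_{2p}\sim e^{-p}2^{p+1/2}p^p$ enters.
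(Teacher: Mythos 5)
Your proof is correct and takes essentially the same route as the paper: the paper establishes this theorem by pointing to the argument of \cite[Theorem II.5]{Roynette}, combined with the spatial second-moment estimate $\E\bigl[\sum_{k}(|\omega_{jk}|^p-c_p)\bigr]^2\leq (c_{2p}-c_p^2)K2^j$ and the Gaussian moment growth $c_{2p}\leq c\,e^{-p}(2p)^p$, which are exactly the ingredients you use. The only difference is that you write out explicitly the mechanism the paper delegates to that citation --- the reduction via Theorem \ref{Besov Orlicz}, the normalization by $\varrho_{jk}\in[\sqrt{m_1},\sqrt{m_2}]$, the split of the supremum over $p$ at $p=\kappa j$ with Chebyshev plus Borel--Cantelli below the threshold and the Gaussian maximum bound above it, and the test at $p=2$ for non-membership in $\mathcal{B}^{1/2,0}_{\mathcal N}$ --- all of which check out.
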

\begin{rem}
One can easily deduce a sharp Besov regularity, with respect to space variable, of the following stochastic heat equation with a non-null initial condition:
\begin{equation}\label{EQ1}
    \begin{split}
       \frac{\partial v}{\partial t}(t,x) &= \frac{1}{2}\Delta v(t,x)+\frac{\partial^2}{\partial t\partial x}W(t,x),\quad t>0,\;x\in \R, \\
         v(0,x) &= g(x),\qquad x\in \R,
    \end{split}
\end{equation}
where $g$ is a bounded measurable function. The (mild) solution of (\ref{EQ1}) is given by,
\begin{align}\label{mild solution1}
  v(t,x) &= \int_{\R}G(t,x-y) g(y)\, dy+\int_{0}^{t}\int_{\R}G(t-s,x-y)dW(s,y)\\
         &:= J(t,x)+u(t,x).\nonumber
\end{align}
\begin{itemize}
\item Suppose that  $g\in \mathcal{B}^{\alpha}_{p_0}(\R)$ for some $ 1< p_0<\infty$ and $0<\alpha<1$.
Then one can easily verify that,  for any fixed $t\in (0,T]$, the function  $J(t,\cdot):x\in [0,1]\mapsto J(t,x)$ belongs to $ \mathcal{B}^{\alpha}_{p_0}$. Consequently, standard continuous injections imply that, for any $ 2< p\leq p_0 $,
$\mathbb{P}\left[v(t,.)\in \mathcal{B}^{\frac{1}{2}\wedge \alpha}_{p}\right]=1$ .
\item In addition, if $g\in \mathcal{B}^{\alpha}_{\mathcal N}(\R)$, then
$\mathbb{P}\left[v(t,.)\in \mathcal{B}^{\frac{1}{2}\wedge \alpha}_{\mathcal N}\right]=1 $.
\end{itemize}
\end{rem}
\section{Existence and regularity of local times}
\subsection{Local time of the process $t\to u(t,x)$}
In this section we will consider the process $(u(t,x), \,\, t\in [0,T])$, for some fixed $T>0$ and $x\in\R$.
\subsubsection{Existence of the local time}
We will need the following estimates, which can be easily shown by standard arguments as change of variables and Parseval's equality:
\begin{lem}\label{lemma 1}
For any $x\in \R$ and $s<t$ with $s,t\in [0,T]$,
  \begin{enumerate}
    \item $\int_{0}^{s}\int_{\R}|G(t-r,x-y)-G(s-r,x-y)|^2dy dr\leq C|t-s|^{1/2}$
    \item $\int_{s}^{t}\int_{\R}G^2(t-r,x-y)dy dr=C_1|t-s|^{1/2}$.\label{2}
  \end{enumerate}
\end{lem}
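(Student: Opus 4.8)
The plan is to obtain both estimates directly from Lemma \ref{estimation GG}, which already furnishes the closed form of every space-time integral of a product of two heat kernels that will arise here; once that lemma is invoked, no separate appeal to Parseval's identity is needed, and the whole proof reduces to substituting the correct time arguments and integration limits, followed by one elementary monotonicity argument.

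For part (2), I would apply Lemma \ref{estimation GG} with both time arguments equal to $t$ and with $r_1=s$, $r_2=t$. Since $t\wedge t=t$ and $s\le t$ the hypotheses are met, and the formula gives
$$\int_{s}^{t}\int_{\R}G^2(t-r,x-y)\,dy\,dr=\frac{1}{\sqrt{2\pi}}\left(\sqrt{2t-2s}-\sqrt{2t-2t}\right)=\frac{1}{\sqrt{\pi}}\sqrt{t-s},$$
so the asserted equality holds with $C_1=1/\sqrt{\pi}$.

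For part (1), I would expand the square as $G^2(t-r,\cdot)-2\,G(t-r,\cdot)G(s-r,\cdot)+G^2(s-r,\cdot)$ and integrate each term over $y\in\R$ and $r\in[0,s]$ using Lemma \ref{estimation GG} three times, with time arguments $(t,t)$, $(t,s)$, $(s,s)$ respectively and each with $r_1=0$, $r_2=s$. After collecting the terms this yields
$$\frac{1}{\sqrt{\pi}}\left[\sqrt{t}+\sqrt{s}+(\sqrt{2}-1)\sqrt{t-s}-\sqrt{2}\sqrt{t+s}\right].$$
It then remains to bound this quantity by $C\sqrt{t-s}$. Writing $h=t-s\ge0$, so that $t=s+h$ and $t+s=2s+h$, I would set $g(h):=\sqrt{s+h}+\sqrt{s}-\sqrt{2}\sqrt{2s+h}$ and check that $g(0)=0$ together with $g'(h)\le0$ for $h\ge0$; the latter reduces to the elementary inequality $\tfrac{1}{s+h}\le\tfrac{2}{2s+h}$, equivalently $2s+h\le 2s+2h$. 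Hence $g(h)\le0$, and since the bracket above equals $g(h)+(\sqrt{2}-1)\sqrt{h}$, it is bounded by $(\sqrt{2}-1)\sqrt{h}$. This gives the claimed estimate with $C=(\sqrt{2}-1)/\sqrt{\pi}$.

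The main (and essentially only) obstacle is this last algebraic step in part (1): the leading $\sqrt{t},\sqrt{s},\sqrt{t+s}$ contributions are of order $\sqrt{t}$, not $\sqrt{t-s}$, so the required $|t-s|^{1/2}$ order is invisible term by term and appears only after those three terms are grouped into the monotone function $g$, where they cancel at $h=0$ and decrease thereafter. Everything else is a routine substitution into Lemma \ref{estimation GG}.
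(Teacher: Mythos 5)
Your proof is correct, and it takes a different route from the paper's. The paper dispatches this lemma in one line, citing ``standard arguments as change of variables and Parseval's equality'': i.e., one writes $\int_{\R}|G(t-r,x-y)-G(s-r,x-y)|^2\,dy=\frac{1}{2\pi}\int_{\R}\bigl|e^{-(t-r)u^2/2}-e^{-(s-r)u^2/2}\bigr|^2\,du$ and integrates in $r$ after a scaling, exactly as is done later in the paper for the spatial increments (Lemma \ref{lemma1}). You instead stay entirely in physical space, recycling the heat-kernel convolution identity of Lemma \ref{estimation GG}: this makes part (2) an immediate substitution (with the explicit constant $C_1=1/\sqrt{\pi}$), and for part (1) it even yields an exact closed form for the integral, from which the $|t-s|^{1/2}$ bound follows by your monotonicity argument on $g(h)=\sqrt{s+h}+\sqrt{s}-\sqrt{2}\sqrt{2s+h}$ (which is correct: $g(0)=0$ and $g'\le 0$ since $2s+h\le 2s+2h$), giving the explicit constant $C=(\sqrt{2}-1)/\sqrt{\pi}$. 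What each approach buys: yours is self-contained given the earlier lemma, avoids Fourier analysis, and produces sharp explicit constants, at the cost of the algebraic cancellation step you rightly identify as the crux (the individual terms are $O(\sqrt{t})$, not $O(\sqrt{t-s})$); the Parseval route avoids any such cancellation because the Fourier integrand can be bounded termwise, and it generalizes to equations where no closed-form real-space convolution exists (e.g., colored noise). One cosmetic point: Lemma \ref{estimation GG} is stated for $t,s\in[0,1]$ while here $s,t\in[0,T]$; its proof (elementary changes of variables) is insensitive to this restriction, but you should say explicitly that you are using it on $[0,T]$.
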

\begin{prop}\label{prop 2}
  For any $x\in \R$ and $0\leq p<3$, we have
  $$\int_{0}^{T}\int_{0}^{T}[\E(u(t,x)-u(s,x))^2]^{-(p+1)/2}dsdt<\infty\,.$$
\end{prop}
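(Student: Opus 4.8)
The plan is to control the variance $\E(u(t,x)-u(s,x))^2$ from below by a power of $|t-s|$ and then reduce the double integral to an elementary one-dimensional integral. First I would note that by the Wiener-isometry the increment $u(t,x)-u(s,x)$ (say with $s<t$) splits, exactly as in the decomposition used for $u_{jk}$, into two \emph{independent} contributions: the part coming from the time interval $[0,s]$, where the kernel is $G(t-r,x-y)-G(s-r,x-y)$, and the part coming from $[s,t]$, where the kernel is $G(t-r,x-y)$. By independence these variances add, so in particular
\begin{equation*}
\E(u(t,x)-u(s,x))^2 \;\geq\; \int_{s}^{t}\int_{\R}G^2(t-r,x-y)\,dy\,dr \;=\; C_1|t-s|^{1/2},
\end{equation*}
where the last equality is exactly part \eqref{2} of Lemma \ref{lemma 1}. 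This is the crux: dropping the nonnegative $[0,s]$-term gives the clean lower bound $\E(u(t,x)-u(s,x))^2 \geq C_1|t-s|^{1/2}$, which is precisely the type of inequality flagged in the Remark following Theorem \ref{thmberman} (here with $\beta=1/2$).

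With this lower bound in hand, I would substitute it into the integrand. Since $-(p+1)/2<0$, the bound $\E(u(t,x)-u(s,x))^2\geq C_1|t-s|^{1/2}$ yields
\begin{equation*}
[\E(u(t,x)-u(s,x))^2]^{-(p+1)/2} \;\leq\; C_1^{-(p+1)/2}\,|t-s|^{-(p+1)/4}.
\end{equation*}
It then remains to show that $\int_0^T\int_0^T |t-s|^{-(p+1)/4}\,ds\,dt<\infty$. By symmetry and the change of variable this is a constant multiple of $\int_0^T r^{-(p+1)/4}\,dr$, which converges precisely when the exponent satisfies $(p+1)/4<1$, i.e. $p<3$. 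This is exactly the range assumed in the statement, so the integral is finite and the proposition follows.

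The main (and only real) obstacle is justifying the independence-based lower bound rigorously; everything after that is a one-line power-counting argument. The key point to verify carefully is that the two stochastic integrals over $[0,s]\times\R$ and $[s,t]\times\R$ are genuinely independent, which holds because white noise assigns independent Gaussian masses to disjoint time-space regions, so their covariance vanishes and the total variance is the sum of the two nonnegative pieces. I do not expect to need the upper estimate (1) of Lemma \ref{lemma 1} for this proposition, since only a lower bound on the variance is required; estimate (1) would enter if one instead wanted two-sided control, but for Berman's integrability criterion \eqref{(p+1)/2} the lower bound alone suffices.
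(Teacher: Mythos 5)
Your proposal is correct and follows exactly the paper's own argument: split the increment into the independent stochastic integrals over $[0,s]\times\R$ and $[s,t]\times\R$, drop the nonnegative $[0,s]$-term, and invoke point (2) of Lemma \ref{lemma 1} to get the lower bound $\E(u(t,x)-u(s,x))^2\geq C_1|t-s|^{1/2}$. The only difference is that you spell out the final power-counting step ($(p+1)/4<1$ iff $p<3$), which the paper leaves implicit.
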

\begin{proof}
  We have for $s,t\in [0,T]$, such that $s<t$,
  \begin{align*}
    &\E(u(t,x)-u(s,x))^2 \\
    &= \E\left|\int_{0}^{s}\int_{\R}(G(t-\tau,x-y)-G(s-\tau,x-y))dW(\tau,y)+\int_{s}^{t}\int_{\R}G(t-\tau,x-y)dW(\tau,y)\right|^2 \,.
\end{align*}
Independence argument together with the point \eqref{2} of Lemma \ref{lemma 1} give
  \begin{align*}
     &\E(u(t,x)-u(s,x))^2 \\
     & = \int_{0}^{s}\int_{\R}(G(t-\tau,x-y)-G(s-\tau,x-y))^2dy d\tau+\int_{s}^{t}\int_{\R}G^2(t-\tau,x-y)dy d\tau\\
     & \geq \int_{s}^{t}\int_{\R}G^2(t-\tau,x-y)dy d\tau\geq C_1\sqrt{(t-s)}\,,
  \end{align*}
where $C_1$ is a positive constant. This ends the proof of Proposition \ref{prop 2}.
\end{proof}
\begin{thm}\label{thm 1}
  Let $x\in \R$ be fixed, then
  \begin{enumerate}
  \item There exists a square integrable version of the local time of $(u(t,x),\;t\in [0,T])$.
  \item  The process $(u(t,x),\;t\in [0,T])$ satisfies the LND property i.e., formula \eqref{LND}.
\end{enumerate}
\end{thm}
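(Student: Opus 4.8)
For part (1), the existence of a square integrable local time is an immediate application of Berman's criterion stated in the preliminaries to the centred Gaussian process $(u(t,x),\,t\in[0,T])$. It suffices to verify that
$$\int_0^T\int_0^T\left[\E(u(t,x)-u(s,x))^2\right]^{-1/2}\,ds\,dt<\infty,$$
which is precisely the content of Proposition \ref{prop 2} in the case $p=0$. Hence the trajectory $t\mapsto u(t,x)$ admits a local time $L(\xi,B)$ that is square integrable in $\xi$, proving the first assertion.

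For part (2), the plan is to exploit the Wiener integral representation \eqref{mild solution} together with the independence of white noise over disjoint time strips. Fix $r<s<t$ and write, exactly as in the proof of Proposition \ref{prop 2},
$$u(t,x)-u(s,x)=A+B,$$
where $A=\int_0^s\int_\R\big(G(t-\tau,x-y)-G(s-\tau,x-y)\big)\,dW(\tau,y)$ is measurable with respect to the noise on $[0,s]\times\R$, and $B=\int_s^t\int_\R G(t-\tau,x-y)\,dW(\tau,y)$ depends only on the noise on $(s,t]\times\R$. The decisive observation is that every conditioning variable $u(\tau,x)$ with $\tau\le s$ is measurable with respect to the noise on $[0,s]\times\R$, since its defining integral ranges over $\sigma\in[0,\tau]\subseteq[0,s]$. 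Therefore $B$ is independent of $A$ and of the entire family $\{u(\tau,x):r\le\tau\le s\}$, so conditioning leaves the variance of $B$ untouched and
$$\var\big(u(t,x)-u(s,x)\mid u(\tau,x),\,r\le\tau\le s\big)\ge \var(B)=\int_s^t\int_\R G^2(t-\tau,x-y)\,dy\,d\tau=C_1\sqrt{t-s},$$
the last equality being point \eqref{2} of Lemma \ref{lemma 1}.

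It remains to compare this with the unconditional variance. By the orthogonal decomposition already used in Proposition \ref{prop 2} together with the two estimates of Lemma \ref{lemma 1}, one has $\var(u(t,x)-u(s,x))\le (C+C_1)\sqrt{t-s}$. Combining the two bounds yields, uniformly in $r<s<t$,
$$\frac{\var\big(u(t,x)-u(s,x)\mid u(\tau,x),\,r\le\tau\le s\big)}{\var(u(t,x)-u(s,x))}\ge\frac{C_1}{C+C_1}>0,$$
and since this lower bound does not depend on $r,s,t$, taking the infimum over $0\le t-r\le c$ and letting $c\to0$ gives exactly the LND property \eqref{LND}. The only delicate step is the conditional-variance lower bound: one must justify that the fresh increment $B$ is orthogonal to the Gaussian space spanned by the past values $\{u(\tau,x):r\le\tau\le s\}$, which is guaranteed by the independence of white noise on disjoint time intervals and prevents the conditional variance from falling below $\var(B)$. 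Everything else reduces to a direct substitution of the kernel estimates of Lemma \ref{lemma 1}.
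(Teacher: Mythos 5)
Your proposal is correct and follows essentially the same route as the paper: part (1) is Berman's criterion combined with Proposition \ref{prop 2}, and part (2) rests on the decomposition $u(t,x)-u(s,x)=A+B$ with the fresh-noise term $B$ independent of the past, together with the two estimates of Lemma \ref{lemma 1}. The only cosmetic difference is that the paper passes through the larger $\sigma$-algebra $\mathcal{F}^W_s$ (using $\sigma(u(\tau,x),\,r\leq\tau\leq s)\subset\mathcal{F}^W_s$) and computes $\var\big(u(t,x)-u(s,x)\mid\mathcal{F}^W_s\big)$ exactly, whereas you bound the conditional variance from below directly by $\var(B)$ via the independence of $B$ from both $A$ and the conditioning family, which is a valid shortcut to the same bound.
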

\begin{proof}
    The existence of a square integrable version of the local time of $u(t,x)$ is a consequence of Berman's
theory (cf. Theorem \ref{thmberman}) and Proposition \ref{prop 2}. We will
denote this version by $(L(\xi,t),\; t \geq 0,\; \xi\in \R)$.

Let us now prove that $(u(t,x),\;t\in [0,T])$ satisfies the LND condition. So we have to prove,
\begin{equation}\label{LND u(t,x)}
  \lim_{c\to 0}\inf_{0\leq t-r\leq c,\;r<s<t}\frac{\var(u(t,x)-u(s,x)|u(\tau,x),\;r\leq \tau \leq s)}{\var(u(t,x)-u(s,x))}>0.
\end{equation}
First, remark that we have the following inclusion of $\sigma$-algebras
$$\sigma(u(\tau,x),\;r\leq \tau \leq s)\subset \mathcal{F}^W_s,$$
where we have noted $\mathcal{F}^W_s=\sigma((W(r,y),\;0\leq r\leq s,\;y\in \R)\vee\mathcal{N}).$ We then get
\begin{equation}\label{var var}
  \frac{\var(u(t,x)-u(s,x)|u(\tau,x),\;r\leq \tau \leq s)}{\var(u(t,x)-u(s,x))}\geq \frac{\var(u(t,x)-u(s,x)|\mathcal{F}^W_s)}{\var(u(t,x)-u(s,x))}.
\end{equation}
Now
\begin{equation}\label{var=var}
\begin{split}
   &\var(u(t,x)-u(s,x)|W(\tau,y),\;0\leq \tau \leq s,\;y\in \R)\\
   &= \var(\int_{0}^{s}\int_{\R}(G(t-\tau,x-y)-G(s-\tau,x-y))dW(\tau,y) \\
   &\qquad\qquad\qquad+\int_{s}^{t}\int_{\R}G(t-\tau,x-y)dW(\tau,y)|\mathcal{F}^W_s).
\end{split}
\end{equation}
But since $\int_{0}^{s}\int_{\R}(G(t-\tau,x-y)-G(s-\tau,x-y))dW(\tau,y)$ is $\mathcal{F}^W_s-$measurable and $\int_{s}^{t}\int_{\R}G(t-\tau,x-y)dW(\tau,y)$ is independent of $\mathcal{F}^W_s$, we have
\begin{equation}\label{varCon=}
  \var(u(t,x)-u(s,x)|W(\tau,y),\;0\leq \tau \leq s,\;y\in \R)=\int_{s}^{t}\int_{\R}G^2(t-\tau,x-y)dy d\tau.
\end{equation}
On the other hand
\begin{equation}\label{var=}
\begin{split}
  \var(u(t,x)-u(s,x))=\int_{0}^{s}\int_{\R}&(G(t-\tau,x-y)-G(s-\tau,x-y))^2dy d\tau\\
  &+\int_{s}^{t}\int_{\R}G^2(t-\tau,x-y)dy d\tau\\
  :=A(s,t).
  \end{split}
\end{equation}
Combining \eqref{var var}, \eqref{var=var}, \eqref{varCon=} and \eqref{var=} we obtain
\begin{align*}
   & \lim_{c\to 0}\inf_{0\leq t-r\leq c,\;r<s<t}\frac{\var(u(t,x)-u(s,x)|u(\tau,x),\;r\leq \tau \leq s)}{\var(u(t,x)-u(s,x))}  \\
  & \geq \lim_{c\to 0}\inf_{0\leq t-r\leq c,\;r<s<t} \frac{\int_{s}^{t}\int_{\R}G^2(t-\tau,x-y)dy d\tau}{A(s,t)}.
\end{align*}
Remark that
\begin{align*}
  &\quad\;\lim_{c\to 0}\inf_{0\leq t-r\leq c,\;r<s<t} \frac{\int_{s}^{t}\int_{\R}G^2(t-\tau,x-y)dy d\tau}{A(s,t)}>0\\
  &\Leftrightarrow \lim_{c\to 0}\inf_{0\leq t-r\leq c,\;r<s<t} \frac{\int_{s}^{t}\int_{\R}G^2(t-\tau,x-y)dy d\tau}{\int_{0}^{s}\int_{\R}(G(t-\tau,x-y)-G(s-\tau,x-y))^2 dy d\tau}>0.
\end{align*}
The last property of $G$ is assured by Lemma \ref{lemma 1}, which ends the proof of Theorem \ref{thm 1}.
\end{proof}
\begin{prop}\label{prop 4}
  For all $x,y\in \R$, $t,t+h\in (0,T]$ and for any even positive integer $n$, there exists $C_n>0$ such that
  \begin{align*}
    \E[L(y,t+h)-L(x,t+h)-L(y,t)+L(x,t)]^n &\leq C_n|x-y|^{\zeta n}|h|^{n(3/4-\zeta/4)}, \\
    \E[L(x,t+h)-L(x,t)]^n &\leq C_n |h|^{3n/4},
  \end{align*}
  where $0<\zeta<1$.
\end{prop}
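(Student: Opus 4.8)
The plan is to combine the two Fourier moment identities for the increments of the local time recalled in Section 1 with the Gaussian identity $\E[\exp(i\sum_{j=1}^n u_j u(t_j,x))]=\exp(-\tfrac12\var(\sum_{j=1}^n u_j u(t_j,x)))$ and the LND property of $(u(t,x),\,t\in[0,T])$ proved in Theorem \ref{thm 1}. Throughout I write $t_0:=t$ and $\sigma_j^2:=\var(u(t_j,x)-u(t_{j-1},x))$, and recall from Lemma \ref{lemma 1} (equivalently, from the bifractional structure with $HK=1/4$) the comparison $\sigma_j^2\asymp(t_j-t_{j-1})^{1/2}$; only the lower bound $\sigma_j^2\geq c\,(t_j-t_{j-1})^{1/2}$ will be used.

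For the second (diagonal) estimate I would start from the identity for $\E[L(x,t+h)-L(x,t)]^n$. Since the integrand is symmetric in $(t_1,\dots,t_n)$, restrict the time integration to the simplex $t<t_1<\cdots<t_n<t+h$ at the cost of a factor $n!$, and apply the volume-preserving substitution $u_j=v_j-v_{j+1}$ (with $v_{n+1}:=0$, i.e.\ $v_j=\sum_{l=j}^{n}u_l$). By the LND property and the Berman-type lower bound recalled just before \eqref{LND},
$$\var\Big(\sum_{j=1}^n u_j\,u(t_j,x)\Big)\ \geq\ C_m\sum_{j=1}^n v_j^2\,\sigma_j^2,$$
so that the characteristic function is bounded by $\exp(-\tfrac{C_m}{2}\sum_j v_j^2\sigma_j^2)$. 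Integrating over $u\in\R^n$ — equivalently over $v\in\R^n$ — yields $\prod_j C\sigma_j^{-1}\leq C^n\prod_j (t_j-t_{j-1})^{-1/4}$, and the residual integration over the simplex, via the scaling $t_j=t+h s_j$, produces $h^{\,n-n/4}=h^{3n/4}$ times a finite Dirichlet integral on the unit simplex. This gives the diagonal bound.

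For the first (off-diagonal) estimate I would run the same argument while retaining the factor $\prod_{j}\big(e^{-iy u_j}-e^{-ix u_j}\big)$, where $y=x+k$. Using $|e^{i\theta}-1|\leq 2^{1-\zeta}|\theta|^\zeta$ for $0\leq\zeta\leq1$, this factor is dominated by $C^n|x-y|^{n\zeta}\prod_j|u_j|^\zeta$. After the substitution, subadditivity $(a+b)^\zeta\leq a^\zeta+b^\zeta$ gives $|u_j|^\zeta=|v_j-v_{j+1}|^\zeta\leq|v_j|^\zeta+|v_{j+1}|^\zeta$, so $\prod_j|u_j|^\zeta$ expands into a sum of at most $2^n$ products $\prod_m|v_m|^{a_m}$ with exponents $a_m\in\{0,\zeta,2\zeta\}$ satisfying $\sum_m a_m=n\zeta$. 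Integrating each product against the Gaussian bound gives $\prod_m C\,\sigma_m^{-(1+a_m)}\leq C^n\prod_m(t_m-t_{m-1})^{-(1+a_m)/4}$, and the simplex integration with $t_j=t+h s_j$ now produces the power
$$n-\frac14\sum_{m=1}^n(1+a_m)=n-\frac{n+n\zeta}{4}=n\Big(\frac34-\frac{\zeta}{4}\Big),$$
which, together with the prefactor $|x-y|^{n\zeta}$ and finite unit-simplex Dirichlet integrals, is exactly the first estimate.

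The main obstacle is the combinatorial bookkeeping in the off-diagonal case: one must verify that every term obtained by expanding $\prod_j(|v_j|^\zeta+|v_{j+1}|^\zeta)$ carries total $v$-exponent exactly $n\zeta$, so that the powers of $h$ add up to $n(3/4-\zeta/4)$, and that each Dirichlet integral $\int_{0<s_1<\cdots<s_n<1}\prod_m(s_m-s_{m-1})^{-(1+a_m)/4}\,d\overline{s}$ converges uniformly over the finitely many exponent patterns. This convergence is exactly where $\zeta<1$ enters, since then each exponent satisfies $(1+a_m)/4\leq(1+2\zeta)/4<1$. A secondary point is ensuring that the LND constant $C_m$ and the comparison $\sigma_j^2\asymp(t_j-t_{j-1})^{1/2}$ hold uniformly for ordered times in $[t,t+h]\subset[0,T]$ with $h$ small, which is guaranteed by Theorem \ref{thm 1} and Lemma \ref{lemma 1}.
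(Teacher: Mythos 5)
Your proposal is correct and follows essentially the same route as the paper's proof: the Geman--Horowitz Fourier moment identity, restriction to the ordered simplex with the substitution $u_j=v_j-v_{j+1}$, the LND lower bound from Theorem \ref{thm 1}, the subadditivity expansion of $\prod_j|v_j-v_{j+1}|^{\zeta}$ into products $\prod_m|v_m|^{\epsilon_m\zeta}$ with $\sum_m\epsilon_m=n$, the variance lower bound $\sigma_j^2\geq C(t_j-t_{j-1})^{1/2}$ from Lemma \ref{lemma 1}, and a Dirichlet-type simplex integral yielding the power $n(3/4-\zeta/4)$. The only cosmetic difference is that you extract the power of $h$ by the scaling $t_j=t+hs_j$, whereas the paper invokes the exact Dirichlet formula with Gamma factors; both handle the convergence and bookkeeping identically.
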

\begin{proof}
  We prove just the first inequality; the second one follows the same lines. For simplicity of notations we use $X_t$ to denote the process $(u(t,x)\,,\,\, t\in[0,T])$. We consider only $h>0$ such that $t+h\in [0,T]$ the other case follows the same way.  Following \cite{GemanHoroviz} or \cite{Dozzi} we have
\begin{align*}
  & \E[L(y,t+h)-L(x,t+h)-L(y,t)+L(x,t)]^n \\
  & =(2\pi)^{-n}\int_{[t,t+h]^n}\int_{\R^{n}}\prod_{j=1}^{n}[e^{-iyu_j}-e^{-ixu_j}]\E[e^{i\sum_{j=1}^{n}u_jX_{t_j}}]d\overline{u}d\overline{t}.
\end{align*}
The elementary inequality $|1-e^{i\theta}|\leq 2^{1-\zeta}|\theta|^{\zeta}$ for any $0<\zeta<1$ and $\theta\in \R$, leads to
\begin{equation}\label{E J()}
  \E[L(y,t+h)-L(x,t+h)-L(y,t)+L(x,t)]^n\leq 2^{-n\zeta}\pi^{-n}|y-x|^{n\zeta}\, T(n,\zeta),
\end{equation}
where
$$T(n,\zeta)=\int_{[t,t+h]^n}\int_{\R^{n}}\prod_{j=1}^{n}|u_j|^{\zeta}\E[e^{i\sum_{j=1}^{n}u_j X_{t_j}}]d\overline{u}d\overline{t}.$$
In order to apply the LND property for the Gaussian process $X_t$, we do two transformations:
\begin{itemize}
  \item We replace the integration over the domain $[t,t+h]^n$ by the integration over the subset $t<t_1<t_2...<t_n<t+h$.
  \item In the integral over the u's, we change the variable of integration by the following transformation
        $$u_n=v_n,\quad u_j=v_j-v_{j+1},\quad j=1,...,n-1.$$
\end{itemize}
We obtain
\begin{align}\label{J() m!}
  &T(n,\zeta)=n!\int_{t<t_1<t_2...<t_n<t+h}\int_{\R^n}\prod_{j=1}^{n-1}|v_j-v_{j+1}|^{\zeta}|v_n|^{\zeta}\E[e^{i\sum_{j=1}^{n}v_j(X_{t_j}-X_{t_{j-1}})}]d\overline{v}d\overline{t}\nonumber\\
 &= n!\int_{t<t_1<t_2...<t_n<t+h}\int_{\R^n}\prod_{j=1}^{n-1}|v_j-v_{j+1}|^{\zeta}|v_n|^{\zeta}\, e^{-\frac{1}{2}var(\sum_{j=1}^{n}v_j(X_{t_j}-X_{t_{j-1}}))}\, d\overline{v}d\overline{t},
\end{align}
where $t_0=0.$
Now, since $|a-b|^{\zeta}\leq |a|^{\zeta}+|b|^{\zeta}$ for all $0<\zeta<1$, it follows that
\begin{equation}\label{||}
  \prod_{j=1}^{n-1}|v_j-v_{j+1}|^{\zeta}|v_n|^{\zeta}\leq \prod_{j=1}^{n-1}(|v_j|^{\zeta}+|v_{j+1}|^{\zeta})|v_n|^{\zeta}.
\end{equation}
Note that the last term in the right is at most equal to a finite sum of terms each of the form $\prod_{j=1}^{n}|v_j|^{\epsilon_j\zeta}$,
where $\epsilon_j=0,1,$ or $2$ and $\sum_{j=1}^{n}\epsilon_j=n$. Let us write for simplicity $\sigma^2(j) = \E(X_{t_j}- X_{t_{j-1}})^2$. Using \eqref{||} and the LND property of $X_t$, i.e. the second point in Theorem \ref{thm 1}, the term $T(n,\zeta)$ in \eqref{J() m!} is dominated by
the sum over all possible choice of $(\epsilon_1,...,\epsilon_n)\in\{0,1,2\}^n$ of the following terms
\begin{equation}\label{use of LND}
  \int_{t<t_1<t_2...<t_m<t+h}\int_{\R^n}\prod_{j=1}^{n}|v_j|^{\epsilon_j\zeta}\exp\left(-\frac{C_n}{2}\sum_{j=1}^{n}v_j^2\sigma^2(j)\right)d\overline{v}d\overline{t},
\end{equation}
where $C_n$ is a positive constant and $h$ is small enough such that
$0<h<\delta_n$, ($\delta_n$ and $C_n$ are given by the LND property). Now, by the change of variable $x_j=\sigma(j)v_j$,
the term \eqref{use of LND} becomes
\begin{equation}\label{change variable}
  \int_{t<t_1<t_2...<t_m<t+h}\prod_{j=1}^{n}\sigma(j)^{-1-\zeta \epsilon_j}\int_{\R^n}\prod_{j=1}^{n}|x_j|^{\epsilon_j \zeta}\exp\left(-\frac{C_n}{2}\sum_{j=1}^{n}x_j^2\right)d\overline{x}d\overline{t}.
\end{equation}
Using the second point in Lemma \ref{lemma 1}, we have $\sigma^2(j)=\E(X_{t_j}-X_{t_{j-1}})^2 \geq C(t_j-t_{j-1})^{1/2}$, where $C$ is a positive constant. This implies that the integral in \eqref{change variable} is dominated by
\begin{equation}\label{|tj-tj-1|}
  \begin{split}
      \int_{t<t_1<t_2...<t_m<t+h}&\prod_{j=1}^{n}|t_j-t_{j-1}|^{\frac{-1-\zeta\epsilon_j}{4}}\int_{\R^n}\prod_{j=1}^{n}|x_j|^{\epsilon_j \zeta}\exp\left(-\frac{C_n}{2}\sum_{j=1}^{n}x_j^2\right)d\overline{x}d\overline{t}  \\
       &= C(n,\zeta)\int_{t<t_1<t_2...<t_m<t+h}\prod_{j=1}^{n}|t_j-t_{j-1}|^{\frac{-1-\zeta\epsilon_j}{4}}d\overline{t}.
  \end{split}
\end{equation}
Now, return to Equation \eqref{E J()}. Combining \eqref{J() m!}, \eqref{use of LND}
and \eqref{|tj-tj-1|} we obtain
\begin{equation}\label{E C(n,xi)}
  \begin{split}
      & \E[L(y,t+h)-L(x,t+h)-L(y,t)+L(x,t)]^n  \\
       & \leq C(n, \zeta)|y-x|^{n\zeta}\int_{t<t_1<t_2...<t_m<t+h}\prod_{j=1}^{n}|t_j-t_{j-1}|^{\frac{-1-\zeta \epsilon_j}{4}}d\overline{t}.
  \end{split}
\end{equation}
Remark that the integral in the right hand side of \eqref{E C(n,xi)} is finite. Moreover, by using an elementary
calculations, we have for any $n \geq 1$, $h > 0$ and $b_j < 1$
$$\int_{t<t_1<t_2...<t_m<t+h}\prod_{j=1}^{n}|t_j-t_{j-1}|^{-b_j}d\overline{t}=h^{n-\sum_{j=1}^{n}b_j}\frac{\prod_{j=1}^{n}\Gamma(1-b_j)}{\Gamma(1+n-\sum_{j=1}^{n}b_j)}.$$
Finally, taking $b_j=\frac{1+\zeta\epsilon_j}{4}$ , we get
\begin{equation}\label{fin}
  \E[L(y,t+h)-L(x,t+h)-L(y,t)+L(x,t)]^n\leq C(n,\zeta)|y-x|^{n\zeta}h^{n(\frac{3}{4}-\frac{\zeta}{4})}.
\end{equation}
\end{proof}
We can deduce by classical arguments (cf. D. Geman-J. Horowitz \cite[Theorem 26.1]{GemanHoroviz} or Berman \cite[Theorem 8.1.]{Berman2}) the following regularity result for the local time of the solution
$(u(t,x),\,\,\, t\in [0,T])$
\begin{thm}\label{joint continuity}
  For any $x\in \R$, the solution $(u(t,x),\;t\in [0,T])$ has almost surely a jointly continuous local time $(L(\xi,t),\; t\in[0,T],\;\xi\in \R)$ which satisfies for all $\alpha < 3/4$,
\begin{equation}\label{regulalirity L time}
  \sup_{\xi}|L(\xi,t+h)-L(\xi,t)|\leq \eta'h^{\alpha},
\end{equation}
for any $t,t+h\in [0,T]$ such that $|h|<\eta$, where $\eta$ and $\eta'$ are random variables a.s. positive and finite.
\end{thm}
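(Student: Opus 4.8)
The plan is to derive the theorem from the moment estimates of Proposition \ref{prop 4} by the classical two-parameter continuity machinery for local times (cf. Geman--Horowitz \cite[Theorem 26.1]{GemanHoroviz} and Berman \cite[Theorem 8.1]{Berman2}). First I would establish a jointly continuous version of $(L(\xi,t),\,t\in[0,T],\,\xi\in\R)$. The first inequality in Proposition \ref{prop 4} bounds the mixed rectangular increment of $L$ regarded as a function of the pair $(\xi,t)$, while the second bounds the pure temporal increment; feeding these into a multiparameter Kolmogorov-type continuity theorem on a compact $\xi$-interval produces a version of $L$ that is jointly continuous and H\"{o}lder of any order less than $\zeta$ in $\xi$ and of any order less than $3/4-\zeta/4$ in $t$, for each admissible $\zeta\in(0,1)$.

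For the uniform-in-$\xi$ temporal modulus \eqref{regulalirity L time}, I would run a dyadic chaining argument. Fix a compact interval $I\subset\R$ carrying the (a.s. bounded) range of $u(\cdot,x)$ on $[0,T]$, so that $L(\xi,\cdot)$ vanishes for $\xi\notin I$. For integers $m,k\ge1$, Markov's inequality applied to the first estimate of Proposition \ref{prop 4} gives, for dyadic neighbours $x,y\in I$ with $|x-y|=2^{-m}$ and $h=2^{-k}$,
\begin{equation*}
  \mathbb{P}\Big(|L(y,t+h)-L(x,t+h)-L(y,t)+L(x,t)|>\lambda\Big)\le \frac{C_n\,2^{-m\zeta n}\,2^{-kn(3/4-\zeta/4)}}{\lambda^{n}},
\end{equation*}
and the second estimate controls the pure temporal increments by $C_n\,2^{-3kn/4}/\lambda^{n}$. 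Choosing $\lambda=2^{-k\alpha}$ with $\alpha<3/4-\zeta/4$, taking $n$ large, and summing over the dyadic points of $I$ and of $[0,T]$, the Borel--Cantelli lemma yields an a.s. finite random constant bounding all dyadic rectangular and temporal increments simultaneously.

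The main obstacle is to pass from dyadic points to the supremum over all $\xi$ without degrading the temporal exponent. Telescoping $L(\xi,t+h)-L(\xi,t)$ along the dyadic approximants of $\xi$, the supremum over $\xi$ is dominated by a pure temporal increment at a coarse dyadic point plus $\sum_{m\ge1}$ of the level-$m$ rectangular increments. The union bound over the $2^m$ dyadic points of level $m$ costs a factor $2^m$, which is absorbed by the spatial decay $2^{-m\zeta n}$ as soon as $\zeta n>1$, i.e. once $n$ is taken large; the surviving geometric series $\sum_m 2^{-m\zeta}$ converges for every $\zeta>0$, while the temporal factor remains of order $|h|^{3/4-\zeta/4}$. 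Letting $\zeta\downarrow0$ then pushes the temporal exponent up to $3/4$, which gives \eqref{regulalirity L time} for every $\alpha<3/4$, with $\eta,\eta'$ a.s. positive and finite. The two points requiring care are the slow growth of the constants $C_n$ in $n$ (to legitimize the limit $n\to\infty$) and the a.s. boundedness of the range of $u(\cdot,x)$, which follows from the a.s. continuity of $t\mapsto u(t,x)$ on the compact $[0,T]$ (a consequence of the regularity obtained in Theorem \ref{pricipal t}).
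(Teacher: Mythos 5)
Your proposal is correct and is essentially the paper's own proof: the paper deduces Theorem \ref{joint continuity} from the moment estimates of Proposition \ref{prop 4} by invoking the classical arguments of Geman--Horowitz \cite[Theorem 26.1]{GemanHoroviz} and Berman \cite[Theorem 8.1]{Berman2}, which is precisely the Kolmogorov-type continuity, dyadic chaining and Borel--Cantelli machinery you spell out (including the use of the compact range of $t\mapsto u(t,x)$ to reduce to a compact $\xi$-interval). The only superfluous concern is your remark on the growth of the constants $C_n$: no limit $n\to\infty$ is taken in this argument, since for each fixed $\alpha<3/4$ one chooses $\zeta$ small enough that $\alpha<3/4-\zeta/4$ and then a single sufficiently large $n$ with $\zeta n>1$, for which $C_n<\infty$ is all that is needed.
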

We also obtain by Proposition \ref{prop 4} together with a version of Kolmogorov's continuity theorem in Besov norms (see Boufoussi et al. \cite[Lemma 2.1.]{BLD}), that
\begin{thm}\label{LocalBesov}
  For all $\lambda>0$, $p>\frac{1}{\lambda}$ and $\xi\in\R$,
  $$\mathbb{P}\left(L(\xi,.)\in \mathcal{B}^{\omega_{\lambda}}_p\right)=1,$$
  where $\omega_{\lambda}(t)=t^{3/4}(\log(1/t))^{\lambda}$ and $L(\xi,.)$ is the sample paths $t\to L(\xi,t)$, $t\in[0,1]$ .
\end{thm}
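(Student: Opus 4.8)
The plan is to deduce the result directly from the temporal moment bound of Proposition \ref{prop 4} and the sequential description of $\mathcal{B}^{\omega_\lambda}_p$ furnished by Theorem \ref{Besov}; this is exactly the mechanism behind the Besov-type Kolmogorov criterion of \cite[Lemma 2.1]{BLD}, which one may alternatively invoke as a black box. Fix $\xi\in\R$ and apply the coefficient scheme of Theorem \ref{Besov} to the sample path $t\mapsto L(\xi,t)$, setting
$$L_{jk}=2\cdot 2^{j/2}\left\{L\left(\xi,\tfrac{2k-1}{2^{j+1}}\right)-\tfrac12 L\left(\xi,\tfrac{2k}{2^{j+1}}\right)-\tfrac12 L\left(\xi,\tfrac{2k-2}{2^{j+1}}\right)\right\},$$
together with $L_0=L(\xi,0)=0$ and $L_1=L(\xi,1)$. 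Since $L(\xi,\cdot)$ admits an a.s. finite, square-integrable local time (Theorem \ref{thm 1}), the terms $L_0,L_1$ are a.s. finite, and by Theorem \ref{Besov} it suffices to prove that
$$\sup_{j\ge1} A_j<\infty\quad\text{a.s.},\qquad A_j:=\left(\frac{2^{-j(1/2+1/p)}}{\omega_\lambda(2^{-j})}\right)^p\sum_{k=1}^{2^j}|L_{jk}|^p.$$

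Each coefficient $L_{jk}$ is, up to the factor $2^{j/2}$, a second-order increment of $L(\xi,\cdot)$ assembled from two first-order increments over time steps of length $2^{-(j+1)}$, so $|L_{jk}|\le 2^{j/2}(|\delta_1|+|\delta_2|)$ with $\delta_i$ increments of $L(\xi,\cdot)$ on intervals of length $2^{-(j+1)}$. Choosing an even integer $n\ge p$ and combining Jensen's inequality with the second estimate of Proposition \ref{prop 4}, one gets $\E|\delta_i|^p\le (\E|\delta_i|^n)^{p/n}\le C(2^{-j})^{3p/4}$, whence
$$\E|L_{jk}|^p\le C\,2^{jp/2}(2^{-j})^{3p/4}=C\,2^{-jp/4},\qquad \E\sum_{k=1}^{2^j}|L_{jk}|^p\le C\,2^{j(1-p/4)}.$$
Plugging $\omega_\lambda(2^{-j})^p=2^{-3jp/4}(j\log2)^{\lambda p}$ into the definition of $A_j$, the powers of $2$ cancel exactly and we are left with
$$\E A_j\le \frac{C}{(j\log 2)^{\lambda p}}\le \frac{C'}{j^{\lambda p}}.$$

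The hypothesis $p>1/\lambda$ is precisely what makes $\sum_{j\ge1}\E A_j\le C'\sum_{j\ge1}j^{-\lambda p}<\infty$; this is the crux of the argument, since at the critical exponent $3/4$ (that is, $\lambda=0$) the bound would only read $\E A_j\le C$ and the series would diverge, so the logarithmic weight in $\omega_\lambda$ is indispensable. Granting summability, Tonelli's theorem gives $\E\sum_{j\ge1}A_j=\sum_{j\ge1}\E A_j<\infty$, so $\sum_{j\ge1}A_j<\infty$ and in particular $\sup_{j\ge1}A_j<\infty$ almost surely. By the norm equivalence of Theorem \ref{Besov} this yields $\|L(\xi,\cdot)\|_{\omega_\lambda,p}<\infty$ a.s., i.e. $L(\xi,\cdot)\in\mathcal{B}^{\omega_\lambda}_p$ with probability one. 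The only genuinely delicate point is the bookkeeping of exponents confirming that the $3/4$-regularity of the increments matches the $t^{3/4}$ part of the modulus, so that the logarithmic factor alone governs convergence of the dyadic series; everything else is routine.
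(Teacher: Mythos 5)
Your proposal is correct and follows essentially the same route as the paper: the paper obtains Theorem \ref{LocalBesov} in one line by combining the moment bound of Proposition \ref{prop 4} with the Kolmogorov-type criterion in modular Besov spaces of \cite[Lemma 2.1.]{BLD}, which is exactly the black box you acknowledge. Your dyadic argument via Theorem \ref{Besov} is in effect that lemma's proof written out in this special case, with the same exponent bookkeeping showing that $p\lambda>1$ is what makes the series over $j$ converge.
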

\begin{rem}
\begin{enumerate}
\item Taking $ \lambda $ small enough, Theorem \ref{LocalBesov} ensures a more accurate regularity result. Particularly, we deduce by injection (\ref{Rinject}) that $L(\xi,.)$ satisfies a.s. a $\beta-$H\"{o}lder condition for any $\beta<{\frac{3}{4}}$.
\item The process $ u(. , x) $ satisfies (\ref{(p+1)/2}) of Theorem \ref{thmberman} with $0\leq p<3$. Then there is a version of the local time $L(\xi, t)$, which is differentiable with respect to the space variable, and a.s. $L^{(1)}(\xi, t) = \partial_{\xi} L(\xi, t )\in L^{2}(\R , d\xi) $.\\
It is easy to verify that $L^{(1)}$ satisfies (\ref{E J()}) with $T(n,\zeta+1)$ instead of $T(n, \zeta)$. Following
the same arguments as in Proposition \ref{prop 4}, the finiteness of the integral in (\ref{E C(n,xi)}) (with
$\zeta+1$ in place of $\zeta$) requires that $\zeta < 1/2$. Furthermore, we obtain that for all $x,y\in \R$, $t,t+h \in(0,T]$  and for any positive integer $n$, there exists $C_n>0$ such that
$$\E[L^{(1)}(y,t+h)-L^{(1)}(x,t+h)-L^{(1)}(y,t)+L^{(1)}(x,t)]^n\leq C_n|x-y|^{\zeta n}|h|^{n(1/2-\zeta/4)},$$
where $0<\zeta<1/2.$
\end{enumerate}
\end{rem}
Consequently, we have the following regularity result
\begin{thm}
  There is a jointly continuous version of $(L^{(1)}(\xi,t), t \in [0,T], \xi\in\R)$ satisfying: For all
compact $U \subset\R$ and for any $\alpha < 1/2$
$$\sup_{x,y\in U,x\neq y}\frac{|L^{(1)}(x,t)-L^{(1)}(y,t)|}{|x-y|^{\alpha}}<\infty,\text{ a.s.}$$
\end{thm}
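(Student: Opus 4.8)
The plan is to reduce the statement to a one–parameter Kolmogorov continuity argument in the space variable at a fixed time $t$, mirroring the scheme of Proposition \ref{prop 4} and Theorem \ref{joint continuity} but carrying the extra differentiation factor announced in the second item of the preceding remark. Writing $X_s=u(s,x)$ and differentiating the inversion formula \eqref{local time representation as inversion formula} in $\xi$ gives, for fixed $t$,
\begin{equation*}
L^{(1)}(\xi,t)=\frac{1}{2\pi}\int_{\R}\int_{0}^{t}(-iu)\,e^{iu(X_s-\xi)}\,ds\,du.
\end{equation*}
Taking the $n$-th moment of the space increment $L^{(1)}(y,t)-L^{(1)}(x,t)$ for an even integer $n$, and using $|e^{-iyu_j}-e^{-ixu_j}|\leq 2^{1-\zeta}|u_j|^{\zeta}|y-x|^{\zeta}$ together with $|-iu_j|=|u_j|$, I would bound this moment by $C\,|y-x|^{n\zeta}\,T(n,\zeta+1)$, where $T$ is the quantity introduced in \eqref{J() m!}; this is exactly the passage from $T(n,\zeta)$ to $T(n,\zeta+1)$ recalled in the remark.

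Next I would run the same reductions as in Proposition \ref{prop 4}: restrict to the ordered simplex $t_0=0<t_1<\dots<t_n<t$, perform the triangular change of variables $u_n=v_n$, $u_j=v_j-v_{j+1}$, and use the LND property (second point of Theorem \ref{thm 1}) to dominate the characteristic function by $\exp\bigl(-\tfrac{C_n}{2}\sum_{j}v_j^2\sigma^2(j)\bigr)$, with $\sigma^2(j)=\E(X_{t_j}-X_{t_{j-1}})^2$. Since the derivative now produces the exponent $\zeta+1>1$, subadditivity must be replaced by $|a-b|^{\zeta+1}\leq 2^{\zeta}(|a|^{\zeta+1}+|b|^{\zeta+1})$, after which $\prod_{j=1}^{n-1}|v_j-v_{j+1}|^{\zeta+1}|v_n|^{\zeta+1}$ expands into a finite sum of monomials $\prod_{j=1}^{n}|v_j|^{\epsilon_j(\zeta+1)}$ with $\epsilon_j\in\{0,1,2\}$ and $\sum_j\epsilon_j=n$. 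The Gaussian integral in the $v$'s is then finite, and the lower bound $\sigma^2(j)\geq C|t_j-t_{j-1}|^{1/2}$ from Lemma \ref{lemma 1} reduces the estimate to
\begin{equation*}
\int_{0<t_1<\dots<t_n<t}\prod_{j=1}^{n}|t_j-t_{j-1}|^{-b_j}\,d\overline{t},\qquad b_j=\frac{1+(\zeta+1)\epsilon_j}{4}.
\end{equation*}
Because $\epsilon_j\leq 2$, the condition $b_j<1$ needed for convergence is precisely $\zeta<1/2$; as the terminal time $t$ is fixed, this simplex integral equals a finite constant by the Gamma formula used in \eqref{fin}. I therefore expect, for every even $n$ and every $0<\zeta<1/2$,
\begin{equation*}
\E\bigl[\,|L^{(1)}(y,t)-L^{(1)}(x,t)|^{\,n}\,\bigr]\leq C_n\,|y-x|^{n\zeta}.
\end{equation*}

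With this single-increment bound, the classical Kolmogorov continuity theorem applied to $x\mapsto L^{(1)}(x,t)$ yields a version that is locally $\alpha$-Hölder for every $\alpha<\zeta-\tfrac{1}{n}$; letting $n\to\infty$ and $\zeta\uparrow\tfrac{1}{2}$ gives the asserted Hölder modulus for all $\alpha<1/2$, uniformly on each compact $U$. Joint continuity of $(L^{(1)}(\xi,t))$ then follows from the rectangular increment bound recalled in the remark (the analogue of the first inequality of Proposition \ref{prop 4}) combined with the multiparameter continuity theorem \cite[Theorem 26.1]{GemanHoroviz}, exactly as in Theorem \ref{joint continuity}, and the space-Hölder property transfers to this jointly continuous version. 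The main obstacle is the sharp bookkeeping in the second paragraph: the extra factor $|u|$ coming from $\partial_\xi$ raises each exponent from $\zeta$ to $\zeta+1$, and it is precisely this that forces the integrability threshold down from $\zeta<1$ (as in Proposition \ref{prop 4}) to $\zeta<1/2$, which is what makes $1/2$ the correct Hölder exponent here.
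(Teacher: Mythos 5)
Your proposal is correct and follows essentially the same route as the paper: the paper's preceding remark establishes exactly the moment bound you derive (replacing $T(n,\zeta)$ by $T(n,\zeta+1)$, with the constraint $\zeta<1/2$ forced by the convergence condition $b_j=\tfrac{1+(\zeta+1)\epsilon_j}{4}<1$ for $\epsilon_j=2$), and the theorem is then deduced by the same classical Kolmogorov/Geman--Horowitz continuity arguments you invoke. Your write-up merely makes explicit the bookkeeping (the convexity inequality for the exponent $\zeta+1>1$, the simplex integral, the limit $n\to\infty$, $\zeta\uparrow 1/2$) that the paper leaves to the reader.
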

\subsubsection{Hausdorff dimension of level sets}
Let $x\in \R$ be fixed.  We define, for any $\xi\in\R$,
the $\xi$-level set of $(u(t,x)\,,\,\,t\in [0,T])$ by
$$M^x(\xi)=\{t\in [0,T]\;:\;u(t,x)=\xi\}.$$
Our goal is to determine the Hausdorff dimension $\dim_H(M^x(u(t_0,x)))$ of $M^x(u(t_0, x)) $. We can refer to \cite[p. 27]{Falconer} for an introduction to Hausdorff measure and dimension.
One of the crucial applications of the joint continuity of the local time is to extend $L(\xi,.)$ as a finite measure supported on the level set $M^x(\xi)$ see \cite[Theorem 8.6.1]{adler}. To find a lower bound of the Hausdorff dimension of the level sets we need first the following Frostman's Lemma cf. \cite[Lemma 6.10.]{DalangKHOSHNEVISAN}
\begin{lem}\label{Frostman}
  Let $E$ be a Borel set of $\R$. $\mathcal{H}^s(E)>0$ if and only if there exists a finite Borel measure $\mu$ supported on $E$ such that $\mu(E)>0$ and a positive constant $c$ such that
  $$\mu((y-r,y+r))\leq c r^s,$$
  for all $y\in \R$ and $r>0$.
\end{lem}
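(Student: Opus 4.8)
The plan is to treat the two implications separately, since one direction is an elementary covering estimate (the mass distribution principle) while the converse requires an explicit construction of the measure.

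For the implication ``existence of $\mu$ $\Rightarrow$ $\mathcal{H}^s(E)>0$'', assume a finite Borel measure $\mu$ with $\mu(E)>0$ and $\mu((y-r,y+r))\le c\,r^s$ for all $y,r$. Let $\{U_i\}_i$ be any countable cover of $E$ with $\diam(U_i)\le\delta$. Each $U_i$ is contained in an interval of radius $\diam(U_i)$ centred at one of its points, so $\mu(U_i)\le c\,\diam(U_i)^s$, whence
$$0<\mu(E)\le\sum_i\mu(U_i)\le c\sum_i\diam(U_i)^s.$$
Taking the infimum over admissible covers gives $\mathcal{H}^s_\delta(E)\ge\mu(E)/c$, and letting $\delta\downarrow0$ yields $\mathcal{H}^s(E)\ge\mu(E)/c>0$. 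This is immediate and is where I would begin.

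For the converse, assume $\mathcal{H}^s(E)>0$ and build $\mu$. First I would reduce to a compact set: by the inner regularity of Hausdorff measure on Borel (indeed analytic) sets, there is a compact $K\subseteq E$ with $\mathcal{H}^s(K)>0$, hence with positive Hausdorff content $\mathcal{H}^s_\infty(K)>0$; it suffices to produce $\mu$ supported on $K$. On $K$ I would run the classical dyadic redistribution scheme. Fix a large level $m$, assign to every dyadic interval $Q$ of length $2^{-m}$ meeting $K$ the uniform mass $\ell(Q)^s=2^{-ms}$, and then sweep upward through the scales $k=m-1,\dots,0$, at each step multiplying the current mass $\nu$ on a dyadic interval $Q$ of length $2^{-k}$ by the factor $\min\{1,\ell(Q)^s/\nu(Q)\}$, so that the running measure never exceeds $\ell(Q)^s$ on any dyadic interval. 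The resulting $\mu^{(m)}$ then satisfies $\mu^{(m)}(Q)\le\ell(Q)^s$ for every dyadic $Q$, while the maximal \emph{saturated} intervals (those where the rescaling factor fell strictly below $1$), together with the residual top-level intervals, form a cover of $K$ on which $\mu^{(m)}$ carries mass exactly $\ell(Q)^s$; comparing with the definition of content gives the uniform lower bound $\mu^{(m)}(\R)\ge c_0\,\mathcal{H}^s_\infty(K)>0$, independent of $m$. Since the total masses are bounded above and the supports lie in a fixed bounded set, weak-$\ast$ compactness lets me extract $\mu^{(m)}\rightharpoonup\mu$; the limit is supported on $K$, has mass $\ge c_0\mathcal{H}^s_\infty(K)>0$, and inherits $\mu(Q)\le\ell(Q)^s$. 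Finally, any interval $(y-r,y+r)$ meets only a bounded number of dyadic intervals of length comparable to $r$, which promotes the dyadic bound to $\mu((y-r,y+r))\le C\,r^s$, as required.

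The main obstacle is entirely in this converse direction, and within it in two places: the uniform lower bound $\mu^{(m)}(\R)\ge c_0\mathcal{H}^s_\infty(K)$, which is the geometric heart of the argument and is exactly what forces the redistribution to proceed upward through the dyadic tree rather than naively; and the reduction to a compact subset of positive measure, which rests on the nontrivial inner regularity of Hausdorff measure. Both facts are classical, so in practice I would simply invoke the statement as quoted from \cite{DalangKHOSHNEVISAN}; the construction above is the self-contained route I would follow if a full proof were demanded.
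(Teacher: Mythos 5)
The paper contains no proof of this lemma at all: it is stated as a quoted classical fact and simply attributed to \cite[Lemma 6.10]{DalangKHOSHNEVISAN}, which is precisely what your closing sentence proposes to do. The proof you sketch is nevertheless correct in both directions and is the standard one, so it is worth comparing. The forward implication is the mass distribution principle, and your covering argument (enclosing each $U_i$ in an interval centred at one of its points) is complete up to the trivial remark that one may enlarge the radius by $\varepsilon$ and let $\varepsilon\downarrow 0$. The converse is Frostman's dyadic construction, and you correctly isolate its two genuinely nontrivial ingredients: the Besicovitch--Davies inner regularity theorem, which supplies a compact $K\subseteq E$ with $\mathcal{H}^s(K)>0$ and is exactly where the hypothesis that $E$ is Borel (or analytic) enters; and the uniform lower bound on the total mass, which holds because every point of $K$ lies in some saturated dyadic interval (consider the coarsest scale at which a strict rescaling occurred along its chain of ancestors), and the maximal saturated intervals are pairwise disjoint, cover $K$, and carry mass exactly $\ell(Q)^s$, so their total mass dominates the dyadic Hausdorff content of $K$. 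One small reordering would make your last step airtight: rather than letting the weak-$\ast$ limit ``inherit'' the bound on closed dyadic intervals (where mass accumulating on boundaries is a nuisance), first prove the uniform estimate $\mu^{(m)}((y-r,y+r))\leq Cr^{s}$ for all $m$ and all $r>0$ --- for $r\geq 2^{-m}$ by the bounded-overlap argument you give, and for $r<2^{-m}$ because $\mu^{(m)}$ is Lebesgue-uniform at the bottom scale --- and then apply the portmanteau inequality $\mu(U)\leq\liminf_{m}\mu^{(m)}(U)$ to the open interval $U=(y-r,y+r)$, which is the form the lemma requires anyway. In short: the paper's citation buys brevity, which is reasonable since the lemma is classical; your construction buys a self-contained argument with an explicit constant, at the price of invoking Davies' theorem for the compact reduction.
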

\begin{lem}\label{lem level set lower}
  For all $x\in \R$, we have almost surely and for almost every $t_0\in[0,T]$
  $$\dim_H(M^x(u(t_0,x)))\geq \frac{3}{4}.$$
\end{lem}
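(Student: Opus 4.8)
The plan is to realise the temporal local time $L(\xi,\cdot)$ as a mass distribution carried by the level set, and then to extract the Hausdorff dimension from its Hölder regularity in $t$ via Frostman's Lemma \ref{Frostman}. Write $X_t=u(t,x)$ and work on a full-probability event on which the conclusions of Theorem \ref{joint continuity} hold: $(\xi,t)\mapsto L(\xi,t)$ is jointly continuous and, for each $\alpha<3/4$, there are a.s. finite positive random variables $\eta,\eta'$ with $\sup_{\xi}|L(\xi,t+h)-L(\xi,t)|\le \eta'|h|^{\alpha}$ whenever $|h|<\eta$.

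For a fixed level $\xi$, joint continuity lets us regard the nondecreasing map $t\mapsto L(\xi,t)$ as the distribution function of a finite continuous Borel measure $\mu_{\xi}$ on $[0,T]$, determined by $\mu_{\xi}((a,b])=L(\xi,b)-L(\xi,a)$. By the occupation density formula, $\int_{[0,T]}f(X_t)\,dt=\int_{\R}f(a)L(a,T)\,da$, this measure is carried by the (closed) level set $M^x(\xi)=\{t:X_t=\xi\}$, so that $\mu_{\xi}$ may be viewed as a finite measure on $M^x(\xi)$ (cf. \cite[Theorem 8.6.1]{adler}). The first step I would carry out is to check that its total mass is positive for almost every starting value: applying the occupation density formula to the indicator of the null set $Z=\{a:L(a,T)=0\}$ gives $\lambda\{t\in[0,T]:X_t\in Z\}=\int_Z L(a,T)\,da=0$, hence for Lebesgue-almost every $t_0$ the level $\xi_0=X_{t_0}$ satisfies $L(\xi_0,T)>0$, and so $\mu_{\xi_0}(M^x(\xi_0))=L(\xi_0,T)>0$.

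It then remains to control the measure of small intervals. For $\xi_0=X_{t_0}$ as above and any $\alpha<3/4$, the temporal Hölder estimate of Theorem \ref{joint continuity} yields, for $r<\eta/2$,
$$\mu_{\xi_0}\big((t-r,t+r)\big)=L(\xi_0,t+r)-L(\xi_0,t-r)\le \eta'(2r)^{\alpha}\le c\,r^{\alpha},$$
uniformly in $t$. Thus $\mu_{\xi_0}$ is a finite measure supported on $M^x(\xi_0)$ with $\mu_{\xi_0}(M^x(\xi_0))>0$ and interval mass of order $r^{\alpha}$, so Frostman's Lemma \ref{Frostman} gives $\mathcal{H}^{\alpha}(M^x(\xi_0))>0$ and therefore $\dim_H M^x(\xi_0)\ge\alpha$. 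Letting $\alpha$ run through a countable sequence increasing to $3/4$ (so that the attendant null sets of $t_0$ remain negligible after a countable union) yields $\dim_H(M^x(u(t_0,x)))\ge 3/4$ for almost every $t_0$, almost surely.

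The main obstacle I anticipate is measure-theoretic bookkeeping rather than hard analysis: one must ensure that the support property of $\mu_{\xi}$, the positivity coming from the set $Z$, and the Frostman estimate governed by the random constants $\eta,\eta'$ all hold on a single good event and for the same almost-every $t_0$, while the exponent $\alpha$ ranges over a countable set below $3/4$. Pinning down that the occupation measure is genuinely concentrated on $M^x(\xi)$ (and not merely on its closure together with a negligible remainder) is the delicate point, and is exactly where the joint continuity established in Theorem \ref{joint continuity} is indispensable.
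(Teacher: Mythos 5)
Your proof is correct and follows essentially the same route as the paper's: view $L(u(t_0,x),\cdot)$ as a finite measure supported on the level set, invoke the H\"{o}lder estimate \eqref{regulalirity L time} from Theorem \ref{joint continuity}, and conclude via Frostman's Lemma \ref{Frostman}. The only cosmetic difference is that you derive the positivity $L(u(t_0,x),T)>0$ for almost every $t_0$ directly from the occupation density formula, whereas the paper cites this as \cite[Lemma 1.1.]{Berman1}; your argument is precisely the standard proof of that lemma.
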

\begin{proof}
 Let $x\in \R$ be fixed, we have by \cite[ Lemma 1.1.]{Berman1} that for almost every $t_0$
  $$L(u(t_0,x),T)>0.$$
 We know that $L(u(t_0,x),.)$ is a measure supported on $M^x(u(t_0,x))$, and  \eqref{regulalirity L time} entails that $L(u(t_0,x),.)$ satisfies a.s. a H\"{o}lder condition of any order smaller than $\tfrac{3}{4}$. So by Lemma \ref{Frostman} we have almost surely and for almost every $t_0$
 $$\dim_H(M^x(u(t_0,x)))\geq \frac{3}{4}.$$
\end{proof}
\begin{lem}\label{lem level set upper}
  For all $x\in \R$,  we have almost surely and for all $t_0\in [0,T]$
  $$\dim_H(M^x(u(t_0,x)))\leq \frac{3}{4}.$$
\end{lem}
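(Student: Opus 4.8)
The plan is to establish the reverse inequality by a covering argument that combines the near-optimal Hölder regularity of the path $t\mapsto u(t,x)$ with the boundedness of its local time. Fix $x\in\R$ and write $X_t=u(t,x)$; denote $\mathrm{osc}_I X=\sup_{s,t\in I}|X_s-X_t|$. First I would record two facts that hold simultaneously on a single event of probability one. Combining Theorem \ref{pricipal t} with the injection $\mathcal{B}^{1/4}_{p}\hookrightarrow\mathcal{H}^{1/4-1/p}$, the path $X$ is $\gamma$-Hölder on $[0,T]$ for every $\gamma<1/4$; thus for each such $\gamma$ there is an a.s.\ finite constant $C=C(\omega)$ with $\mathrm{osc}_I X\le C|I|^{\gamma}$ for every interval $I\subseteq[0,T]$. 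Moreover, by Theorem \ref{joint continuity} the map $\xi\mapsto L(\xi,T)$ is continuous, and since it vanishes off the compact range of $X$ it is bounded: $L^{*}:=\sup_{\xi}L(\xi,T)<\infty$ a.s.

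The core step is a bound on the number of dyadic intervals whose range straddles a given level, \emph{uniform} in that level. For $n\geq1$ let $\mathcal{D}_n$ be the partition of $[0,T]$ into $2^n$ intervals of length $T2^{-n}$, and put $N_n(\xi)=\#\{I\in\mathcal{D}_n:\xi\in[\min_I X,\max_I X]\}$. If $\xi\in[\min_I X,\max_I X]$ then $|X_t-\xi|\le\mathrm{osc}_I X\le C(T2^{-n})^{\gamma}$ for every $t\in I$, so the union of these intervals is contained in $\{t\in[0,T]:|X_t-\xi|\le C(T2^{-n})^{\gamma}\}$. Applying the occupation density formula from the Preliminaries with $f=\1_{[\xi-\delta,\xi+\delta]}$, $\delta=C(T2^{-n})^{\gamma}$, the Lebesgue measure of this set equals $\int_{\xi-\delta}^{\xi+\delta}L(\eta,T)\,d\eta\le 2\delta L^{*}$. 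Since the intervals of $\mathcal{D}_n$ are disjoint and of length $T2^{-n}$, this gives
$$N_n(\xi)\,T2^{-n}\le 2C(T2^{-n})^{\gamma}L^{*},\qquad\text{hence}\qquad N_n(\xi)\le C'\,2^{n(1-\gamma)}$$
for a constant $C'=C'(\omega)$, valid for all $\xi\in\R$ at once.

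Finally I would convert this into the dimension bound. For any $\xi$, the intervals $I\in\mathcal{D}_n$ with $\xi\in[\min_I X,\max_I X]$ cover $M^x(\xi)$, so for $s>1-\gamma$ the $s$-dimensional Hausdorff content satisfies
$$\mathcal{H}^{s}_{\infty}(M^x(\xi))\le N_n(\xi)\,(T2^{-n})^{s}\le C''\,2^{n(1-\gamma-s)}\xrightarrow[n\to\infty]{}0.$$
Thus $\mathcal{H}^{s}(M^x(\xi))=0$ and $\dim_H M^x(\xi)\le s$ for every $\xi$ simultaneously; letting $s\downarrow 1-\gamma$ and then $\gamma\uparrow 1/4$ (along countably many values, preserving the a.s.\ event) yields $\dim_H M^x(\xi)\le 3/4$ for all $\xi\in\R$. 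Specializing to $\xi=u(t_0,x)$ proves the claim for all $t_0\in[0,T]$.

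The delicate point is exactly the uniformity in $\xi$. A naive first-moment covering—bounding $\int N_n(\xi)\,d\xi\le\sum_{I\in\mathcal{D}_n}|X(I)|$—only controls Lebesgue-almost every level and hence would not reach \emph{every} $t_0$. It is the boundedness of the local time, which turns the Hölder modulus of $X$ into a uniform occupation estimate through the occupation density formula, that upgrades the bound from almost every level to every level, and therefore to every $t_0$.
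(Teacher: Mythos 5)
Your proof is correct and follows essentially the same route as the paper: the paper's own proof just records the two ingredients you use---H\"{o}lder continuity of $t\mapsto u(t,x)$ of every order $\gamma<1/4$ and the joint continuity of the local time from Theorem \ref{joint continuity}---and then invokes \cite[Theorem 8.7.3.]{adler} as a black box. Your dyadic covering argument, with the uniform-in-$\xi$ interval count coming from the occupation density formula and the a.s.\ boundedness of $\xi\mapsto L(\xi,T)$, is precisely the standard proof of that cited theorem, so you have in effect reproved it rather than taken a different path.
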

\begin{proof}
  We know that $u(.,x)$ satisfies a.s. a H\"{o}lder condition of any order smaller than $\tfrac{1}{4}$. By Theorem \ref{joint continuity} its local time is jointly continuous. The result then follows by  \cite[Theorem 8.7.3.]{adler}.
\end{proof}
Combining Lemma \ref{lem level set lower} and Lemma \ref{lem level set upper} we obtain
\begin{cor}
  For all $x\in \R$, we have almost surely and for almost every $t_0$
  $$\dim_H(M^x(u(t_0,x)))= \frac{3}{4}.$$
\end{cor}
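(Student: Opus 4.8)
The plan is to obtain the desired equality by simply combining the two one-sided bounds established in Lemma \ref{lem level set lower} and Lemma \ref{lem level set upper}, while keeping careful track of the probabilistic and Lebesgue quantifiers attached to each. Since both lemmas are already proved, I may invoke them directly; the work consists entirely in reconciling their quantifier structures on a single event of full probability.

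First I would record the lower bound from Lemma \ref{lem level set lower}: there is an event $\Omega_1$ with $\mathbb{P}(\Omega_1)=1$ such that, for every $\omega\in\Omega_1$, the inequality $\dim_H(M^x(u(t_0,x)))\geq \tfrac34$ holds for Lebesgue-almost every $t_0\in[0,T]$. Next I would record the upper bound from Lemma \ref{lem level set upper}: there is an event $\Omega_2$ with $\mathbb{P}(\Omega_2)=1$ such that, for every $\omega\in\Omega_2$, the inequality $\dim_H(M^x(u(t_0,x)))\leq \tfrac34$ holds for \emph{all} $t_0\in[0,T]$ simultaneously. I would then set $\Omega_0=\Omega_1\cap\Omega_2$, so that $\mathbb{P}(\Omega_0)=1$. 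For each $\omega\in\Omega_0$ the upper bound holds for every $t_0$ while the lower bound holds for almost every $t_0$; hence the two bounds coincide for almost every $t_0$, yielding $\dim_H(M^x(u(t_0,x)))=\tfrac34$ for almost every $t_0$. This is precisely the assertion of the corollary.

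There is no genuine analytic obstacle in this final step; the only point requiring a little care is the bookkeeping of quantifiers. The lower bound is an ``almost every $t_0$'' statement whereas the upper bound is an ``every $t_0$'' statement, and one must check that their conjunction survives on a single probability-one event and still produces an ``almost every $t_0$'' conclusion. This dissymmetry is exactly why the corollary is stated with ``almost every $t_0$'' rather than ``every $t_0$'': the lower bound rests on the positivity $L(u(t_0,x),T)>0$, which holds only for a.e. $t_0$ (by \cite[Lemma 1.1.]{Berman1}) and cannot in general be upgraded to all $t_0$, so one cannot hope to strengthen the conclusion to hold for every $t_0$.
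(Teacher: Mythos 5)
Your proposal is correct and matches the paper's own proof, which simply combines Lemma \ref{lem level set lower} and Lemma \ref{lem level set upper}; your explicit bookkeeping of the two full-probability events and of the ``almost every $t_0$'' versus ``all $t_0$'' quantifiers is exactly the (implicit) content of that combination. Your closing observation about why the conclusion cannot be upgraded to every $t_0$ is a sound reading of the role of $L(u(t_0,x),T)>0$ in the lower-bound lemma.
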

\subsection{Local time of the process $x\to u(t,x)$}
\subsubsection{Existence of the local time}
Let $[a,b]\subset\mathbb{R}$, we will prove the existence of the local time of the process $(u(t,x)\,,\,\,\,x\in [a,b])$ where $t>0$ is fixed.  First we need the following result
\begin{lem}\label{lemma1}
    For fixed $t>0$, and for any $x,y\in [a,b]$, there exists a constant $c_t>0$ such that
    $$c_t|x-y|\leq\E(u(t,x)-u(t,y))^2\leq\frac{|x-y|}{2\pi}.$$
\end{lem}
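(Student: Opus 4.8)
The plan is to reduce the whole statement to an analytic estimate on the variance function $F$ from \eqref{F}. By the Wiener isometry and Lemma \ref{estimation GG x}, the process $x\mapsto u(t,x)$ is centred Gaussian with covariance $\E[u(t,x)u(t,y)]=F(x-y)$; since $\E[u(t,x)^2]=\E[u(t,y)^2]=F(0)$, this gives
\begin{equation*}
  \E(u(t,x)-u(t,y))^2=2\bigl(F(0)-F(x-y)\bigr)=\int_0^t\frac{1}{\sqrt{\pi r}}\Bigl(1-e^{-\frac{(x-y)^2}{4r}}\Bigr)\,dr .
\end{equation*}
Thus the two-sided bound is equivalent to showing that $2(F(0)-F(x-y))$ is comparable to $|x-y|$.

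The next step is to exploit the scaling of the heat kernel. Writing $u=x-y\neq 0$ and substituting $r=u^2 s$ factors out $|x-y|$ exactly:
\begin{equation*}
  \E(u(t,x)-u(t,y))^2=\frac{|x-y|}{\sqrt{\pi}}\int_0^{t/(x-y)^2}\frac{1-e^{-1/(4s)}}{\sqrt{s}}\,ds ,
\end{equation*}
where the remaining integrand is nonnegative, behaves like $s^{-1/2}$ as $s\to 0^+$ and like $\tfrac14 s^{-3/2}$ as $s\to\infty$, hence is integrable on $(0,\infty)$. Discarding the upper truncation gives the upper estimate, and the substitution $w=1/(4s)$ followed by an integration by parts identifies $\int_0^\infty \frac{1-e^{-1/(4s)}}{\sqrt{s}}\,ds$ with $\tfrac12\int_0^\infty w^{-3/2}(1-e^{-w})\,dw=\Gamma(1/2)=\sqrt{\pi}$; this yields an upper bound of the form $\E(u(t,x)-u(t,y))^2\le C\,|x-y|$ with an explicit constant.

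For the lower bound I would use that $x,y\in[a,b]$ forces $|x-y|\le b-a$, so the upper limit of integration satisfies $t/(x-y)^2\ge t/(b-a)^2>0$. Since the integrand is strictly positive and $T\mapsto\int_0^T \frac{1-e^{-1/(4s)}}{\sqrt{s}}\,ds$ is increasing, the truncated integral is bounded below by the strictly positive constant $\int_0^{t/(b-a)^2}\frac{1-e^{-1/(4s)}}{\sqrt{s}}\,ds$, depending only on $t$ and $b-a$. Setting $c_t=\tfrac1{\sqrt{\pi}}\int_0^{t/(b-a)^2}\frac{1-e^{-1/(4s)}}{\sqrt{s}}\,ds>0$ then delivers $\E(u(t,x)-u(t,y))^2\ge c_t\,|x-y|$.

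The main obstacle is the treatment of the integrand near $s=0$ (equivalently $r\to 0^+$): there $1-e^{-(x-y)^2/(4r)}\to 1$ while $r^{-1/2}$ blows up, so the naive bound $1-e^{-z}\le z$ diverges and is useless. One must instead either evaluate the dimensionless integral exactly as above, or split the $r$-integral at $r=(x-y)^2$, using $1-e^{-(x-y)^2/(4r)}\le 1$ on the small-$r$ part and $1-e^{-(x-y)^2/(4r)}\le (x-y)^2/(4r)$ on the large-$r$ part. The only remaining care is to keep the constants uniform over $x,y\in[a,b]$ and to dispose separately of the degenerate case $(x-y)^2>t$, where one simply bounds the integrand by $r^{-1/2}/\sqrt{\pi}$ and integrates.
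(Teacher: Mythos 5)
Your proof is correct, and it takes a genuinely different route from the paper. The paper works in Fourier space: after Parseval's identity and the scaling $v=u(x-y)$, $\tau=r/(x-y)^2$, it writes $\E(u(t,x)-u(t,y))^2$ as $\frac{x-y}{2\pi}$ times a constant minus the ``defect'' term $\int_{\R}e^{-\lambda v^2}\frac{|e^{iv}-1|^2}{v^2}dv$, $\lambda=t/(x-y)^2$, and then devotes most of its effort to the lower bound, representing that defect as $\int_{\R}|f*g|^2$ (with $f$ a centred Gaussian density and $g=\chi_{[0,1]}$) and evaluating it probabilistically as $2\E\left[(1-\sqrt{2\lambda}N)\chi_{[0,1]}(\sqrt{2\lambda}N)\right]\leq 2\mathbb{P}\left[0\leq N\leq \tfrac{b-a}{\sqrt{2t}}\right]<1$. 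You instead stay in real space: you use the closed-form covariance $F$ of Lemma \ref{estimation GG x}, factor out $|x-y|$ exactly by the parabolic scaling $r=(x-y)^2s$, get the upper bound from a Gamma-integral evaluation and the lower bound from monotonicity of the truncated integral in its upper limit, using only $|x-y|\leq b-a$. Your argument is shorter and more elementary, and it isolates exactly where the interval $[a,b]$ enters.

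One discrepancy must be flagged, and it is not a defect of your proof but an error in the paper. Your computation gives the sharp upper constant $1$, i.e. $\E(u(t,x)-u(t,y))^2\leq|x-y|$, whereas the lemma claims $\tfrac{1}{2\pi}$. The paper's constant comes from the assertion $\int_{\R}\frac{|e^{iv}-1|^2}{v^2}dv=1$ ``by Parseval''; with the Fourier convention used there, Plancherel carries a factor $2\pi$, so in fact $\int_{\R}\frac{|e^{iv}-1|^2}{v^2}dv=\int_{\R}\frac{4\sin^2(v/2)}{v^2}dv=2\pi$ (the same factor is dropped again in the identity $\int_{\R}|\widehat{f*g}|^2=\int_{\R}|f*g|^2$). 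Correcting this, the paper's own computation also yields the upper bound $|x-y|$, and the bound $\tfrac{|x-y|}{2\pi}$ is genuinely false: your scaled formula shows $\E(u(t,x)-u(t,y))^2\to|x-y|$ as $t/(x-y)^2\to\infty$, which exceeds $\tfrac{|x-y|}{2\pi}$ already for $t\geq(x-y)^2$. The paper's lower-bound conclusion survives the missing $2\pi$ (since $2\mathbb{P}[0\leq N\leq\tfrac{b-a}{\sqrt{2t}}]<1$), and nothing downstream uses the precise upper constant — only the two-sided comparability with $|x-y|$, which both proofs establish — so your proof is sound; the statement's upper bound should simply read $|x-y|$.
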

\begin{proof}
Let $x,y\in [a,b]$ such that $x>y$, the change of variable $r=t-s$ together with Parseval's identity give
\begin{align*}
    \E(u(t,x)-u(t,y))^2 &= \int_0^t\int_{\R}(G(r,x-z)-G(r, y-z))^2 dz dr\\
    &=\frac{1}{2\pi}\int_0^t\int_{\R}\left|e^{ixu}\exp(-\frac{ru^2}{2})-e^{iyu}\exp(-\frac{ru^2}{2})\right|^2du dr\\
                        &=\frac{1}{2\pi}\int_0^t\int_{\R}\exp(-ru^2)\left|e^{i(x-y)u}-1\right|^2 du dr.
\end{align*}
Again by the transformations  $v=u(x-y)$ and $\tau=\frac{r}{(x-y)^2}$, we get
$$\E(u(t,x)-u(t,y))^2=\frac{x-y}{2\pi}\int_0^{\tfrac{t}{(x-y)^2}}\int_{\R}\exp(-\tau v^2)\left|e^{iv}-1\right|^2 dv d\tau.$$
Using Fubini, we obtain
\begin{align*}
\E(u(t,x)-u(t,y))^2 &= \frac{x-y}{2\pi}\int_{\R}\int_0^{\tfrac{t}{(x-y)^2}}\exp(-\tau v^2)\left|e^{iv}-1\right|^2 d\tau dv\\
                    &= \frac{x-y}{2\pi}\int_{\R}(1-\exp(-\frac{t}{(x-y)^2}v^2))\frac{|e^{iv}-1|^2}{v^2}dv\\
                    &= \frac{x-y}{2\pi}\left\{\int_{\R}\frac{|e^{iv}-1|^2}{v^2}dv-\int_{\R}\exp(-\frac{t}{(x-y)^2}v^2)\frac{|e^{iv}-1|^2}{v^2}dv\right\}\\
                    &=\frac{x-y}{2\pi}\left\{1-\int_{\R}\exp(-\frac{t}{(x-y)^2}v^2)\frac{|e^{iv}-1|^2}{v^2}dv\right\}\,,
\end{align*}
where in the last line, Parseval's identity gives
$\int_{\R}\frac{|e^{iv}-1|^2}{v^2}dv=\int_{\R}\chi_{[0,1]}(v)dv=1.$ So, on one hand, it is clear that
$$\E(u(t,x)-u(t,y))^2\leq\frac{|x-y|}{2\pi}.$$
On the other hand,
to find a lower bound for $\E(u(t,x)-u(t,y))^2$, we need to get a constant $0\leq C_t<1$ such that
$$ C_t \geq\int_{\R}\exp(-\lambda v^2)\frac{|e^{iv}-1|^2}{v^2}dv:=A\,,$$
where we have used the notation
$\lambda=\frac{t}{(x-y)^2} .$\\
Now, denote by
$f(v)=\frac{1}{\sqrt{2\pi \lambda}}e^{-v^2/2\lambda} $ and $ g(v)=\chi _{[0,1]}(v).$
It follows by Parseval's identity,
\begin{align*}
    A = \int_{\R}\left|\exp(-\frac{\lambda v^2}{2})\frac{e^{iv}-1}{iv}\right|^2dv
      &= \int_{\R}|\widehat{f*g}(v)|^2dv\\
      &=\int_{\R}|f*g(v)|^2dv .
      \end{align*}
Then
$$A=\int_{\R}\left\{\int_{[0,1]^2}\frac{1}{2\pi \lambda}e^{-(v-z_1)^2/2\lambda}e^{-(v-z_2)^2/2\lambda}dz_1dz_2\right\}dv.$$
By Fubini we have
\begin{align*}
    A &= \int_{[0,1]^2}\frac{1}{\sqrt{2\pi \lambda}}\left\{\int_{\R}\frac{1}{\sqrt{2\pi \lambda}}e^{-(v-z_1)^2/2\lambda}e^{-(v-z_2)^2/2\lambda}dv\right\}dz_1dz_2\\
      &= \int_{[0,1]^2}\frac{e^{-(z_1-z_2)^2/4\lambda}}{\sqrt{2\pi \lambda}}\left\{\int_{\R}\frac{1}{\sqrt{2\pi \lambda}}\exp(-\frac{1}{\lambda}(v-\tfrac{z_1+z_2}{2})^2)dv\right\}dz_1dz_2\\
      &= \int_{[0,1]^2}\frac{1}{\sqrt{4\pi \lambda}}\exp(-\frac{(z_1-z_2)^2}{4\lambda})dz_1dz_2\\
      &= \int_{[0,1]}\left\{\int_{[0,1]}\frac{1}{\sqrt{2\pi (2\lambda)}}\exp(-\frac{(z_1-z_2)^2}{2(2\lambda)})dz_2\right\}dz_1\\
      &= \int_{[0,1]}\mathbb{P}[0\leq \sqrt{2\lambda}N+z_1\leq 1]dz_1,
\end{align*}
where $N$ is a standard Normal random variable. Then
\begin{align*}
    A &= \E\left[\int_{[0,1]}\chi_{[-\sqrt{2\lambda}N,1-\sqrt{2\lambda}N]}(z_1)dz_1\right] \\
      &= \E\left[(1-\sqrt{2\lambda}N)\chi_{[0,1]}(\sqrt{2\lambda}N)+(1+\sqrt{2\lambda}N)\chi_{[-1,0]}(\sqrt{2\lambda}N)\right]\\
      &= 2\E\left[(1-\sqrt{2\lambda}N)\chi_{[0,1]}(\sqrt{2\lambda}N)\right].
\end{align*}
The last equality follows by the symmetry of the distribution of $N$. Now replace $\lambda$ by its value, since $x, y\in [a,b]$ we obtain
\begin{align*}
A=2\E\left[(1-\frac{\sqrt{2t}}{x-y}N)\chi_{[0,\frac{x-y}{\sqrt{2t}}]}(N)\right]
&\leq 2\E\left[(1-\frac{\sqrt{2t}}{b-a}N)\chi_{[0,\frac{b-a}{\sqrt{2t}}]}(N)\right]\\
&\leq 2\mathbb{P}[0\leq N \leq\frac{\sqrt{2t}}{b-a}]<1.
\end{align*}
We then get $0\leq A<1$, and this finishes the proof of the lemma \ref{lemma1}.
\end{proof}
Consequently, we have
\begin{prop}
For all $t>0$ and $0\leq p<1$, we have
$$\int_a^b\int_a^b[\E(u(t,x)-u(t,y))^2]^{-(p+1)/2}dxdy<\infty .$$
\label{prop2}
\end{prop}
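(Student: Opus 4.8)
The plan is to reduce the claimed finiteness to a purely deterministic integrability question by invoking the lower bound just established in Lemma~\ref{lemma1}. Indeed, for fixed $t>0$ that lemma provides a constant $c_t>0$ with
$$\E(u(t,x)-u(t,y))^2\geq c_t|x-y|,\qquad x,y\in[a,b].$$
Since the exponent $-(p+1)/2$ is negative, raising both sides to this power reverses the inequality and yields
$$[\E(u(t,x)-u(t,y))^2]^{-(p+1)/2}\leq c_t^{-(p+1)/2}\,|x-y|^{-(p+1)/2}.$$
It therefore suffices to show that the deterministic double integral $\int_a^b\int_a^b|x-y|^{-(p+1)/2}\,dx\,dy$ is finite.

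First I would fix the exponent $\beta:=(p+1)/2$ and observe that the hypothesis $0\leq p<1$ forces $\beta\in[1/2,1)$; in particular $\beta<1$. Next I would carry out the inner integration in $y$ by the substitution $u=y-x$, bounding it uniformly in $x\in[a,b]$:
$$\int_a^b|x-y|^{-\beta}\,dy=\int_{a-x}^{b-x}|u|^{-\beta}\,du\leq 2\int_0^{b-a}u^{-\beta}\,du=\frac{2(b-a)^{1-\beta}}{1-\beta},$$
where the convergence at the origin of the last integral is exactly the place where $\beta<1$ is used. Integrating this uniform bound over $x\in[a,b]$ then gives
$$\int_a^b\int_a^b|x-y|^{-\beta}\,dx\,dy\leq\frac{2(b-a)^{2-\beta}}{1-\beta}<\infty,$$
and combining with the pointwise majorant above completes the argument.

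There is essentially no serious obstacle here: the entire content has been front-loaded into the lower bound of Lemma~\ref{lemma1}, and what remains is the classical fact that the diagonal singularity $|x-y|^{-\beta}$ is integrable over a square precisely when $\beta<1$. The only point meriting care is tracking the sign: because the power is negative, one must remember that the lower bound on the variance becomes an upper bound on the integrand, so it is the lower estimate in Lemma~\ref{lemma1}---not the upper one---that is the relevant input. This is the exact spatial analogue of Proposition~\ref{prop 2}, where the temporal lower bound $C_1\sqrt{t-s}$ played the same role.
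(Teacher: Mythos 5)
Your proof is correct and follows exactly the route the paper intends: the paper states Proposition~\ref{prop2} as an immediate consequence of the lower bound in Lemma~\ref{lemma1}, precisely mirroring the temporal case (Proposition~\ref{prop 2}), where the lower bound $C_1\sqrt{t-s}$ plays the same role. Your explicit verification that $|x-y|^{-(p+1)/2}$ is integrable over $[a,b]^2$ when $(p+1)/2<1$ simply fills in the elementary step the paper leaves unwritten.
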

\begin{prop}
    For all $t>0$, there exists a square integrable version of the local time of $(u(t,x),\;x\in [a,b])$. We denote this version by $(L(\xi,y),\;y\in [a,b],\; \xi\in \R)$, where $L(\xi,y):=L(\xi,[a,y])$.
\end{prop}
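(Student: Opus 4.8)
The plan is to recognize this as an immediate application of the Gaussian version of Berman's criterion recalled in the Preliminaries, once the integrability estimate just established is in hand. First I would note that, for fixed $t>0$, the map $x\mapsto u(t,x)$ is a centered Gaussian process: by \eqref{mild solution} each $u(t,x)$ is a Wiener integral of a deterministic kernel against the space-time white noise $W$, so the family $(u(t,x),\,x\in[a,b])$ is a centered Gaussian field indexed by the space variable, to which the Gaussian criterion applies.

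Next I would invoke Berman's criterion for centered Gaussian processes, which guarantees the almost sure existence of a square integrable local time $L(\xi,B)$ for every $B\in\mathcal{B}([a,b])$ as soon as
$$\int_a^b\int_a^b\left[\E(u(t,x)-u(t,y))^2\right]^{-1/2}\,dx\,dy<\infty.$$
This is exactly the content of Proposition \ref{prop2} specialized to $p=0$. The criterion is stated in the Preliminaries over $[0,T]$, but its proof is insensitive to which compact interval carries the index set, so it applies verbatim with the space variable ranging over $[a,b]$. Taking $B=[a,y]$ then produces the announced version $(L(\xi,y),\,y\in[a,b],\,\xi\in\R)$ with $L(\xi,y):=L(\xi,[a,y])$.

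I expect essentially no obstacle here: the substantive work, namely the lower bound $c_t|x-y|\le \E(u(t,x)-u(t,y))^2$ from Lemma \ref{lemma1} that forces convergence of the singular integral, has already been carried out in Proposition \ref{prop2}. The only remaining points are the routine verifications that the spatial process is centered and Gaussian and that the harmless replacement of the index interval $[0,T]$ by $[a,b]$ in the statement of the criterion changes nothing in the argument.
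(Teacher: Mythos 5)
Your proposal is correct and follows essentially the same route as the paper: the paper's proof is precisely the combination of Proposition \ref{prop2} with Berman's theory from the Preliminaries. The only cosmetic difference is that the paper cites Theorem \ref{thmberman} (with $0\leq p<1$, so $[p/2]=0$ derivatives), whereas you invoke the Berman criterion stated as a Proposition in the Preliminaries, which is in fact the more directly apt reference since it yields the local time $L(\xi,B)$ on arbitrary Borel sets $B$, as needed to set $L(\xi,y)=L(\xi,[a,y])$.
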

\begin{proof}
    It is a consequence of Proposition \ref{prop2}, together with Theorem \ref{thmberman}.
\end{proof}
\subsubsection{Regularity of the local time}
In order to study the regularity of the local time, we need to recall the fundamental tool for that, the strong local nondeterminism concept (SLND). This notion was introduced by Cuzick and DuPreez in \cite{Cuzick} (see also \cite{Xiao1}), and used by many authors to investigate the law of iterated logarithm, Chung's law of the iterated logarithm, modulus of continuity for various Gaussian processes.

\begin{defn}
    Let $\{X_t,\;t\in I\}$ be a gaussian stochastic process with $0<\E(X_t^2)<\infty$ for any $t\in J$ where $J$ is a subinterval of $I$. Let $\phi$ be a function such that $\phi(0)=0$ and $\phi(r)>0$ for all $r>0$. Then $X$ is SLND on $J$ if there exist constants $K>0$ and $r_0>0$ such that for all $t\in J$ and all $0<r\leq \min\{|t|,r_0\}$,
    $$\var(X_t|X_s\;:\;s\in J,\; r\leq|s-t|\leq r_0 )\geq K \phi(r). $$
\end{defn}
\begin{thm}\label{SLND of u on x}
    For all $t>0$, there exists a positive constant $K=K(t,a,b)$, such that for all $0<r\leq |b-a|$, we have
    \begin{equation}
    \var\left(u(t,y)|u(t,x):x\in [a,b],\; r\leq|y-x|\leq |b-a|\right )\geq Kr.
    \label{VarCond}
    \end{equation}
\end{thm}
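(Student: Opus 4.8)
The plan is to exploit the stationarity of the spatial process together with a spectral (harmonizable) representation. From Lemma \ref{estimation GG x} the centred Gaussian process $x\mapsto u(t,x)$ has covariance $\E[u(t,x)u(t,x')]=F(x-x')$ with $F$ as in \eqref{F}, so it is \emph{stationary}. Interchanging the $r$- and $u$-integrals and using $\int_{\R}e^{-u^2/(4r)}e^{-i\xi u}\,du=2\sqrt{\pi r}\,e^{-r\xi^2}$ would give the Fourier transform $\widehat F(\xi)=\int_0^t e^{-r\xi^2}\,dr=(1-e^{-t\xi^2})/\xi^2$, hence $u(t,\cdot)$ admits the spectral density
$$f_t(\xi)=\frac{1}{2\pi}\,\frac{1-e^{-t\xi^2}}{\xi^2}.$$
Writing the harmonizable representation $u(t,x)=\int_{\R}e^{i\xi x}\sqrt{f_t(\xi)}\,\widetilde W(d\xi)$ for a suitable complex Gaussian measure $\widetilde W$, one obtains for all reals $a_1,\dots,a_n$ and points $x_1,\dots,x_n$
$$\E\Big(u(t,y)-\sum_{j=1}^n a_j u(t,x_j)\Big)^2=\int_{\R}\Big|e^{i\xi y}-\sum_{j=1}^n a_j e^{i\xi x_j}\Big|^2 f_t(\xi)\,d\xi.$$
Since for a Gaussian family the conditional variance equals the squared $L^2(\Omega)$-distance to the closed linear span, the left-hand side of \eqref{VarCond} is the infimum of the integral above over all finite combinations with $x_j\in A:=\{x\in[a,b]:\,r\le|y-x|\le|b-a|\}$, and it suffices to bound this infimum below by $Kr$ uniformly in $n$, the $a_j$ and the $x_j\in A$.

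To this end I would apply Cauchy--Schwarz in the weighted space $L^2(f_t\,d\xi)$: for every $w$ with $w/f_t\in L^2(f_t\,d\xi)$,
$$\int_{\R}\Big|e^{i\xi y}-\sum_j a_j e^{i\xi x_j}\Big|^2 f_t(\xi)\,d\xi\ \ge\ \frac{\big|\int_{\R}(e^{i\xi y}-\sum_j a_j e^{i\xi x_j})\,\overline{w(\xi)}\,d\xi\big|^2}{\int_{\R}|w(\xi)|^2/f_t(\xi)\,d\xi}.$$
I would fix a smooth, even probability density $\delta$ supported in $(-1,1)$ and take the probe $w(\xi)=e^{i\xi y}\widehat\delta(r\xi)$. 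Then $\int_{\R}e^{i\xi x}\,\overline{w(\xi)}\,d\xi=\tfrac{2\pi}{r}\,\delta\big(\tfrac{x-y}{r}\big)$, which vanishes whenever $|x-y|\ge r$, hence for every $x_j\in A$; the numerator therefore collapses to the constant $\big|\int_{\R}\widehat\delta(r\xi)\,d\xi\big|^2=(2\pi\delta(0)/r)^2$, independent of the $a_j$.

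For the denominator the elementary inequality $1+u\le e^u$ yields $\xi^2/(1-e^{-t\xi^2})\le 1/t+\xi^2$, so $1/f_t(\xi)\le 2\pi(1/t+\xi^2)$; after the substitution $\eta=r\xi$ and using that $\widehat\delta$ is Schwartz,
$$\int_{\R}\frac{|w(\xi)|^2}{f_t(\xi)}\,d\xi\le\frac{2\pi}{r}\int_{\R}\widehat\delta(\eta)^2\Big(\frac1t+\frac{\eta^2}{r^2}\Big)d\eta=\frac{C_1(t)}{r}+\frac{C_2}{r^3}\le\frac{C(t,a,b)}{r^3},$$
the last step using $r\le|b-a|$. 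Combining the three displays gives $\var(\cdots)\ge (2\pi\delta(0))^2/(C(t,a,b)\,r^3)\cdot r^2=K(t,a,b)\,r$, which is precisely \eqref{VarCond}. I expect the only delicate point to be the rigorous set-up of the spectral machinery — identifying $f_t$, justifying the harmonizable representation and the reduction of the conditional variance (given an uncountable family) to the finite-span spectral minimisation, and checking the admissibility $w/f_t\in L^2(f_t\,d\xi)$ — whereas the dilated-bump choice $\widehat\delta(r\,\cdot)$ and the resulting $r^{-2}$-versus-$r^{-3}$ scaling of numerator and denominator are routine once that framework is in place.
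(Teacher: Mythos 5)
Your proposal is correct and takes essentially the same route as the paper: the paper also reduces the conditional variance to an infimum over finite linear combinations, obtains the spectral form $\frac{1}{2\pi}\int_{\R}\bigl|e^{iyu}-\sum_k a_k e^{ix_ku}\bigr|^2\frac{1-e^{-tu^2}}{u^2}du$ directly via Parseval's identity, tests it against the same dilated compactly supported bump (there $\widehat\varphi(ru)e^{-iyu}$, with $\varphi_r(y-x_k)=0$ killing all cross terms) and applies Cauchy--Schwarz to get the identical $r^{-2}$-versus-$r^{-3}$ scaling. The only cosmetic difference is in bounding the denominator: the paper uses the substitution $v=ru$ together with monotonicity in $r\leq|b-a|$, whereas you use the pointwise bound $1/f_t(\xi)\leq 2\pi(1/t+\xi^2)$ before substituting.
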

\begin{proof}
     It is enough to show that there exists a constant $K>0$ such that,
\begin{equation}
     \E\left(u(t,y)-\displaystyle\sum_{k=1}^n a_k u(t,x_k)\right)^2\geq Kr,
     \label{to prove}
     \end{equation}
for all integers $n\geq 1$, $(a_k)_{1}^{n}\in \R$ and $(x_k)_{1}^{n}\in [a,b]$ : $r\leq|y-x_k|\leq |b-a|$,
$\forall k\leq n$.\\
Parseval's identity implies
    \begin{equation}
\begin{split}
        &\E\left(u(t,y)-\displaystyle\sum_{k=1}^n a_k u(t,x_k)\right)^2\\
        &=\int_0^t\int_{\R}\left(G(t-s,y-z)-\displaystyle\sum_{k=1}^n a_kG(t-s,x_k-z)\right)^2dz ds\\
        &=\frac{1}{2\pi}\int_0^t\int_{\R}\left|\exp(iyu)-\sum_{k=1}^na_k\exp(ix_ku)\right|^2 \exp(-su^2)duds\\
        &=\frac{1}{2\pi}\int_{\R}\left|\exp(iyu)-\sum_{k=1}^na_k\exp(ix_ku)\right|^2 \frac{1-\exp(-tu^2)}{u^2}du:=Q(r).
        \label{E()2}
    \end{split}
\end{equation}
    So we just need to prove that $Q(r)\geq K r$.\\
  Let $\varphi:\R\to [0,1]$ be a function in $C^{\infty}(\R)$ such that $\varphi(0)=1$ and $supp(\varphi )\subset ]0,1[$. Denote by $\hat{\varphi}$ the Fourier transform of $\varphi$. Then $\hat{\varphi}\in C^{\infty}(\R)$ and $\hat{\varphi}(u)$ decays rapidly as $|u|\to \infty$.
    Set
    $$\varphi_r(\theta)=r^{-1}\varphi(r^{-1}\theta).$$
    By the inversion theorem we have
    \begin{equation}
    \varphi_r(\theta)=\frac{1}{2\pi}\int_{\R}e^{-iu\theta}\widehat{\varphi}(ru)du.
    \label{inversion}
    \end{equation}
    Since $r\leq|y-x_i|$ and $supp(\varphi)\subset ]0,1[$, we have $\varphi_r(y-x_i)=0$ for any $ k=1,...,n$. This and \eqref{inversion} imply that
    \begin{equation}
\begin{split}
        B   &:=\int_{\R}(\exp(iyu)-\sum_{k=1}^na_k\exp(ix_ku))\exp(-iyu)\widehat{\varphi}(ru)du\\
            &=2\pi(\varphi_r(0)-\sum_{k=1}^na_k\varphi_r(y-x_k))=2\pi r^{-1}.
            \label{B=}
        \end{split}
    \end{equation}
    On the other hand, by \eqref{E()2} and H\"{o}lder inequality, we obtain
    \begin{align*}
    B^2 &\leq \int_{\R}\left|\exp(iyu)-\sum_{k=1}^na_k\exp(ix_ku)\right|^2\frac{1-\exp(-tu^2)}{u^2}du\\
    &\qquad\times\int_{\R}\frac{u^2}{1-\exp(-tu^2)}|\widehat{\varphi}(ru)|^2du\\
        &=\E\left(u(t,y)-\displaystyle\sum_{k=1}^n a_k u(t,x_k)\right)^2\times\int_{\R}\frac{u^2}{1-\exp(-tu^2)}|\widehat{\varphi}(ru)|^2du\\
        &\leq \E\left(u(t,y)-\displaystyle\sum_{k=1}^n a_k u(t,x_k)\right)^2\frac{1}{r^3}\int_{\R}\frac{v^2}{1-\exp(-\frac{tv^2}{|b-a|^2})}|\widehat{\varphi}(v)|^2dv,
    \end{align*}
    where last inequality is justified by the change of variable $v=ru$ and $0<r\leq |b-a|$. So by \eqref{B=} we get
    $$4\pi^2 \frac{1}{r^2}\leq \E\left(u(t,y)-\displaystyle\sum_{k=1}^n a_k u(t,x_k)\right)^2\frac{1}{r^3}K,$$
    where
    $$K=\int_{\R}\frac{v^2}{1-\exp(-\frac{tv^2}{|b-a|^2})}|\widehat{\varphi}(v)|^2dv.$$
    Finally, \eqref{to prove} holds. This finishes the proof of Theorem \ref{SLND of u on x}.
\end{proof}
\begin{lem}\label{lem}
    Let $y,y+h\in [a, b]$. For any even positive integer $n$, we have
    \begin{equation}\label{Esperence in t}
      \E[L(\xi,y+h)-L(\xi,y)]^n\leq C_n |h|^{n/2},
    \end{equation}
    where $C_n$ is a positive constant.
\end{lem}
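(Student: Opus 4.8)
The plan is to mirror the strategy of Proposition \ref{prop 4}, which simplifies considerably here because only a spatial increment (and not an increment in $\xi$) is involved, so no factors $\prod|u_j|^{\zeta}$ appear. Writing $X_x:=u(t,x)$ for the fixed time $t>0$, the starting point is the moment identity recalled in the preliminaries, namely
\[\E[L(\xi,y+h)-L(\xi,y)]^n=(2\pi)^{-n}\int_{[y,y+h]^n}\int_{\R^n}e^{-i\xi\sum_{j=1}^n u_j}\,\E\Big[e^{i\sum_{j=1}^n u_j X_{x_j}}\Big]\,d\overline{u}\,d\overline{x}.\]
Since $X$ is centred Gaussian, the inner characteristic function equals $\exp(-\tfrac12\var(\sum_j u_j X_{x_j}))$, which is real and positive; taking absolute values and using $|e^{-i\xi\sum u_j}|=1$ removes the oscillatory factor (the evenness of $n$ guarantees the left-hand side is nonnegative, so the bound is meaningful).

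First I would restrict the outer integration to the ordered simplex $y<x_1<\cdots<x_n<y+h$ at the cost of a factor $n!$, and then perform the Abel-type change of variables $u_n=v_n$, $u_j=v_j-v_{j+1}$ for $1\le j\le n-1$, which turns $\sum_j u_j X_{x_j}$ into $\sum_{j=1}^n v_j(X_{x_j}-X_{x_{j-1}})$ with the convention $X_{x_0}=0$. Next I would invoke the strong local nondeterminism of Theorem \ref{SLND of u on x} (which in particular implies local nondeterminism) together with \cite[Lemma 2.3]{Berman2} to obtain a constant $C_n>0$ with
\[\var\Big(\sum_{j=1}^n v_j(X_{x_j}-X_{x_{j-1}})\Big)\ \ge\ C_n\sum_{j=1}^n v_j^2\,\sigma^2(j),\qquad \sigma^2(j):=\var(X_{x_j}-X_{x_{j-1}}).\]
Inserting this into the exponential and carrying out the resulting product of one-dimensional Gaussian integrals in the variables $v_j$ produces a factor $\prod_{j=1}^n\sigma(j)^{-1}$, up to a constant depending only on $n$.

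It then remains to bound the simplex integral $\int_{y<x_1<\cdots<x_n<y+h}\prod_{j=1}^n\sigma(j)^{-1}\,d\overline{x}$. For $j\ge2$, Lemma \ref{lemma1} gives $\sigma^2(j)\ge c_t(x_j-x_{j-1})$, while for $j=1$ one has $\sigma^2(1)=\var(u(t,x_1))$, a strictly positive constant independent of $x_1$; since $x_1-y\le h\le|b-a|$ is bounded, this constant dominates $c(x_1-y)$ for a suitable $c>0$. Hence, with the convention $x_0=y$, one gets $\sigma^2(j)\ge c(x_j-x_{j-1})$ for every $j$, so that $\prod_j\sigma(j)^{-1}\le c^{-n/2}\prod_j(x_j-x_{j-1})^{-1/2}$. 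The elementary simplex formula
\[\int_{y<x_1<\cdots<x_n<y+h}\prod_{j=1}^n(x_j-x_{j-1})^{-1/2}\,d\overline{x}=h^{n/2}\,\frac{\Gamma(1/2)^n}{\Gamma(1+n/2)}\]
(the computation recalled in the proof of Proposition \ref{prop 4} with $b_j=\tfrac12$) then yields $\E[L(\xi,y+h)-L(\xi,y)]^n\le C_n|h|^{n/2}$, as claimed.

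The main obstacle I anticipate is the passage from the conditional-variance formulation of strong local nondeterminism in Theorem \ref{SLND of u on x} to the quadratic lower bound on $\var(\sum_j v_j(X_{x_j}-X_{x_{j-1}}))$: one must check that this comparison with $\sum_j v_j^2\sigma^2(j)$ holds uniformly over the simplex, and that the boundary term $j=1$ (where $X_{x_0}=0$ is not a genuine increment) is correctly absorbed using the positivity and boundedness of $\var(u(t,x_1))$. Everything else reduces to a routine Gaussian computation and the elementary simplex integral.
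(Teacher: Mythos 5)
Your route is genuinely different from the paper's proof of this lemma, and as written it has a gap exactly at the step you flag as the main obstacle. The paper never passes to increments: it keeps the integral over $I^n$ (no ordering, no factor $n!$), evaluates the Gaussian $d\overline{u}$-integral exactly as $(2\pi)^{n/2}\left[\det\cov(u(t,x_1),\dots,u(t,x_n))\right]^{-1/2}$, factorizes the determinant by Berman's identity $\det\cov=\var(u(t,x_1))\prod_{j=2}^{n}\var\left(u(t,x_j)\,|\,u(t,x_1),\dots,u(t,x_{j-1})\right)$, and bounds each conditional variance below by $K\min_{1\leq i<j}|x_j-x_i|$ directly from Theorem \ref{SLND of u on x}, since conditioning on finitely many points gives a larger variance than conditioning on the whole field at distance at least $\min_{i<j}|x_j-x_i|$. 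The resulting integrand $|x_1-a|^{-1/2}\prod_{j\geq 2}(\min_{i<j}|x_j-x_i|)^{-1/2}$ is then integrated over $I^n$ in the order $dx_n,\dots,dx_1$. No change of variables, no Lemma 2.3 of \cite{Berman2}, and no ordering of the points is needed.

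The gap in your version is that the inequality $\var\left(\sum_{j}v_j(X_{x_j}-X_{x_{j-1}})\right)\geq C_n\sum_j v_j^2\sigma^2(j)$ with the convention $X_{x_0}=0$ is not covered by \cite[Lemma 2.3]{Berman2}: that lemma concerns genuine increments of an LND process at points of its parameter interval (and carries a smallness constraint on the window containing all the points). In Proposition \ref{prop 4} the convention is harmless because $u(0,x)=0$, so $v_1X_{t_1}$ really is the increment over $[0,t_1]$ with $0\in[0,T]$; for the spatial process there is no point of $[a,b]$ at which $u(t,\cdot)$ vanishes, so $v_1X_{x_1}$ is not an increment at all. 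Appealing to "positivity and boundedness of $\var(u(t,x_1))$" does not close this: the danger is not the size of $\var(X_{x_1})$ but its correlation with the adjacent increments, which could a priori degenerate the quadratic form. The bound is in fact true, and the quickest correct proof is: the map from $(X_{x_1},\dots,X_{x_n})$ to the increments $(Y_1,\dots,Y_n)$ is unimodular, so $\det\cov(Y_1,\dots,Y_n)=\det\cov(X_{x_1},\dots,X_{x_n})\geq K^{n-1}\var(X_{x_1})\prod_{j\geq2}(x_j-x_{j-1})$ by Berman's identity and Theorem \ref{SLND of u on x}, while $\prod_j\var(Y_j)\leq\var(X_{x_1})\prod_{j\geq2}(x_j-x_{j-1})(2\pi)^{-(n-1)}$ by Lemma \ref{lemma1}; hence the correlation matrix $R$ of the increments satisfies $\det R\geq(2\pi K)^{n-1}$, and since its eigenvalues are at most $\mathrm{tr}(R)=n$, its smallest eigenvalue is at least $(2\pi K)^{n-1}/n^{n-1}$, which is your $C_n$. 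But observe that this fix is precisely the paper's determinant argument, so once you have it, the detour through ordering, increments and the simplex formula is superfluous: you may as well bound $\int_{I^n}[\det\cov]^{-1/2}d\overline{x}$ directly, as the paper does. The remaining steps of your proposal (Gaussian integration, $\sigma^2(j)\geq c(x_j-x_{j-1})$ from Lemma \ref{lemma1}, and the Beta-type simplex integral) are correct.
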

\begin{proof} For simplicity we will deal with $h>0$ such that $y+h\in [a,b]$. The other case uses the same calculation. Let $I=[y,y+h]$, then following \cite{GemanHoroviz} or \cite{Dozzi}, we have
  \begin{align*}
    \E[L(\xi,I)^n] &=(2\pi)^{-n}\int_{I^n}\int_{\R^n}e^{-i<\overline{u},\overline{\xi}>}\E\left[e^{i\sum_{k=1}^{n}u_ku(t,x_k)}\right]d\overline{u}d\overline{x}\\
                 &=(2\pi)^{-n}\int_{I^n}\int_{\R^n}e^{-i<\overline{u},\overline{\xi}>}e^{-\tfrac{1}{2}\var\left(\sum_{k=1}^{n}u_ku(t,x_k)\right)}d\overline{u}d\overline{x},
  \end{align*}
  where $\overline{\xi}=(\xi,\cdots,\xi)$ and $\overline{u}=(u_1,\cdots,u_n)$, hence
  \begin{equation}\label{E(L(x,I))}
    \E[L(\xi,I)^n]\leq (2\pi)^{-n}\int_{I^n}\int_{\R^n}e^{-\tfrac{1}{2}\var\left(\sum_{k=1}^{n}u_ku(t,x_k)\right)}d\overline{u}d\overline{x}.
  \end{equation}
On the other hand, for distinct $x_1,x_2,\cdots,x_n$, the matrix  $\cov(u(t,x_1),u(t,x_2),\cdots,u(t,x_n))$ is invertible. Then the following function is a gaussian density
    \begin{equation}\label{density}
      \frac{[\det\cov(u(t,x_1),u(t,x_2),\cdots,u(t,x_n))]^{1/2}}{(2\pi)^{n/2}}e^{-\tfrac{1}{2}\overline{u}\cov(u(t,x_1),u(t,x_2),\cdots,u(t,x_n))\overline{u}'},
    \end{equation}
    where $\overline{u}'$ denotes  the transpose of $\overline{u}$. Therefore
    \begin{equation}\label{int e^-1/2var =}
      \int_{\R^n}e^{-\tfrac{1}{2}\var\left(\sum_{k=1}^{n}u_ku(t,x_k)\right)}d\overline{u}=\frac{(2\pi)^{n/2}}{[\det\cov(u(t,x_1),u(t,x_2),\cdots,u(t,x_n))]^{1/2}}.
    \end{equation}
    Combining \eqref{E(L(x,I))} and \eqref{int e^-1/2var =}, we get
    \begin{equation}
      \E[L(\xi,I)^n]\leq (2\pi)^{-n/2}\int_{I^n}\frac{1}{[\det\cov(u(t,x_1),u(t,x_2),\cdots,u(t,x_n))]^{1/2}}d\overline{x}.
      \label{detcovint}
    \end{equation}
    It follows from (2.8) in \cite{Berman2} that
    \begin{equation}\label{det=var}
        \begin{split}
            \det & \cov(u(t,x_1),u(t,x_2),\cdots,u(t,x_n))\\
            &=\var(u(t,x_1))\prod_{j=2}^{n}\var(u(t,x_j)|u(t,x_1),\cdots,u(t,x_{j-1})).
        \end{split}
    \end{equation}
     \eqref{det=var} together with \eqref{VarCond} imply
    \begin{equation}\label{det geq}
      \det\cov(u(t,x_1),u(t,x_2),\cdots,u(t,x_n))\geq K^n |x_1-a| \prod_{j=2}^{n}\min_{1\leq i<j}|x_j-x_i|.
    \end{equation}
    By using \eqref{det geq} in \eqref{detcovint} we get
    \begin{align}
      \E[L(\xi,I)^n]&\leq C^n\int_{I^n}\frac{1}{|x_1-a|^{1/2}}\prod_{j=2}^{n}\frac{1}{\displaystyle\min_{1\leq i<j}|x_j-x_i|^{1/2}}d\overline{x}\nonumber\\
      &\leq C_{n} h^{n/2} ,
      \label{min}
    \end{align}
    where the last inequality is obtained by integrating in the order $dx_n,dx_{n-1},\cdots,dx_1$ and with the help of some elementary arguments.  This finishes the proof of the lemma \ref{lem}.
\end{proof}
\begin{lem}
  For all $\xi,\xi+k\in \R$, $y,y+h\in [a,b]$ and for all even positive integer $n$, there exists a constant $C_n>0$ such that
  $$\E[L(\xi+k,y+h)-L(\xi,y+h)-L(\xi+k,y)+L(\xi,y)]^n\leq C_n |k|^{n\delta}|h|^{n(1/2-\delta/2)},$$
  where $0<\delta<\frac{1}{2}.$
\end{lem}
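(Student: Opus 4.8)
The plan is to run the argument of the first inequality of Proposition~\ref{prop 4} with the space variable in the role of time, replacing the temporal LND and variance estimates by their spatial analogues: the SLND property of Theorem~\ref{SLND of u on x} and the two‑sided bound of Lemma~\ref{lemma1}. Writing $X_x:=u(t,x)$ for the fixed $t$ and letting $\cdots$ abbreviate the rectangular increment $L(\xi+k,y+h)-L(\xi,y+h)-L(\xi+k,y)+L(\xi,y)$, I would start from its Fourier representation (cf.\ \cite{GemanHoroviz,Dozzi})
\begin{equation*}
\E[\cdots]^n=(2\pi)^{-n}\int_{[y,y+h]^n}\int_{\R^n}\prod_{j=1}^n\left(e^{-i(\xi+k)u_j}-e^{-i\xi u_j}\right)\E\!\left[e^{i\sum_{j=1}^n u_jX_{x_j}}\right]d\overline u\,d\overline x ,
\end{equation*}
and apply $|e^{-i(\xi+k)u_j}-e^{-i\xi u_j}|=|e^{-iku_j}-1|\le 2^{1-\delta}|k|^{\delta}|u_j|^{\delta}$, valid for $0<\delta<1$. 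This bounds the left‑hand side by a constant times $|k|^{n\delta}\,T(n,\delta)$, where $T(n,\delta)=\int_{[y,y+h]^n}\int_{\R^n}\prod_j|u_j|^{\delta}\,\E[e^{i\sum u_jX_{x_j}}]\,d\overline u\,d\overline x$ is the exact spatial counterpart of the quantity $T(n,\zeta)$ treated in Proposition~\ref{prop 4}.

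Next I would perform the same two reductions: restrict the $x$‑integration to the ordered simplex $y<x_1<\cdots<x_n<y+h$ (at the cost of a factor $n!$) and change variables $u_n=v_n$, $u_j=v_j-v_{j+1}$, so that $\sum_j u_jX_{x_j}=\sum_{j=1}^n v_j(X_{x_j}-X_{x_{j-1}})$ with $x_0=y$. Writing $\sigma^2(j)=\E(X_{x_j}-X_{x_{j-1}})^2$, the characteristic function becomes $\exp(-\tfrac12\var(\sum_j v_j(X_{x_j}-X_{x_{j-1}})))$, and the SLND property of Theorem~\ref{SLND of u on x}, through Berman's inequality \cite[Lemma~2.3]{Berman2}, provides the uniform lower bound $\var(\sum_j v_j(X_{x_j}-X_{x_{j-1}}))\ge C_n\sum_j v_j^2\sigma^2(j)$. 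Expanding $\prod_{j=1}^{n-1}|v_j-v_{j+1}|^{\delta}|v_n|^{\delta}$ via $|a-b|^{\delta}\le|a|^{\delta}+|b|^{\delta}$ into a finite sum of monomials $\prod_j|v_j|^{\epsilon_j\delta}$ with $\epsilon_j\in\{0,1,2\}$ and $\sum_j\epsilon_j=n$, and substituting $w_j=\sigma(j)v_j$ to factor out a convergent Gaussian integral, reduces $T(n,\delta)$ to a constant times $\sum_{(\epsilon_j)}\int_{y<x_1<\cdots<x_n<y+h}\prod_{j=1}^n\sigma(j)^{-1-\delta\epsilon_j}\,d\overline x$.

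It then remains to insert the spatial variance bound. By Lemma~\ref{lemma1} one has $\sigma^2(j)\ge c_t|x_j-x_{j-1}|$ for $j\ge2$, while for $j=1$ the quantity $\var(u(t,x_1))$ is a positive constant, hence bounded below by a fixed constant times $|x_1-y|$ on $[a,b]$; thus $\sigma(j)^{-1-\delta\epsilon_j}\le C|x_j-x_{j-1}|^{-(1+\delta\epsilon_j)/2}$ for every $j$. Setting $b_j=(1+\delta\epsilon_j)/2$ and applying the Dirichlet--Gamma identity
\begin{equation*}
\int_{y<x_1<\cdots<x_n<y+h}\prod_{j=1}^n|x_j-x_{j-1}|^{-b_j}\,d\overline x=h^{\,n-\sum_j b_j}\,\frac{\prod_j\Gamma(1-b_j)}{\Gamma(1+n-\sum_j b_j)} ,
\end{equation*}
together with $\sum_j\epsilon_j=n$, so that $\sum_j b_j=n(1+\delta)/2$, yields the power $h^{\,n-n(1+\delta)/2}=h^{\,n(1/2-\delta/2)}$; combined with the factor $|k|^{n\delta}$ this is exactly the asserted estimate. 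The main obstacle is the integrability of this simplex integral: the identity requires $b_j<1$ for every $j$, and since $\epsilon_j$ may equal $2$ this forces $(1+2\delta)/2<1$, i.e.\ $\delta<\tfrac12$. This is the essential difference with the temporal case of Proposition~\ref{prop 4}: the spatial scaling $\sigma^2(j)\asymp|x_j-x_{j-1}|$ (rather than $|t_j-t_{j-1}|^{1/2}$) sharpens the admissible range from $\zeta<1$ to $\delta<\tfrac12$. Apart from this, the only genuinely new inputs are the SLND property and the sharp variance estimate of Lemma~\ref{lemma1}, both already established.
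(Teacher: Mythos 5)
Your proposal is correct and follows essentially the same route as the paper: the paper's proof of this lemma is precisely the instruction to repeat the techniques of Proposition~\ref{prop 4}, and that is what you execute, substituting the SLND property of Theorem~\ref{SLND of u on x} and the two-sided variance bound of Lemma~\ref{lemma1} for their temporal counterparts. Your closing observation, that the spatial scaling $\sigma^2(j)\asymp|x_j-x_{j-1}|$ (rather than $|t_j-t_{j-1}|^{1/2}$) is what forces the restriction $\delta<\tfrac12$, correctly accounts for the range of $\delta$ in the statement.
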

\begin{proof}
  The proof uses the same techniques as those of Proposition \ref{prop 4}.
\end{proof}
   We can deduce by classical arguments (cf. Berman \cite[Theorem 8.1.]{Berman2} or Geman-J. Horowitz \cite[Theorem 26.1]{GemanHoroviz}) the following regularity result on the local time of the process $(u(t,x),\;x\in [a,b])$
  \begin{thm}\label{joint continuity in x}
    For any fixed $t>0$, the process $(u(t,x),\;x\in [a,b])$ has almost surely, a jointly continuous local time $(L(\xi, y), \;\xi\in\R ,  y\in [a,b])$. It satisfies a.s. a $\gamma$-H\"{o}lder condition in $y$, uniformly in $\xi$, for every $\gamma<\tfrac{1}{2}$ : there exist random variables $\eta$ and $\eta'$ which are almost surely positive and finite such that
    \begin{equation}\label{holder conddition in y}
      \sup_{\xi}|L(\xi,y+h)-L(\xi,y)|\leq \eta'|h|^{\gamma},
    \end{equation}
    for all $y$, $y+h\in [a,b]$ and all $|h|<\eta$.
  \end{thm}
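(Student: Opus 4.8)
The plan is to invoke the standard Geman--Horowitz / Berman machinery (cf. \cite[Theorem 26.1]{GemanHoroviz}, \cite[Theorem 8.1.]{Berman2}), for which the two moment estimates just established---Lemma \ref{lem} for the increments in $y$ and the preceding lemma for the mixed second differences in $(\xi,y)$---are precisely the required hypotheses. The argument proceeds in two stages: first I would produce a jointly continuous version of $L$, then upgrade to the uniform-in-$\xi$ H\"older estimate \eqref{holder conddition in y}.

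For joint continuity, I would apply a two-parameter Kolmogorov continuity theorem to $(\xi,y)\mapsto L(\xi,y)$ on each rectangle $[-N,N]\times[a,b]$. The mixed-difference estimate of the preceding lemma controls the rectangular increments in both variables simultaneously: choosing the even integer $n$ large enough that $n\delta>1$ and $n(1/2-\delta/2)>1$, each marginal exponent exceeds the corresponding coordinate dimension, so the rectangular-increment Kolmogorov criterion yields a version continuous on $[-N,N]\times[a,b]$. A preliminary observation handles the unboundedness of $\xi$: since $u(t,\cdot)$ is a.s. continuous on the compact $[a,b]$, its range is a.s. a bounded interval, hence the occupation measure---and therefore $L(\cdot,y)$---is a.s. supported in a bounded $\xi$-set and vanishes for $|\xi|$ large. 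Patching the continuous versions over $N\in\N$ then produces a single jointly continuous version on $\R\times[a,b]$.

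For the uniform H\"older condition \eqref{holder conddition in y}, I would run the usual dyadic chaining argument. Fixing $\gamma<1/2$, I would take $\delta<1/2$ close enough to $1/2$ and $n$ large; writing $y$-increments over dyadic scales $2^{-m}$ and resolving $\xi$ on a dyadic net of the bounded support, the single-difference estimate of Lemma \ref{lem} controls $L(\xi,y+h)-L(\xi,y)$ at the net points, while the mixed-difference estimate of the preceding lemma controls the fluctuation of these increments as $\xi$ varies within the net. Combining the two through Markov's inequality and the Borel--Cantelli lemma yields, almost surely, a bound of the form $\sup_\xi|L(\xi,y+h)-L(\xi,y)|\le \eta'|h|^\gamma$ valid for all $|h|<\eta$, with $\eta,\eta'$ a.s. positive and finite.

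I expect the main obstacle to be the uniform-in-$\xi$ passage in the second stage: taking the supremum over the continuum of $\xi$ while preserving the $|h|^\gamma$ rate in $y$ is exactly what forces the use of the mixed second-difference estimate (rather than Lemma \ref{lem} alone) and the splitting of the $\xi$-exponent $\delta$ from the $y$-exponent $1/2-\delta/2$. The compact-support reduction for $\xi$ is what keeps the net finite and the Borel--Cantelli series summable, so that the supremum over $\xi$ can be controlled on a single almost-sure event.
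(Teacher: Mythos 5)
Your proposal is correct and takes essentially the same route as the paper: the paper deduces the theorem from Lemma \ref{lem} and the mixed second-difference lemma by appealing to the classical arguments of Berman \cite[Theorem 8.1.]{Berman2} and Geman--Horowitz \cite[Theorem 26.1]{GemanHoroviz}, which are precisely the Kolmogorov-type and dyadic chaining steps you spell out. The only difference is one of detail, not of method: you unpack the classical machinery (compact support of $L$ in $\xi$, two-parameter Kolmogorov criterion, chaining with Markov and Borel--Cantelli) that the paper delegates to the cited references.
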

   We also have, by \cite[Lemma 2.1.]{BLD}, the following Besov regularity of the local time $L(\xi,y)$ in the space variable $y$
\begin{thm}
  For all $\lambda>0$ and $p>\frac{1}{\lambda}$,
  $$\mathbb{P}\left(L(\xi,.)\in \mathcal{B}^{\omega_{\lambda}}_p\right)=1,$$
  where $\omega_{\lambda}(t)=t^{1/2}(\log(1/t))^{\lambda}$ and $L(\xi,.)$ is the sample paths $y\to L(\xi,y)$, $y\in [0,1]$.
\end{thm}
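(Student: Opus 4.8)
The plan is to follow the argument of Theorem~\ref{LocalBesov} verbatim, with the temporal increment bound of Proposition~\ref{prop 4} replaced by its spatial counterpart, Lemma~\ref{lem}. The only analytic ingredient is already available: for every even integer $n$, Lemma~\ref{lem} provides a constant $C_n$ with $\E[L(\xi,y+h)-L(\xi,y)]^n\leq C_n|h|^{n/2}$, uniformly in $y$. This is precisely the hypothesis of the Besov-space version of Kolmogorov's continuity theorem \cite[Lemma 2.1.]{BLD}, now with H\"older exponent $1/2$ in place of $3/4$, and it delivers $\mathbb{P}(L(\xi,\cdot)\in\mathcal{B}^{\omega_\lambda}_p)=1$ for $\omega_\lambda(t)=t^{1/2}(\log(1/t))^\lambda$ and every $p>1/\lambda$.

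To exhibit the conversion explicitly, I would pass through the sequential characterization of Theorem~\ref{Besov} (legitimately applied with $\alpha=1/2$, so $p>2$, which is subsumed in the regime $p>1/\lambda$ of interest). Writing $L_{jk}$ for the dyadic second-difference coefficients of $y\mapsto L(\xi,y)$, membership in $\mathcal{B}^{\omega_\lambda}_p$ is equivalent to the almost sure finiteness of $\sup_j \frac{2^{-j(1/2+1/p)}}{\omega_\lambda(2^{-j})}\bigl[\sum_{k=1}^{2^j}|L_{jk}|^p\bigr]^{1/p}$. Since $\omega_\lambda(2^{-j})=2^{-j/2}(j\log 2)^\lambda$, the prefactor collapses to $2^{-j/p}(j\log 2)^{-\lambda}$, and I am reduced to showing $\sup_j (j\log 2)^{-\lambda}S_j^{1/p}<\infty$ almost surely, where $S_j:=2^{-j}\sum_{k=1}^{2^j}|L_{jk}|^p$.

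Next I would note that $L_{jk}$ equals $2^{j/2}$ times a difference of two first increments of $L(\xi,\cdot)$ over adjacent intervals of length $2^{-(j+1)}$; plugging these into Lemma~\ref{lem} and using $|a-b|\leq|a|+|b|$ gives $\E|L_{jk}|^N\leq \tilde C_N$ uniformly in both $j$ and $k$, for every even $N$. A generalized H\"older inequality then discards all correlation terms and yields $\E S_j^m\leq C_m$, again uniformly in $j$, for every even $m$. Markov's inequality gives $\mathbb{P}(S_j>A(j\log 2)^{p\lambda})\leq C_m A^{-m}(j\log 2)^{-mp\lambda}$, and $\sum_j(j\log 2)^{-mp\lambda}<\infty$ because $p\lambda>1$ forces $mp\lambda>1$ for every $m\geq 1$. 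Borel--Cantelli then caps $S_j^{1/p}/(j\log 2)^\lambda$ almost surely, which is the claim.

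The one genuinely delicate estimate, the spatial moment bound of Lemma~\ref{lem}---itself resting on the strong local nondeterminism of Theorem~\ref{SLND of u on x}---is already in hand, so I expect no real obstacle here. The only points requiring attention are verifying that the uniform moment bound on the second differences $L_{jk}$ survives for arbitrarily large even exponents and checking that the threshold $p\lambda>1$ is exactly what renders the Borel--Cantelli series summable; both are settled by the estimates above.
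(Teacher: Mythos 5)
Your proposal is correct, and its core is exactly the paper's proof: the paper obtains this theorem in one line by feeding the spatial moment bound of Lemma \ref{lem} into the Besov--Kolmogorov criterion of \cite[Lemma 2.1.]{BLD}, precisely as Theorem \ref{LocalBesov} was obtained from Proposition \ref{prop 4}; your first paragraph is that argument verbatim.

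Your supplementary explicit derivation through Theorem \ref{Besov} is essentially a sound reconstruction of that criterion (note it implicitly needs the a.s. continuity of $y\mapsto L(\xi,y)$ from Theorem \ref{joint continuity in x} to pass legitimately to the dyadic coefficients), but one parenthetical is backwards: the hypothesis $p>2$ required by Theorem \ref{Besov} when $\alpha=1/2$ is \emph{not} implied by $p>1/\lambda$. When $\lambda>1/2$ the statement also covers $p\in(1/\lambda,\,2]$, a range your dyadic argument does not reach directly. The slip is harmless: any $p'>2$ automatically satisfies $p'\lambda>1$, so the case $p\leq 2$ follows from the case $p'>2$ via the embedding $\mathcal{B}^{\omega_\lambda}_{p'}\hookrightarrow\mathcal{B}^{\omega_\lambda}_{p}$ on $[0,1]$, which holds because H\"older's inequality on a set of measure at most one gives $\Delta_{p}(f,I)(t)\leq \Delta_{p'}(f,I)(t)$ and $\|f\|_{L^{p}}\leq\|f\|_{L^{p'}}$.
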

For fixed $t>0$, let
$M_t(\xi)=\{x\in [a,b]\;:\;u(t,x)=\xi\} $
be the $\xi$-level set of the process $ (u(t,x)\,,\, x\in [a,b]) $. Proceeding in the same way as for
$ (u(t,x)\,,\, t\in [0,T]) $, we have
\begin{cor}
  For all $t>0$, we have for almost every $x_0$
  $$\dim_H(M_t(u(t,x_0)))= \frac{1}{2}\quad a.s.$$
\end{cor}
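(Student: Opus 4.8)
The plan is to establish the equality by proving matching lower and upper bounds, exactly in the spirit of the corollary already proved for $(u(t,x),\,t\in[0,T])$, replacing the exponent $3/4$ by $1/2$.

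First I would obtain the lower bound $\dim_H(M_t(u(t,x_0)))\geq \frac12$. By \cite[Lemma 1.1.]{Berman1}, for almost every $x_0\in[a,b]$ one has $L(u(t,x_0),[a,b])>0$, and $L(u(t,x_0),\cdot)$ is a finite measure carried by the level set $M_t(u(t,x_0))$. Theorem \ref{joint continuity in x}, and in particular the H\"older estimate \eqref{holder conddition in y}, shows that $y\mapsto L(u(t,x_0),y)$ satisfies almost surely a $\gamma$-H\"older condition for every $\gamma<\frac12$; hence for the associated measure $\mu=L(u(t,x_0),\cdot)$ we get $\mu((y-r,y+r))\leq \eta'(2r)^{\gamma}\leq c\,r^{\gamma}$. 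Frostman's Lemma \ref{Frostman} then yields $\mathcal{H}^{\gamma}(M_t(u(t,x_0)))>0$, so $\dim_H(M_t(u(t,x_0)))\geq\gamma$ for every $\gamma<\frac12$, and letting $\gamma\uparrow\frac12$ gives the lower bound for almost every $x_0$.

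For the upper bound $\dim_H(M_t(u(t,x_0)))\leq\frac12$ I would invoke the spatial regularity of the solution together with the joint continuity of its local time. By Theorem \ref{pricipal x} and the injection $\mathcal{B}^{1/2}_p\hookrightarrow\mathcal{H}^{1/2-1/p}$ (or directly from the upper bound in Lemma \ref{lemma1}), the sample path $x\mapsto u(t,x)$ satisfies a.s. a H\"older condition of every order $\gamma<\frac12$; and by Theorem \ref{joint continuity in x} it possesses a jointly continuous local time. The classical result \cite[Theorem 8.7.3.]{adler} then gives $\dim_H(M_t(\xi))\leq 1-\gamma$ for all $\xi$ simultaneously, and since this holds for every $\gamma<\frac12$, letting $\gamma\uparrow\frac12$ yields $\dim_H(M_t(u(t,x_0)))\leq\frac12$, now for all $x_0$.

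Combining the two bounds gives the asserted equality for almost every $x_0$, almost surely. Both ingredients are essentially in hand from Theorem \ref{joint continuity in x} and Lemma \ref{lemma1}, so no step presents a serious obstacle; the only point requiring care is the direction of the limits in the H\"older exponent, since the lower bound comes from Frostman with $s=\gamma\uparrow\frac12$ while the upper bound comes from the dimension estimate $1-\gamma\downarrow\frac12$, so that the two bounds meet exactly at $\frac12$, and one must check that the \emph{almost every} $x_0$ in the lower bound is compatible with the \emph{all} $x_0$ in the upper bound.
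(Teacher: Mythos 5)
Your proposal is correct and follows essentially the same route as the paper: the lower bound via Berman's Lemma 1.1, the H\"older estimate \eqref{holder conddition in y} for the local time measure, and Frostman's Lemma \ref{Frostman}; the upper bound via the spatial H\"older continuity of $u(t,\cdot)$, the joint continuity of the local time from Theorem \ref{joint continuity in x}, and \cite[Theorem 8.7.3.]{adler}. This is precisely the argument the paper invokes when it says ``proceeding in the same way as for $(u(t,x),\,t\in[0,T])$,'' mirroring Lemmas \ref{lem level set lower} and \ref{lem level set upper} with the exponent $3/4$ replaced by $1/2$.
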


\section*{Acknowledgement(s)}
The first author would like to warmly thank Professor M. Dozzi for his fruitful discussions on an earlier version of this article.

\end{document}